\def\dar[#1]{\ar@<2pt>[#1]\ar@<-2pt>[#1]}
\newcommand{\Map}{\mathrm{Map}}
\newcommand{\Hom}{\mathrm{Hom}}
\newcommand{\Tor}{\mathrm{Tor}}
\DeclareMathOperator{\hofib}{\mathrm{hofib}}
\DeclareMathOperator{\hocofib}{\mathrm{hocofib}}
\newcommand{\Mod}{\mathbf{Mod}}
\newcommand{\N}{\mathbb N}
\newcommand  {\salg}   {\mathbf{sAlg}_{k} }
\newcommand{\sset}{\mathbf{sSet}}
\theoremstyle{plain}
\newtheorem{thm}{Theorem}[section]
\newtheorem{prop}[thm]{Proposition}
\newtheorem{lem}[thm]{Lemma}
\newtheorem{cor}[thm]{Corollary}
\theoremstyle{definition}
\newtheorem{df}[thm]{Definition}
\newtheorem{rmk}[thm]{Remark}
\begin{document}

\title{\textbf{Infinitesimal and square-zero extensions of simplicial algebras} \\  \bigskip - Notes for students -}
\bigskip
\bigskip
\bigskip
\bigskip

\author{\textbf{Mauro Porta}\\
\small{Université Paris Diderot} \bigskip \\ 
\textbf{Gabriele Vezzosi}\\
\small{Institut de Mathématiques de Jussieu}}

\date{October 2013}

\maketitle

\tableofcontents

\section*{Introduction}

These notes were written to meet the requests of some students who pointed out that the exposition of the role of the cotangent complex in the Postnikov towers for simplicial commutative algebras in \cite{HAG-II} was too terse and needed some kind of unzipping.\\
We took also the opportunity to enlarge a little bit the context, by introducing square-zero extensions and their relation with infinitesimal extensions (i.e. those coming from derivations). The idea is that infinitesimal extensions are captured by the cotangent complex, that square-zero extensions are special infinitesimal extensions, and that the Postnikov tower of a simplicial commutative algebra is built out of square-zero extensions. We conclude the notes with two applications: we give connectivity estimates for the cotangent complex and we show how obstructions can be seen as deformations over simplicial rings.\\
All the material is well-known to experts but details might be useful to people meeting these topics for the first time. A similar path, in a broader and less elementary context, might be found in \cite[\S  8.3, \S8.4]{Lurie_Higher_algebra}.\\

\noindent \textbf{Notational remarks.} We denote by $\salg$ the model category of simplicial commutative $k$-algebras. All tensor products, unless differently stated, are implicitly derived.

\section{Infinitesimal extensions}

Infinitesimal extensions are defined by derived derivations:

\begin{df}\label{inf}
Let $A\to B$ be a cofibrant $A$-algebra, $M$ be a simplicial $B$-module and $\overline{d} \in \pi_0 (\mathrm{Map}_{A/\salg/B} (B, B\oplus M[1]))$ be a derived derivation from $B$ to $M[1]$, represented by a map $d \colon B \to B\oplus M[1]$ in $A / \salg/B$. If we denote by $\varphi_{d}: \mathbb{L}_{B/A}\to M[1]$ the map of $B$-modules corresponding to $d$, the \emph{infinitesimal extension} $\psi_{d}: B\oplus_{d}M \to B$ \emph{of $B$ by $M$ along $d$} is the map in $\mathrm{Ho}(A/\salg/B)$ defined by the following homotopy cartesian diagram in $A/\salg$
\[
\xymatrix{B\oplus_{d}M \ar[d]_-{\psi_{d}} \ar[r] & B \ar[d]^-{0} \\B \ar[r]_-{d} & B\oplus M[1] }
\]
where $0$ denotes the section corresponding to the trivial derived derivation $0 \colon \mathbb L_{B/A} \to M[1]$.
\end{df}

\noindent The appearance of $M$ (instead of any shift of it) in the notation $ B\oplus_{d}M$ calls for an explanation.

\begin{prop} \label{prop infinitesimal fiber}
If $\psi_{d}: B\oplus_{d}M \to B$ is an infinitesimal extension of $B$ by $M$ along $d$, then the homotopy fiber of $\psi_{d}$ at $0$ is isomorphic to $M$ in $\mathrm{Ho}(B\textrm{-}\Mod)$. 
\end{prop}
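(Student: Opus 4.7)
The plan is to compute $\mathrm{hofib}(\psi_{d}, 0)$ directly from the defining homotopy pullback square. The first move is to invoke the standard pasting lemma for homotopy pullbacks: for any homotopy pullback, the fibre of one projection at a point is canonically equivalent to the fibre of the opposite map at the image of that point. Applied to the square defining $B\oplus_{d}M$ with the basepoint $0\in B$, and using that $d$ is a morphism of simplicial $A$-algebras so that $d(0)=0$, this yields
$$\mathrm{hofib}(\psi_{d}, 0) \;\simeq\; \mathrm{hofib}\bigl(0\colon B \to B\oplus M[1],\, 0\bigr),$$
reducing the problem to identifying the homotopy fibre of the zero section $s\colon B\to B\oplus M[1]$ at the basepoint with $M$.

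The second step is to carry out this identification inside simplicial $B$-modules, where $B\oplus M[1]$ splits as a product $B \times M[1]$ of pointed objects and $s$ is identified with $b\mapsto (b,0)$. Using the compatibility of homotopy pullbacks with products and the contractibility of the based path space of $B$, one obtains
$$\mathrm{hofib}(s, 0) \;\simeq\; \{0\} \times^{h}_{M[1]} \{0\} \;\simeq\; \Omega\, M[1] \;\simeq\; M,$$
the last step being the standard loop--suspension identification in the category of simplicial $B$-modules.

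The main (and essentially only) subtlety will be to check that this chain of equivalences actually lives in $\mathrm{Ho}(B\textrm{-}\Mod)$ rather than merely in pointed simplicial sets. For this one must verify that the forgetful functor from $A/\salg/B$ to simplicial $B$-modules preserves the homotopy limits appearing in the argument, so that all the pullbacks above may be computed in $B$-modules, and that both the splitting $B\oplus M[1] \simeq B\times M[1]$ and the loop--suspension equivalence $\Omega\, M[1]\simeq M$ are $B$-linear. Once these natural $B$-module structures are tracked through each identification, the proposition follows; I expect no real obstacles beyond this bookkeeping.
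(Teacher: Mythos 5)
Your proof takes essentially the same route as the paper's: reduce to the homotopy fibre of the zero section via the pasting lemma for homotopy pullbacks (Proposition \ref{prop homotopy fibre}), identify that fibre with $\Omega(M[1])$ using the obvious pullback square $B \to B\oplus M[1] \to M[1]$, and conclude with the loop--suspension equivalence $\Omega\Sigma M\simeq M$ in simplicial modules (Corollary \ref{cor omega sigma}). The paper handles the passage from $A/\salg$ to modules implicitly, whereas you flag it explicitly as a check; but the underlying argument, including all three key steps, is the same.
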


\begin{proof}
Proposition \ref{prop homotopy fibre} shows that
\[
\hofib \psi_{d} \simeq \hofib (0 \colon B \to B \oplus M[1])
\]
(where the fibres are taken over $0$). In order to explicitly compute $\hofib(0)$ we observe that the square
\[
\xymatrix{
B \ar[r]^-0 \ar[d] & B \oplus M[1]\ar[d]^p \\ 0 \ar[r]& M[1]
}
\]
is homotopy cartesian: in fact, $p$ is a fibration (being surjective) and every object is fibrant, so that the statement follows from Corollary \ref{cor computing homotopy pullback} and from the fact that the previous diagram is obviously a strict pullback. As consequence, the outer rectangle in
\[
\xymatrix{
\hofib 0 \ar[r] \ar[d] & 0 \ar[d] \\ B \ar[d] \ar[r]^-0 & B \oplus M[1] \ar[d] \\ 0 \ar[r] & M[1]
}
\]
is a homotopy pullback, so that
\[
\hofib \psi_d \simeq \hofib 0 \simeq \Omega(M[1])
\]
Now, $M[1] = M \otimes_A A[S^1]$ is the suspension of $M$ and $\Omega \Sigma(M) \simeq M$ by Corollary \ref{cor omega sigma}.
\end{proof}

\section{Square-zero extensions}

\begin{df} \label{sqzero}
Let $n\geq 0$. Let $A\in \salg$, $B_1$ a cofibrant $A$-algebra, and $I\subseteq \pi_{n}(B_1)$ a sub-$\pi_0(B_1)$-module. A morphism of cofibrant $A$-algebras $\varphi \colon B_1 \to B_0$ in $A/\salg$ is a \emph{$A$-square-zero extension by $I[n]$} if
the following conditions are met
\begin{enumerate}
\item $B_{0}$ and $B_1$ are $n$-truncated;
\item $\varphi$ is an $(n-1)$-equivalence of $A$-algebras;
\item For any $n$-truncated $A$-algebra $E$, the following diagram is homotopy cartesian
\[
\xymatrix{
\Map_{A/\salg}(B_{0},E) \ar[d] \ar[rr]^-{\Map(\varphi, E)} & & \Map_{A/\salg}(B_{1},E) \ar[d] \\
[B_{1},E]_{0, I} \ar[rr] & & [B_1, E]
}
\]
where $[B_1, E]$ denotes the set of homotopy classes of maps $B_1 \to E$, and $[B_{1},E]_{0,I}$ the subset of $[B_1,E]$ consisting of those $[f]$ such that $\pi_n (f)$ is zero on $I$;
\item The canonical map $\pi_{n}(B_{1}) \rightarrow \pi_{n}(B_0)$ is surjective with kernel $I$, i.e. $\pi_{n}(B_1)/I \simeq \pi_n(B_0)$;
\item if $n=0$ then $I^2=0$ (classical case).
\end{enumerate}
\end{df}

\begin{rmk} \label{rmk square zero extension}
\begin{enumerate}
\item Equivalently, we can define $[B_1, E]_{0,I}$ as the following (homotopy) pullback in $\sset$:
\[
\xymatrix{
[B_{1},E]_{0, I} \ar[rr] \ar[d] & & [B_{1},E] \ar[d] \\ \Hom_{\pi_{0}(A)\textrm{-}\Mod}(\pi_n(B_1)/I, \pi_n(E)) \ar[rr]_-{\textrm{can}} & & \Hom_{\pi_{0}(A)\textrm{-}\Mod}(\pi_n(B_1), \pi_n(E))
}
\]
In fact, inspection reveals that the above diagram is a strict pullback. It is a homotopy pullback because every object there is discrete, hence fibrant and, as consequence, the maps are fibrations.

\item For $n=0$, and $A=k$ we get back the classical definition of square-zero extension of commutative $k$-algebras.

\item If $B_1 \rightarrow B_0$ is an $A-$square-zero extension by $I[n]$, then $I$ is canonically a $\pi_{0}(B_0)$-module. This follows from $\pi_{0}(B_0)\simeq \pi_{0}(B_1)$, if $n>0$, and is classical if $n=0$ (since $I^2=0$).
\end{enumerate}
\end{rmk}

\begin{lem} \label{lemma eilenberg maclane}
If $\varphi \colon B_1 \to B_0$ is a square-zero extension by $I[n]$ in $A/\salg$, then $\hofib \varphi$ is a $K(I,n)$-space.
\end{lem}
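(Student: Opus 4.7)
The plan is to compute $\pi_\ast(\hofib\varphi)$ directly from the long exact sequence associated with the homotopy fibre sequence
\[
\hofib\varphi \longrightarrow B_1 \stackrel{\varphi}{\longrightarrow} B_0
\]
(with fibre taken over $0 \in B_0$), and then to identify any simplicial abelian group whose homotopy is concentrated in a single degree with the corresponding Eilenberg--Mac~Lane object.

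The first step is to read off from conditions (1), (2) and (4) of Definition \ref{sqzero} that $\pi_i(\varphi)$ is an isomorphism for $i\le n-1$, is surjective with kernel $I$ for $i=n$, and is the zero map between zero groups for $i>n$. Plugging this into the long exact sequence
\[
\cdots \to \pi_{i+1}(B_0)\to \pi_i(\hofib\varphi)\to \pi_i(B_1)\to \pi_i(B_0)\to \pi_{i-1}(\hofib\varphi)\to\cdots
\]
one obtains $\pi_i(\hofib\varphi)=0$ for $i>n$ by truncation, while the four-term piece
\[
0=\pi_{n+1}(B_0)\to \pi_n(\hofib\varphi)\to \pi_n(B_1) \twoheadrightarrow \pi_n(B_0)
\]
identifies $\pi_n(\hofib\varphi)$ with $\ker\pi_n(\varphi)=I$. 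The subsequent piece
\[
\pi_n(B_0)\to \pi_{n-1}(\hofib\varphi)\to \pi_{n-1}(B_1)\xrightarrow{\cong}\pi_{n-1}(B_0),
\]
together with the isomorphisms $\pi_i(\varphi)$ for $i\le n-1$, then kill $\pi_i(\hofib\varphi)$ in every remaining degree.

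The second step upgrades this computation to a weak equivalence $\hofib\varphi\simeq K(I,n)$. Since $\varphi$ is in particular a morphism of simplicial $A$-modules (via the forgetful functor), $\hofib\varphi$ carries a natural structure of simplicial abelian group; and any simplicial abelian group whose homotopy is concentrated in degree $n$ with value $I$ is weakly equivalent to $K(I,n)$---via Dold--Kan this reduces to the statement that a chain complex with homology $I$ in degree $n$ is quasi-isomorphic to $I[n]$.

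The only real subtlety I anticipate is matching the convention for ``$(n-1)$-equivalence'' with condition (4) so that together they produce exactly the ``isomorphism below $n$, surjection with kernel $I$ at $n$'' picture used above. The classical case $n=0$ is handled separately: there $B_0$ and $B_1$ are discrete, $I^2=0$, and $\hofib\varphi$ reduces to the set-theoretic kernel $I$, which is $K(I,0)$.
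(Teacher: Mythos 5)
Your argument is essentially the paper's proof: both compute $\pi_*(\hofib\varphi)$ from the long exact sequence of the fibre sequence $\hofib\varphi \to B_1 \to B_0$, using $n$-truncation for degrees above $n$, the $(n-1)$-equivalence together with surjectivity of $\pi_n(\varphi)$ for degrees strictly below $n$, and axiom (4) to identify $\pi_n(\hofib\varphi)$ with $I$. Your second step (invoking the simplicial module structure and Dold--Kan to pass from the homotopy-group computation to a weak equivalence with $K(I,n)$) is correct but is an added clarification; the paper treats the homotopy-group computation as already establishing that $\hofib\varphi$ is a $K(I,n)$-space.
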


\begin{proof}
We have by definition a fibre sequence
\[
\hofib \varphi \to B_1 \xrightarrow{\varphi} B_0
\]
in $A\textrm{-}\Mod$ (and therefore a fibre sequence of pointed simplicial sets). The long exact sequence of homotopy groups shows then that
\[
\pi_m(\hofib \varphi) = 0
\]
if $m > n$ or $m < n-1$. Moreover, for $m = n - 1$ we have
\[
\pi_n(B_1) \twoheadrightarrow \pi_n(B_0) \to \pi_{n-1}(\hofib \varphi) \to \pi_{n-1}(B_1) \stackrel{\sim}{\to} \pi_{n-1}(B_0)
\]
so that $\pi_{n-1}(\hofib \varphi) = 0$. Finally, we have a short exact sequence
\[
0 \to \pi_n(\hofib \varphi) \to \pi_n(B_1) \to \pi_n(B_0) \to 0
\]
so that axiom 4. readily implies that
\[
\pi_n(\hofib \varphi) \simeq I
\]
completing the proof.
\end{proof}

\begin{prop} \label{unique}
Let $n\geq 0$, $A\in \salg$, $\varphi:B_1 \to B_0$ and $\varphi':B_1 \to B_0'$ in $A/\salg$ two $A$-square-zero extensions by $I[n]$ ($I\subseteq \pi_{n}(B_1)$ a fixed sub-$\pi_0(B_1)$-module). Then there is an isomorphism $B_0 \simeq B_0'$ in $\mathrm{Ho}(B_1/\salg)$.
\end{prop}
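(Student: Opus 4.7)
\medskip

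\noindent \textbf{Proof plan.} The strategy is to exploit the universal property recorded as condition 3 of Definition \ref{sqzero} in order to construct, in $\mathrm{Ho}(B_1/\salg)$, a comparison morphism $\tilde\varphi: B_0 \to B_0'$, and then to show directly that $\tilde\varphi$ is a weak equivalence by inspection on homotopy groups. Once this is done, being a weak equivalence of underlying simplicial algebras is equivalent to being an isomorphism in the homotopy category of the undercategory $B_1/\salg$, so the conclusion follows.

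For the construction, I instantiate the homotopy cartesian square of condition 3 associated with $\varphi$ at the test object $E = B_0'$, which is $n$-truncated by condition 1. The map $\varphi' : B_1 \to B_0'$ provides a point of $\Map_{A/\salg}(B_1, B_0')$, and its homotopy class lies in $[B_1, B_0']_{0,I}$, because condition 4 applied to $\varphi'$ says that $\pi_n(\varphi')$ is surjective with kernel exactly $I$ and hence in particular vanishes on $I$. Together with the tautological path witnessing the equality of their images in $[B_1, B_0']$, this defines a point of the homotopy pullback, i.e.\ a homotopy class $[\tilde\varphi] \in \pi_0 \Map_{A/\salg}(B_0, B_0')$ together with a homotopy $\tilde\varphi \circ \varphi \simeq \varphi'$; this is exactly a morphism in $\mathrm{Ho}(B_1/\salg)$. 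Swapping the roles of $\varphi$ and $\varphi'$ produces a map in the reverse direction, though for the proof it is enough to have $\tilde\varphi$.

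To verify that $\tilde\varphi$ is a weak equivalence, I check on $\pi_m$ for each $m$. For $m > n$ both $\pi_m(B_0)$ and $\pi_m(B_0')$ vanish by condition 1. For $0 \le m < n$ the relation $\pi_m(\tilde\varphi) \circ \pi_m(\varphi) = \pi_m(\varphi')$ combined with condition 2 (both $\varphi$ and $\varphi'$ are $(n-1)$-equivalences, so they induce isomorphisms on $\pi_m$) forces $\pi_m(\tilde\varphi)$ to be an isomorphism. The most delicate case is $m = n$: here condition 4 gives surjections $\pi_n(B_1) \twoheadrightarrow \pi_n(B_0)$ and $\pi_n(B_1) \twoheadrightarrow \pi_n(B_0')$ both with kernel exactly $I$, so the induced factorization $\pi_n(\tilde\varphi)$ is forced to be an isomorphism between $\pi_n(B_1)/I$ and itself. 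The main obstacle I anticipate is bookkeeping around condition 3: one must carefully unpack the homotopy pullback so that the output is not just a homotopy class $[\tilde\varphi]$ in $[B_0, B_0']$ but a homotopy class equipped with a chosen identification of $\tilde\varphi \circ \varphi$ with $\varphi'$, so that it descends to an honest morphism in $B_1/\salg$ and not merely in $A/\salg$.
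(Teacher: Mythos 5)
Your proposal is correct, and the construction of the comparison map is the same as in the paper: both apply condition~3 at the test object $E = B_0'$ and extract a map $\tilde\varphi \colon B_0 \to B_0'$ from the homotopy pullback, using that $[\varphi']$ lands in $[B_1,B_0']_{0,I}$ by condition~4. You are also right to flag the "bookkeeping" needed to upgrade $\tilde\varphi$ from a map in $\mathrm{Ho}(A/\salg)$ with $[\tilde\varphi\circ\varphi]=[\varphi']$ to an actual morphism in $\mathrm{Ho}(B_1/\salg)$ — the paper glosses over exactly the same point, so you are in good company (the standard fix: since $B_1$ is cofibrant, one may take $\varphi$ to be a cofibration and use the homotopy extension property to replace $\tilde\varphi$ by a strictly $B_1$-equivariant representative in its homotopy class).

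Where you genuinely diverge from the paper is the verification that $\tilde\varphi$ is an equivalence. The paper passes to the left Bousfield localization $(A/\salg)_{\le n}$, argues that condition~3 makes $u^*$ an $S$-local equivalence, and concludes by the $S$-local Whitehead theorem (citing Hirschhorn) because $B_0, B_0'$ are $S$-local. You instead check $\pi_m(\tilde\varphi)$ directly for each $m$: trivially for $m > n$ via condition~1, via condition~2 for $m < n$, and via condition~4 for $m = n$ (the factorization of $\pi_n(\varphi') = \pi_n(\tilde\varphi)\circ\pi_n(\varphi)$ through the common quotient $\pi_n(B_1)/I$). Your route is strictly more elementary — it uses nothing beyond the $\pi_*$-characterization of weak equivalences in $\salg$ and avoids the localization machinery entirely — at the cost of the small case analysis. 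The paper's route is more conceptual (it exhibits $B_0$ and $B_0'$ as corepresenting the same functor on $n$-truncated objects), but it must then work to identify the abstract isomorphism with the concrete map $u$, which it does only implicitly. Both verifications are correct; yours is a worthwhile simplification.
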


\begin{proof}
The mapping space axiom 3. tells us that the simplicial sets $\Map_{A/\salg}(B_0, E)$ and $\Map_{A/\salg}(B_0', E)$ are isomorphic in $\mathrm{Ho}(\mathbf{sSet})$, for any $n$-truncated $E\in A/\salg$. In particular, by taking $E= B_0'$, we get a map $u \colon B_0 \rightarrow B_0'$. Denote as $(A/\salg)_{\leq n}$ the left Bousfield localization of $A/\salg$ with respect to the single map $S:={S^{n+1}\otimes A[T]\to A[T]}$, and denote the left Quillen adjoint by $\tau_{\leq n}:A/\salg \rightarrow (A/\salg)_{\leq n}$. The fibrant objects in $(A/\salg)_{\leq n}$ are the $S$-local objects, i.e. $n$-truncated simplicial $A$-algebras. The homotopy category of $(A/\salg)_{\leq n}$ is identified as the full subcategory of $\mathrm{Ho}(A/\salg)$ consisting of $n$-truncated objects.
Now, the mapping space axiom (3) implies that, for any $S$-local object $E\in \salg$, the map
\[
u^*: \Map_{A/\salg}(B_0, E) \to \Map_{A/\salg}(B_0', E)
\]
is an isomorphism in $\mathrm{Ho}(\mathbf{sSet})$, i.e. ( \cite[Proposition 3.5.3]{Hirschhorn_Model_2003}) $u \colon B_0 \rightarrow B_0'$ is an $S$-local equivalence. But both $B_0$ and $B_0'$ are $S$-local objects (being $n$-truncated), so we conclude that in fact $u$ is a weak equivalence in $A/\salg$ (an $S$-local equivalence between $S$-local objects is a weak equivalence: $S$-local Whitehead Theorem Theorem 3.2.13 loc.\ cit.). How do we climb up to an equivalence of $B_{1}/\salg$? Simply observe that the isomorphism $\Map_{A/\salg}(B_0, E) \simeq \Map_{A/\salg}(B_0', E)$ (in $\mathrm{Ho}(\mathbf{sSet})$), from which we deduced the map $u$, in fact commutes (up to homotopy) with the maps
\[
\xymatrix{
\Map_{A/\salg}(B_{0},E)  \ar[rr]^-{\Map(\varphi, E)} & & \Map_{A/\salg}(B_{1},E)
}
\]
\[
\xymatrix{
\Map_{A/\salg}(B_{0}',E)  \ar[rr]^-{\Map(\varphi', E)} & & \Map_{A/\salg}(B_{1},E)
}
\]
Therefore, we may choose $u \colon B_0 \rightarrow B_0' $ as a map in $\mathrm{Ho}(B_1/\salg)$.
\end{proof}

Let $B_1 \to B_0$ be a square-zero extension by $I[n]$. We saw in Lemma \ref{lemma eilenberg maclane} and in Proposition \ref{unique} that the sub-$\pi_0(B_0)$-module controls every information about the extension; in particular, the homotopy fiber is determined and there are no two different square-zero extensions associated to the same sub-$\pi_0(B_0)$-module. We are going now to show that every sub-$\pi_0(B_0)$-module determines a square-zero extension:

\begin{prop}\label{existence}
Let $n \geq 0$. Given a cofibrant and $n$-truncated $B_1 \in A/\salg$, and a sub-$\pi_0(B_1)$-module $I\subseteq \pi_{n}(B_1)$ (such that $I^2=0$ if $n=0$), there exists a square zero extension $B_1 \to B_0$ by $I[n]$. Moreover any other such extension $B_1 \to B_{0}'$ is isomorphic to $B_{1}\to B_{0}$ in $\mathrm{Ho}(B_{1}/\salg)$.
\end{prop}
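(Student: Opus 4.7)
The uniqueness assertion is already Proposition \ref{unique}, so only the existence needs to be shown. The plan is to kill $I$ by a homotopy pushout in $A/\salg$ and then apply the $n$-truncation functor $\tau_{\leq n}$ from the proof of Proposition \ref{unique}. Fix a set of generators $\{x_\alpha\}_\alpha$ of $I$ as a $\pi_0(B_1)$-module. Each $x_\alpha \in \pi_n(B_1)$ is represented by a morphism $f_\alpha\colon G_n \to B_1$ in $A/\salg$, where $G_n := S^n \otimes A[T]$ is the simplicial-set-tensor of the free algebra $A[T]$ with the $n$-sphere, whose defining property $\Map_{A/\salg}(G_n, C) \simeq \Omega^n C$ (based at the augmentation) yields $\pi_0\Map_{A/\salg}(G_n, C) \cong \pi_n(C)$ for every $C \in A/\salg$. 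Assemble the $f_\alpha$ into $f\colon F := \coprod_\alpha G_n \to B_1$, form the homotopy pushout in $A/\salg$
\[
\xymatrix{F \ar[r]^-f \ar[d]_-{\epsilon} & B_1 \ar[d] \\ A \ar[r] & \widetilde B_0 }
\]
with $\epsilon$ the coproduct of the augmentations killing each generator, and set $B_0 := \tau_{\leq n}\widetilde B_0$, with $\varphi\colon B_1 \to B_0$ the induced morphism.

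The homotopy groups of $\widetilde B_0$ are read off from the pushout: since $G_n$ has only $\pi_0$ and $\pi_n$ nontrivial and $\epsilon$ exactly annihilates the degree-$n$ generator, a direct analysis of the base-change $\widetilde B_0 \simeq B_1\otimes^{\mathbb L}_F A$ yields $\pi_i(\widetilde B_0) \cong \pi_i(B_1)$ for $i < n$ and a surjection $\pi_n(B_1)\twoheadrightarrow \pi_n(\widetilde B_0)$ with kernel precisely $I$. Truncating then kills the possibly nonzero higher homotopy of $\widetilde B_0$ without affecting $\pi_i$ for $i \leq n$, so $B_0$ is $n$-truncated with $\pi_i(B_0) = \pi_i(B_1)$ for $i<n$ and $\pi_n(B_0) = \pi_n(B_1)/I$. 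This establishes axioms (1), (2) and (4) of Definition \ref{sqzero}, and axiom (5) is part of the hypothesis.

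The mapping space axiom (3) is the key step. For any $n$-truncated $E$, the adjunction defining $\tau_{\leq n}$ together with the universal property of the pushout give
\[
\Map_{A/\salg}(B_0, E) \;\simeq\; \Map_{A/\salg}(\widetilde B_0, E) \;\simeq\; \hofib_0\bigl(\Map_{A/\salg}(B_1, E) \longrightarrow \Map_{A/\salg}(F, E)\bigr).
\]
Because $E$ is $n$-truncated, $\Omega^n E$ is discrete with $\pi_0 = \pi_n(E)$, so $\Map_{A/\salg}(F, E) = \prod_\alpha \Omega^n E$ is itself a discrete set and the connecting map sends $[g] \in [B_1, E]$ to the tuple $(\pi_n(g)(x_\alpha))_\alpha$. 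Its homotopy fibre over zero is thus the union of those components of $\Map_{A/\salg}(B_1, E)$ on which $\pi_n(g)$ annihilates every $x_\alpha$, equivalently all of $I$ by generation; comparing with Remark \ref{rmk square zero extension}(1) identifies this union with the homotopy pullback prescribed by axiom (3).

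The main obstacle is precisely this mapping-space identification: one has to verify cleanly that $\Map_{A/\salg}(G_n, E) \simeq \Omega^n E$ is discrete on $n$-truncated $E$ and that the induced map $\Map(B_1, E)\to \Map(F, E)$ really is evaluation against the chosen representatives of $I$ at the level of $\pi_0$. Everything else follows formally from the homotopy pushout construction and the standard properties of the truncation functor.
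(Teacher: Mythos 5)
Your overall strategy matches the paper's (kill $I$ via a homotopy pushout against a free object with the zero augmentation, then apply $\tau_{\le n}$), but there is a genuine error at the step you yourself flag as ``the main obstacle''. The identification $\Map_{A/\salg}(S^n \otimes A[T], E) \simeq \Omega^n E$ is false. Since $A/\salg$ is not a pointed model category, the simplicial tensor with $S^n$ is the \emph{unbased} one, and the free--forgetful adjunction gives $\Map_{A/\salg}(S^n \otimes A[T], E) \simeq \Map_{\sset}(S^n, \mathcal U E)$, the free mapping space. Because $\mathcal U E$ is a simplicial $k$-module (a generalized Eilenberg--MacLane space), and $E$ is $n$-truncated, this splits as $\mathcal U E \times \Omega^n E$: the second factor is discrete, but the first is not. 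Consequently $\Map_{A/\salg}(F, E)$ is not a set, the homotopy fibre of $\Map(B_1,E) \to \Map(F,E)$ over the zero map is not a union of $\pi_0$-components of $\Map(B_1,E)$, and the comparison with the pullback of Remark \ref{rmk square zero extension}(1) does not go through.

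The same extra $0$-cell of $S^n$ also poisons your homotopy-group computation: $\pi_*(S^n \otimes A[T]) = \pi_*(A)[T, X]$ with $|T| = 0$ and $|X| = n$, and killing $T$ in the pushout creates spurious Koszul classes, e.g.\ a copy of $\pi_q(B_1)$ in $\pi_{q+1}(\widetilde B_0)$ for each $0 \le q < n$. So $B_1 \to \widetilde B_0$ is not even an $(n-1)$-equivalence; ``a direct analysis of the base-change'' does not yield what you claim. The fix that the paper adopts is to push out against $\mathrm{Sym}_A(I[n])$ with $I[n]$ the \emph{reduced} $n$-fold shift of the $A$-module $I$, so that $\pi_*(\mathrm{Sym}_A(I[n]))$ is genuinely concentrated in degrees $0$ and $\ge n$. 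Then the relevant mapping space passes through the \emph{pointed} model category $A\textrm{-}\Mod$ via
\[
\Map_{A/\salg}(\mathrm{Sym}_A(I[n]), E) \simeq \Map_{A\textrm{-}\Mod}(I[n], E),
\]
which for $n$-truncated $E$ really is discrete with $\pi_0 \simeq \Hom_{\pi_0 A}(I, \pi_n E)$; this is what makes the bottom row of the paper's ladder a square of sets. If you insist on presenting $I$ by chosen generators, the object to use per generator is $\mathrm{Sym}_A(A[n])$ with $A[n]$ reduced, not $S^n \otimes A[T]$; and even with that fix, the computation of $\pi_*(\widetilde B_0)$ should be written out via the Quillen spectral sequence \cite[Theorem II.6.b]{Quillen_Homotopical_1967} as in the paper, since the identification $\pi_n(\widetilde B_0) \simeq \pi_n(B_1)/I$ relies on the careful degree count there and is not formal.
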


\begin{proof}
The uniqueness statement is Proposition \ref{unique}, so that we are left to prove the existence.

\noindent The idea of the proof is to construct $B_0$ as ``$B_1 /I$'' (i.e. to kill $I$ inside $B_1$) and then to take the $n$-truncation as an $A$-algebra. To begin with, let us consider $I$ as an $A$-module (via $A\to \pi_0(A) \to \pi_0 (B_1)$); the category $A \textrm{-} \Mod$ being monoidal model we have a canonical identification
\[
\Hom_{\mathrm{Ho}(A\textrm{-}\Mod)}(I, \pi_n(B_1)) \simeq \Hom_{\mathrm{Ho}(A \textrm{-} \Mod)}(I[n], B_1)
\]
so that the inclusion $I \subseteq \pi_n(B_1)$ induces a map of $A$-modules $I[n]\to B_1$ (because in $A \textrm{-} \Mod$ every object is fibrant, hence maps in the homotopy category can be represented in $A \textrm{-} \Mod$). At this point, we obtain by adjuntion an induced map of $A$-algebras
\[
\mathrm{Sym}_{A}(I[n]) \to B_1
\]
Define a new object $\widetilde{B_0}$ via the following pushout square in $A/\salg$:
\[
\xymatrix{
\mathrm{Sym}_{A}(I[n])  \ar[d] \ar[r]^-{0} & A \ar[d] \\
B_1 \ar[r] & \widetilde{B_0}
}
\]
where the map $0$ is induced by the zero map of $A$-modules $I[n] \to A$. Finally, introduce $B_0 := \tau_{\leq n} \widetilde{B_0}$. $B_0$ comes equipped with a canonical map
\[
\varphi \colon B_1 \to \widetilde{B_0} \to \tau_{\leq n} \widetilde{B_0}=B_0
\]
We claim that $\varphi$ is the square-zero extension by $I[n]$ we were looking for. Let us check that the conditions of Definition \ref{sqzero}:
\begin{enumerate}
\item $B_1$ is $n$-truncated by hypothesis, while $B_0$ is $n$-truncated by construction;
\item in order to show that $\varphi$ is an $(n-1)$-equivalence of $A$-algebras, and that the canonical map $\pi_{n}(B_{0}) \rightarrow \pi_{n}(B_1)$ induces an isomorphism $\pi_{n}(B_1)/I \simeq \pi_n(B_0)$, we use the spectral sequence of \cite[Theorem II.6.b]{Quillen_Homotopical_1967}. Set first of all $R_* :=\pi_* (\mathrm{Sym}_{A}(I[n]))$, so that the spectral sequence reads off as:
\[
E_{pq}^2= \Tor_{p}^{R_{*}}(\pi_*B_1,\pi_* A)_q \Rightarrow \pi_{p+q}(\widetilde{B_0})
\]
Let $C_{\bullet *}\to \pi_{*}B_1$ be a flat resolution of $\pi_*B_1$ as a graded$_{*}$ $R_*$-module, so that
\[
\textrm{Tor}_{p}^{R_{*}}(\pi_*B_1,\pi_* A)_q = H^p((C_{\bullet *}\otimes_{R_*} \pi_* A)_{q})
\]
Let us compute the degree $q$ part of $C_{\bullet *} \otimes_{R_*} \pi_* A$:
\[
(C_{\bullet *}\otimes_{R_*} \pi_* A)_{q} = \{ x_{ij}\otimes y_k \mid x_{ij}\in C_{ij}, y_k\in \pi_k A, \, j+k=q\}
\]
for $q\leq n$.
\begin{itemize}
\item If $q < n$, then $k < n$ and there are elements $\widetilde{y}_k \in R_k \simeq \pi_k A$ mapping to $y_k$, so that
\[
\{ x_{ij}\otimes y_k \mid x_{ij}\in C_{ij}, \: y_k\in \pi_k A, \: j+k=q \} = \{\widetilde{y}_k x_{ij}\otimes 1 \mid x_{ij}\in C_{ij}, \: \tilde{y_k}\in R_k , \: j+k=q\}
\]
and therefore
\[
(C_{\bullet *}\otimes_{R_*} \pi_* A)_{q}\simeq C_{\bullet q}, \quad \textrm{for} \: 0\leq q<n
\]

\item If $q=n$, for $j > 0$ (hence $k<n$) we still have 
$$ x_{ij}\otimes y_k = \tilde{y}_{k} x_{i,0}\otimes 1 $$
while for $j=0$, since $R_n\simeq \pi_n (A)\oplus I$, we get instead $$ x_{i0}\otimes y_n = (y_n,0)\cdot x_{i,0}\otimes 1 = (y_n,\xi)\cdot x_{i0}\otimes 1$$ for any $\xi \in I$ (and $x_{i0}\in C_{i0}$, $y_n\in \pi_{n}A$). Therefore $$(C_{\bullet *}\otimes_{R_*} \pi_* A)_{n}\simeq C_{\bullet n}/I\cdot C_{\bullet 0},$$ where $I\cdot C_{\bullet 0}:=\{ (0,\xi)\cdot x_{\bullet 0} \, | \, \xi \in I \subset R_n,\, x_{\bullet 0}\in C_{\bullet 0}\}$.
\end{itemize}
Therefore
\[
\textrm{Tor}_{p}^{R_{*}}(\pi_*B_1,\pi_* A)_q = \begin{cases}
H^p(C_{\bullet q}) = \delta_{p0}\cdot \pi_{q}B_1 & \text{if } 0 \le q < n \\
H^p(C_{\bullet n}/I\cdot C_{\bullet 0}) & \text{if } q = n
\end{cases}
\]
Let us compute $H^0(C_{\bullet n} / I \cdot C_{\bullet 0})$. Introduce first of all the ideal
\[
J := I \oplus \bigoplus_{q > n} R_q
\]
so that, given any graded $R_*$-module $M_*$ we have
\[
M_n / I \cdot M_0 \simeq (M_* / J \cdot M_*)_n \simeq (M_* \otimes_{R_*} R_* / J)_n
\]
and now observe that we are given an exact sequence
\[
C_{1,*} \to C_{0,*} \to \pi_*(B_1) \to 0
\]
Tensoring with $R_* / J$ preserves the right exactness, and taking the degree $n$ part is obviously an exact functor, so that we obtain an exact sequence
\[
C_{1,n} / I \cdot C_{1,0} \to C_{0,n} / I \cdot C_{0,0} \to \pi_n(C) / I \to 0
\]
which readily implies that
\[
H^0(C_{\bullet n}/I\cdot C_{\bullet 0})\simeq \pi_{n}(B_1)/I
\]
so the $E^2$ page of our homological spectral sequence is first quadrant and we can draw it as properties 
\[
\xymatrix{
& & & & & &   \\
q= n & \pi_n(B_1)/I \ar[rrd] & \bullet & \bullet & \bullet & \bullet  & \bullet    \\ q=n-1 & \pi_{n-1}(B_1)/I \ar[rrd] & 0 & 0 & 0 & 0 & 0  \\q= n-2 & \pi_{n-2}(B_1)/I \ar[rrd]& 0 & 0 & 0 & 0 & 0   \\ \vdots & & & & & &   \\ \vdots & & & & & &  \\    q= 0 & \pi_0(B_1)/I & 0 & 0 & 0 & 0 & 0    \\ & p=0 & p=1 & p=2 & p=3 & p=4 & \ldots
}
\]
Thus $E^{\infty}_{pq}= E_{pq}^2$ for $0\leq p+q \leq n$, so that
\[
\pi_{i}(\widetilde{B_1}) = \begin{cases}
\pi_i(B_1) & \text{if } 0 \le i < n \\
\pi_n(B_1) / I & \text{if } i = n
\end{cases}
\]

\item For any $A$-algebra $E$, the following diagram consists of homotopy cartesian squares 
\[
\xymatrix{
\Map_{A/\salg}(\widetilde{B_{0}},E) \ar[d] \ar[rr]^-{\Map(\varphi, E)} & & \Map_{A/\salg}(B_{1},E) \ar[d] \\
\Map_{A/\salg}(A,E) \ar[rr] \ar[d] & & \Map_{A/\salg}(\mathrm{Sym}_{A}(I[n]),E) \ar[d] \\
[A,E] \simeq [\tau_{\leq n} A \oplus I[n],E]_{\textrm{0, I[n]}} \ar[rr] & & [\tau_{\leq n} A \oplus I[n],E] \simeq [A\oplus I[n], E]
}
\]
(for the top square we use the homotopy pushout definition of $\widetilde{B_0}$ and the fact that $\Map_{A/\salg}(\widetilde{B_0}, E) \simeq \Map_{A/\salg}(B_0:=\tau_{\leq n}\widetilde{B_0}, E)$ since $E$ is $n$-truncated; for the bottom square we use that $\tau_{\leq n}(\mathrm{Sym}_{A}(I[n]))\simeq \tau_{\leq n}(A\oplus I[n]) \simeq \tau_{\leq n}(A)\oplus I[n]$, and again the hypothesis that $E$ is $n$-truncated). To conclude it just remains to remark that the diagram of sets
\[
\xymatrix{
[B_1,E]_{0, I} \ar[d] \ar[r] & [B_1, E] \ar[d] \\
[A,E] \ar[r] & [A\oplus I[n], E]
}
\]
is cartesian.
\end{enumerate}
\end{proof}

\noindent The following result will be useful later

\begin{lem}\label{lem} Let $n\geq 0$, and $\varphi:B_1 \to B_0$ in $A/\salg$ a square-zero extension by $I[n]$ ($I\subseteq \pi_{n}(B_1)$ a fixed sub-$\pi_0(B_1)$-module). Let $\widetilde{B_{1}}$ be defined by the following  pushout square in $A/\salg$ $$\xymatrix{\mathrm{Sym}_{A}(I[n])  \ar[d] \ar[r]^-{0} & A \ar[d] \\ B_1 \ar[r] & \widetilde{B_0}  }$$ where the map $0$ is induced by the zero map of ($\pi_{0}A$ hence) $A$-modules $I[n]\to A$. Then 
\begin{itemize}
\item there is a canonical isomorphism $B_0 \simeq \tau_{\leq n}\widetilde{B_0}$  in $\mathrm{Ho}(B_{1}/\salg)$;
\item there is a canonical isomorphism $\widetilde{B_0}\otimes_{B_1}B_0 \simeq \mathrm{Sym}_{B_0} I[n+1]$ in $\mathrm{Ho}(B_0/\salg)$.
\end{itemize}
\end{lem}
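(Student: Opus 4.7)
The first bullet requires almost no new work. The pushout diagram defining $\widetilde{B_0}$ in the lemma statement is precisely the one used in the proof of Proposition \ref{existence}, where it was verified that $\tau_{\leq n}\widetilde{B_0}$ is an $A$-square-zero extension of $B_1$ by $I[n]$. Since $\varphi\colon B_1\to B_0$ is also such an extension by hypothesis, the uniqueness statement (Proposition \ref{unique}) yields a canonical isomorphism $B_0\simeq\tau_{\leq n}\widetilde{B_0}$ in $\mathrm{Ho}(B_1/\salg)$.

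For the second bullet, the plan is to compute $\widetilde{B_0}\otimes_{B_1}B_0$ by repeatedly manipulating the defining pushout and then invoking the ``suspension'' identity $A\otimes_{\mathrm{Sym}_A(M)}A\simeq\mathrm{Sym}_A(M[1])$ (with both maps the augmentation), which itself follows from the fact that $\mathrm{Sym}_A\colon A\textrm{-}\Mod\to A/\salg$ is left Quillen and therefore preserves the homotopy pushout $0\leftarrow M\to 0$ defining $M[1]$. By associativity of homotopy pushouts in $\salg$, the defining square for $\widetilde{B_0}$ gives
\[
\widetilde{B_0}\otimes_{B_1}B_0 \;\simeq\; A \otimes_{\mathrm{Sym}_A(I[n])} B_0,
\]
where the right-hand structure map is the composite $\mathrm{Sym}_A(I[n])\to B_1\xrightarrow{\varphi}B_0$. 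The key step is to verify that this composite is homotopic to the augmentation $\mathrm{Sym}_A(I[n])\xrightarrow{0}A\to B_0$. By the free-forgetful adjunction, this reduces to showing that the underlying $A$-module map $I[n]\to B_0$ is null in $\mathrm{Ho}(A\textrm{-}\Mod)$: on $\pi_n$ it factors as $I\hookrightarrow\pi_n(B_1)\twoheadrightarrow\pi_n(B_1)/I=\pi_n(B_0)$, which vanishes by axiom~(4) of Definition~\ref{sqzero}, and $B_0$ being $n$-truncated together with the adjunction identification already used in the proof of Proposition~\ref{existence} ensures that such maps are detected on $\pi_n$.

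Once this factorization is secured, a second application of associativity, followed by the suspension identity applied with $M=I[n]$ and the base-change formula $\mathrm{Sym}_A(-)\otimes_AB_0\simeq\mathrm{Sym}_{B_0}(-\otimes_AB_0)$, gives
\[
A\otimes_{\mathrm{Sym}_A(I[n])}B_0 \;\simeq\; \bigl(A\otimes_{\mathrm{Sym}_A(I[n])}A\bigr)\otimes_A B_0 \;\simeq\; \mathrm{Sym}_A(I[n+1])\otimes_A B_0 \;\simeq\; \mathrm{Sym}_{B_0}(I[n+1]),
\]
finishing the proof. I expect the main obstacle to be the nullhomotopy step: it requires care in pinning down which maps in $\mathrm{Ho}(A\textrm{-}\Mod)$ are detected by their effect on $\pi_n$, and, relatedly, in the tacit identification of the $B_0$-module $I[n+1]\otimes_A B_0$ appearing at the end with $I[n+1]$ viewed as a $B_0$-module via $B_0\twoheadrightarrow\pi_0(B_0)$.
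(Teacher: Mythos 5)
Your proof is correct and follows essentially the same route as the paper's: associativity of homotopy pushouts to rewrite $\widetilde{B_0}\otimes_{B_1}B_0$ as $A\otimes_{\mathrm{Sym}_A(I[n])}B_0$, the computation $A\otimes_{\mathrm{Sym}_A(I[n])}A\simeq\mathrm{Sym}_A(I[n+1])$ from $\mathrm{Sym}_A$ being left Quillen, and base change for $\mathrm{Sym}$. The single point of divergence is how the nullhomotopy of the structure map $\mathrm{Sym}_A(I[n])\to B_1\to B_0$ is obtained: the paper gets it tautologically by using the first bullet to factor $B_1\to B_0$ through $\widetilde{B_0}$, after which the commutativity of the defining pushout square does all the work; you instead re-derive it by hand, reducing via the free--forgetful adjunction to the underlying $A$-module map $I[n]\to B_0$ and checking it vanishes on $\pi_n$ using axiom (4) and the fact that maps from the discrete module $I[n]$ to an $n$-truncated target are detected on $\pi_n$. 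Both are valid; the paper's route is a touch more economical since it recycles bullet 1. You also rightly flag the tacit identification $I[n+1]\otimes_AB_0\simeq I[n+1]$ (as a $B_0$-module via $B_0\to\pi_0(B_0)$) in the final step; the paper glosses over this as well, and it is harmless for the downstream use in Theorem \ref{sqzerotoinf}, where only $\tau_{\le n+1}$ matters and the two agree since $\pi_{n+1}(I[n+1]\otimes_AB_0)\simeq I\otimes_{\pi_0A}\pi_0B_0\simeq I$.
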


\begin{proof}
The proof of the first assert is part of the proof of Proposition \ref{existence}. Let us prove the second part of the statement. We have the following ladder of homotopy pushouts in $\mathrm{Ho}(A/\salg)$:
\[
\xymatrix{
\mathrm{Sym}_{A}(I[n]) \ar[d] \ar[r]^-{0} & A \ar[d] \\
B_1 \ar[d] \ar[r] & \widetilde{B_0} \ar[dd] \\
\widetilde{B_0} \ar[d] & \\
B_0 \ar[r] &  \widetilde{B_0}\otimes_{B_1}B_0
}
\]
Now, by the upper homotopy cocartesian square, the composite $\mathrm{Sym}_{A}(I[n]) \to B_1 \to \widetilde{B_0}$ is isomorphic (in $\mathrm{Ho}(A/\salg)$) to the composite $\xymatrix{\mathrm{Sym}_{A}(I[n]) \ar[r]^-{0} & A \ar[r] & \widetilde{B_0}}$, so that the following square
\[
\xymatrix{
\mathrm{Sym}_{A}(I[n]) \ar[d]_-{0} \ar[r]^-{0} & A \ar[dd] \\
A \ar[d]  & \\
B_0 \ar[r] &  \widetilde{B_0}\otimes_{B_1}B_0
}
\]
is homotopy cocartesian as well. Therefore, if  $C$ is defined by the homotopy pushout
\[
\xymatrix{
\mathrm{Sym}_{A}(I[n]) \ar[d]_-{0} \ar[r]^-{0} & A \ar[d] \\
A \ar[r] & C
}
\]
there is an induced homotopy pushout
\[
\xymatrix{
A \ar[r] \ar[d] & C \ar[d] \\
B_0 \ar[r] &  \widetilde{B_0}\otimes_{B_1}B_0
}
\]
Let us compute $C$. In order to do this, we observe that $\mathrm{Sym}_{A}: A \textrm{-} \Mod \to A/\salg$ is left Quillen, hence it commutes with homotopy pushouts; since $A\simeq \mathrm{Sym}_{A}(0)$, we get that $C\simeq \mathrm{Sym}_{A}(P)$, where $P$ is defined by the homotopy pushout (in $A \textrm{-} \Mod$)
\[
\xymatrix{
I[n] \ar[r] \ar[d] & 0 \ar[d] \\
0 \ar[r] & P
}
\]
But, by definition of suspension functor in $A \textrm{-} \Mod$, we have then that $P\simeq \Sigma I[n]= I[n+1]$. Therefore $C \simeq \mathrm{Sym}_{A}I[n+1]$.

Now, coming back to the homotopy pushout
\[
\xymatrix{
A \ar[r] \ar[d] & C \ar[d] \\
B_0 \ar[r] &  \widetilde{B_0}\otimes_{B_1}B_0
}
\]
and recalling the base change property of the functor $\mathrm{Sym}_{-}$, we conclude that there is a canonical isomorphism $\widetilde{B_0}\otimes_{B_1}B_0 \simeq \mathrm{Sym}_{B_0}I[n+1]$ in  $\mathrm{Ho}(A/\salg)$. By tracing back the construction of this isomorphism, we see that it is indeed an isomorphism in $\mathrm{Ho}(B_0/\salg)$ (since the $B_0$-algebra structure comes in both cases from the bottom horizontal map of the pushout diagrams).
\end{proof}

\section{Any square-zero extension is an infinitesimal extension}

\begin{thm}\label{sqzerotoinf} Let $n\geq 0$, $A\in \salg$, and $u:B_1 \to B_0$ in $A/\salg$ a square-zero extension by $I[n]$ ($I\subseteq \pi_{n}(B_1)$ a fixed sub-$\pi_0(B_1)$-module). Then there exists a derived $A$-derivation $d_{u}$ of $B_0$ into $I[n+1]$, and an isomorphism $B_0\oplus_{d_u} I[n] \simeq B_1$ in $\mathrm{Ho}(A/\salg /B_0)$. Moreover, such a $d_u$ is uniquely determined as a map in $\mathrm{Ho}(A/\salg/B_0)$.
\end{thm}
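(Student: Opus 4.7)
The strategy is threefold: construct the derivation $d_u \colon B_0 \to B_0 \oplus I[n+1]$ from the pushout data of Lemma \ref{lem}, verify the required homotopy pullback identity $B_0 \oplus_{d_u} I[n] \simeq B_1$ in $\mathrm{Ho}(A/\salg/B_0)$, and deduce uniqueness via Proposition \ref{unique}.

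For the construction, I would start from the homotopy cocartesian square produced by Lemma \ref{lem},
\[
\xymatrix{B_1 \ar[r] \ar[d]_u & \widetilde{B_0} \ar[d] \\ B_0 \ar[r] & \widetilde{B_0} \otimes_{B_1} B_0 \simeq \mathrm{Sym}_{B_0}(I[n+1])}
\]
and use the canonical $B_0$-algebra map $p \colon \mathrm{Sym}_{B_0}(I[n+1]) \to B_0 \oplus I[n+1]$, defined via the universal property of $\mathrm{Sym}_{B_0}$ applied to the identity inclusion $I[n+1] \hookrightarrow B_0 \oplus I[n+1]$. Composing $p$ with the right leg of the pushout yields a map $\widetilde{B_0} \to B_0 \oplus I[n+1]$ in $A/\salg/B_0$; the derivation $d_u$ is then extracted by comparing this with the map coming from the bottom leg $B_0 \to \widetilde{B_0} \otimes_{B_1} B_0 \to B_0 \oplus I[n+1]$ (which yields the trivial derivation), exploiting the truncation $\widetilde{B_0} \to \tau_{\leq n}\widetilde{B_0} = B_0$. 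Equivalently, $d_u$ can be characterised intrinsically as the connecting map in the fundamental cofiber sequence of cotangent complexes
\[
B_0 \otimes_{B_1} \mathbb{L}_{B_1/A} \to \mathbb{L}_{B_0/A} \to \mathbb{L}_{B_0/B_1},
\]
after identifying $\mathbb{L}_{B_0/B_1} \simeq I[n+1]$ through the base-change formula for the cotangent complex applied to the pushout of Lemma \ref{lem}.

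To verify the isomorphism $B_0 \oplus_{d_u} I[n] \simeq B_1$, I would test both sides against mapping spaces: the mapping space axiom 3 of Definition \ref{sqzero} presents $\Map_{A/\salg}(B_1, E)$ as a homotopy pullback for every $n$-truncated $E \in A/\salg$, and unwinding the defining pullback of an infinitesimal extension (Definition \ref{inf}) yields a parallel description of $\Map_{A/\salg}(B_0 \oplus_{d_u} I[n], E)$. Aligning the two pullbacks via the construction of $d_u$ above gives the required isomorphism in $\mathrm{Ho}(A/\salg/B_0)$. Uniqueness is then automatic: any derivation $d$ producing an extension equivalent to $B_1$ yields another $A$-square-zero extension of $B_0$ by $I[n]$ in the sense of Definition \ref{sqzero}, so Proposition \ref{unique} forces the corresponding $B_0$-algebras (and hence, via the derivation--mapping-space adjunction, the derivations themselves) to agree in $\mathrm{Ho}(A/\salg/B_0)$.

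The main obstacle lies in Step 1: the obvious composites produced by the pushout of Lemma \ref{lem} reduce to the trivial derivation, so the nontrivial $d_u$ must be isolated by carefully combining the two legs of the pushout together with the truncation $\widetilde{B_0} \to B_0$ --- or, more cleanly, by going through the cotangent-complex reinterpretation outlined above, where the nontrivial content is precisely the identification $\mathbb{L}_{B_0/B_1} \simeq I[n+1]$ obtained from Lemma \ref{lem}.
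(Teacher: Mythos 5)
Your construction of $d_u$ does not actually close, and you flag this yourself as the ``main obstacle'' without resolving it. The key idea you are missing is to pass from $\widetilde{B_0}\otimes_{B_1}B_0$ to $B_0\otimes_{B_1}B_0$ (via $\mathrm{id}\otimes t$, where $t\colon\widetilde{B_0}\to\tau_{\le n}\widetilde{B_0}\simeq B_0$) and then exploit the \emph{two} $B_0$-algebra structures $j_1(b)=b\otimes 1$ and $j_2(b)=1\otimes b$ on $B_0\otimes_{B_1}B_0$. One of them ($j_1$, the one visible from your pushout as the bottom leg) gives the zero derivation after truncating, while $j_2$ gives the nontrivial $\varphi_{d_u}\colon B_0\to\tau_{\le n+1}(B_0\otimes_{B_1}B_0)\simeq B_0\oplus I[n+1]$. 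Your pushout square only ever produces one map out of $B_0$, namely $j_1$; the right-hand leg gives a map \emph{out of} $\widetilde{B_0}$, and the truncation $\widetilde{B_0}\to B_0$ goes the wrong way to convert it into a map out of $B_0$, so ``carefully combining the two legs'' cannot by itself manufacture the second algebra structure. That step is precisely the content of the paper's map $\theta=\mathrm{id}\otimes_{B_1}t$ and the verification that $\tau_{\le n+1}\theta$ is an equivalence.

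Your alternative route via the cofiber sequence $B_0\otimes_{B_1}\mathbb L_{B_1/A}\to\mathbb L_{B_0/A}\to\mathbb L_{B_0/B_1}$ is in principle sound (and closer to Lurie's treatment), but the identification $\mathbb L_{B_0/B_1}\simeq I[n+1]$ does not follow directly from base change applied to the pushout of Lemma~\ref{lem}: base change gives you $\mathbb L_{\widetilde{B_0}/B_1}\simeq I[n+1]\otimes_A\widetilde{B_0}$, and passing from $\widetilde{B_0}$ to its truncation $B_0$ requires a connectivity estimate for $\mathbb L_{B_0/\widetilde{B_0}}$ of the kind proved only later (Theorem~\ref{thm main estimate}, Corollary~\ref{cor1}), which at best reverses the paper's logical order. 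Finally, your verification-by-mapping-spaces plan and the appeal to Proposition~\ref{unique} both presuppose that you have already shown $B_0\oplus_{d_u}I[n]\to B_0$ is a square-zero extension by $I[n]$ in the sense of Definition~\ref{sqzero} --- an additional nontrivial check that you do not address, and which the paper avoids by instead constructing $\alpha\colon B_1\to B_0\oplus_{d_u}I[n]$ directly from the equalizing property of $j_1,j_2$ and checking it is an equivalence on homotopy fibers.
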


\begin{proof}
Throughout the proof, recall our standing convention $\otimes \equiv \otimes^{\mathbb{L}}$. Consider the fiber - cofiber sequence of $A$-modules
\[
\xymatrix{
I[n] \ar[r] & B_1 \ar[r]^-u & B_0
}.
\]
It induces a fiber - cofiber sequence
\[
\xymatrix{
B_1 \ar[r]^-{u} & B_0 \ar[r] & I[n+1]
}.
\]
The idea is now to apply $(-)\otimes_{B_1}B_0$ to this sequence in order to obtain a split sequence; the one of the $B_0$-algebra structures on $B_0 \otimes_{B_1} B_0$ will induce the zero derivation while the other one will induce a derivation $d_u$ such that $B_1 \simeq B_0 \times_{B_0 \oplus_{d_u}I[n]} B_0$.  Let us work this idea out.

The sequence of $B_0$-modules 
\[
\xymatrix{
B_0 \simeq B_1 \otimes_{B_1}B_0 \ar[r]^-{u\otimes \textrm{id}} & B_0 \otimes_{B_1} B_0 \ar[r]  & B_0 \otimes_{B_1} I[n+1]
}
\]
is clearly split by the product map $B_0 \otimes_{B_1} B_0 \to B_0$; therefore we get a canonical isomorphism $$B_0\otimes_{B_1} B_0 \simeq B_0 \oplus (B_0 \otimes_{B_1} I[n+1])$$ in the homotopy category of $B_0$-modules.

\noindent Let $\widetilde{B_0}:= B_1 \otimes _{\mathrm{Sym}_{A}I[n]} A$, and let $\gamma \colon \tau_{\leq n}\widetilde{B_0} \to B_0$ be the isomorphism of $B_1$-algebras produced by Lemma \ref{lem}.  Introduce the morphism
\[
t \colon \widetilde{B_0} \to \tau_{\le n} \widetilde{B_0} \xrightarrow{\gamma} B_0
\]
and consider the induced map
\[
\theta:= \textrm{id}\otimes_{B_1} t \colon B_0 \otimes _{B_1} \widetilde{B_0} \longrightarrow B_0 \otimes_{B_1} B_0
\]
which is a map of $B_0$-algebras, if we endow $B_0 \otimes_{B_1} B_0$ with the $B_0$-algebra structure given by
\[
j_1 \colon B_0 \to B_0 \otimes_{B_1} B_0, \quad b \longmapsto b \otimes 1.
\]
We claim that $$\tau_{\leq n+1}\theta: \tau_{\leq n+1}(B_0 \otimes _{B_1} \widetilde{B_0}) \longrightarrow \tau_{\leq n+1}(B_0 \otimes_{B_1} B_0)$$ is an isomorphism in $\textrm{Ho}(B_0/\salg)$. Let us prove this claim.

$\diamondsuit$ We compute how $\tau_{\leq n+1}\theta$ acts on homotopy groups. Let us first compute $\pi_{i}(B_0 \otimes_{B_1}I[n+1])$ by using the spectral sequence (\cite[II \S 6, Thm. 6.c]{Quillen_Homotopical_1967})
\[
\pi_p(\pi_q(B_0)[0]\otimes_{\pi_0 B_1}I[n+1]) \Rightarrow \pi_{p+q}(B_0 \otimes_{B_1}I[n+1]).
\]
We have
\[
\pi_p(\pi_q(B_0)[0]\otimes_{\pi_0 B_1}I[n+1]) = \begin{cases}
\pi_q(B_0) \otimes_{\pi_0(B_1)} I & \text{if } p = n+1 \\
0 & \text{if } p \neq n + 1
\end{cases}
\]
so the spectral sequence degenerates, and we have for $q=0, p=n+1$
\[
\pi_{n+1}(B_0 \otimes_{B_1}I[n+1])=\pi_0(B_0)\otimes_{\pi_0(B_1)} I
\]
Now, if $n = 0$ both $B_0$ and $B_1$ are discrete, $B_0 \simeq B_1 / I$ as $B_1$-algebra and $I^2 = 0$, so that
\[
\pi_0(B_0) \otimes_{\pi_0(B_1)} I \simeq I / I^2 \simeq I
\]
If, instead, $n > 0$, then $\pi_0(B_1) \simeq \pi_0(B_0)$ and so
\[
\pi_0(B_0) \otimes_{\pi_0(B_1)} I \simeq I
\]
In conclusion we obtain
\[
\pi_i(B_0 \otimes_{B_1}I[n+1]) = \begin{cases}
0 & \text{if } i < n+1 \\
I & \text{if } i = n+1 \\
\pi_q(B) \otimes I & \text{if } i = n + 1 +q, \: q > 0
\end{cases}
\]
Since
\[
B_0\otimes_{B_1} B_0 \simeq B_0 \oplus (B_0 \otimes_{B_1} I[n+1]),
\]
we conclude that
\[
\pi_i(B_0\otimes_{B_1} B_0) = \begin{cases}
\pi_i(B_0) & \text{if } i < n+1 \\
\pi_{n+1}(B_0) \oplus I & \text{if } i = n + 1 \\
\pi_i(B_0) \oplus (\pi_q(B) \otimes I) & \text{if } i = n + 1 + q, \: q > 0
\end{cases}
\]
On the other hand, by Lemma \ref{lem}, $$B_0 \otimes _{B_1} \widetilde{B_0} \simeq \mathrm{Sym}_{B_0} I[n+1] = B_0 \oplus I[n+1] \oplus R$$ where $R$ is $(n+1)$-connected (i.e. its $\pi_i$'s vanish for $i\leq n+1$), so that there is an isomorphism $$\tau_{\leq n+1} (B_0 \otimes _{B_1} \widetilde{B_0}) \simeq B_0\oplus I[n+1]$$ in the homotopy category of $B_0$-algebras.

The reader may check that the follwoing diagram is commutative
\[
\xymatrix{
B_0 \otimes_{B_1} \widetilde{B_0} \ar[rr]^-\theta \ar[dr] & & B_0 \otimes_{B_1} B_0 \ar[dl] \\
& B_0 \oplus I[n+1]
}
\]
This concludes our proof of the claim that $\tau_{\leq n+1}\theta $ is an equivalence. \,\,\,\,
$\diamondsuit$\\

So we have proved that
\[
\theta_{\leq n+1}:=\tau_{\leq n+1}\theta \colon \tau_{\leq n+1}(B_0 \otimes _{B_1} \widetilde{B_0})\simeq B_0 \oplus I[n+1] \longrightarrow  \tau_{\leq n+1}(B_0 \otimes_{B_1} B_0)
\]
is an isomorphism in $\textrm{Ho}(B_0/\salg)$, and note that the $B_0$-algebra structure on the lhs is given by the map $\varphi_0$ corresponding to the zero derivation. Now we can use the other $B_0$-algebra structure
\[
j_2 \colon B_0 \to B_0 \otimes_{B_1} B_0, \quad b \longmapsto 1\otimes b,
\]
to produce the derivation we are looking for. Let us define
\[
\xymatrix{
\varphi_{d_u} \colon B_0 \simeq \tau_{\leq n+1}B_0 \ar[r]^-{\tau_{\leq n+1}j_2} & \tau_{\leq n+1}(B_0 \otimes_{B_1} B_0) \ar[r]^-{\theta_{\leq n+1}^{-1}} &  B_0 \oplus I[n+1]
}
\]
and observe that this is indeed a map in $\mathrm{Ho}(A/\salg/B)$, so it does correspond to a derived derivation $d_u: B_0 \to I[n+1]$ over $A$. Consider the corresponding infinitesimal extension defined by the homotopy pushout
\[
\xymatrix{
B_0\oplus_{d_u}I[n] \ar[d]_-{\psi_{d_u}} \ar[r]^-{\psi'} & B_0 \ar[d]^-{\varphi_{0}} \\
B_0 \ar[r]_-{\varphi_{d_u}} & B_0\oplus I[n+1]
}
\]
and observe that, since the diagram
\[
\xymatrix{
B_1 \ar[r]^-{u} & B_0 \ar@<.4ex>[r]^-{j_1} \ar@<-.4ex>[r]_-{j_2} & B_0\otimes_{B_1} B_0
}
\]
equalizes, the same is true for the diagram
\[
\xymatrix{
B_1 \ar[r]^-{u} & B_0 \ar@<.4ex>[r]^-{j_1} \ar@<-.4ex>[r]_-{j_2} & B_0\otimes_{B_1} B_0 \ar[r] & \tau_{\leq n}(B_0\otimes_{B_1} B_0) \simeq B_0\oplus I[n+1]
}
\]
and therefore, by definition of $\varphi_{0}$ (induced by $j_1$) and $\varphi_{d_u}$ (induced by $j_2$), the same is true for the diagram 
\[
\xymatrix{
B_1 \ar[r]^-{u} & B_0 \ar@<.4ex>[r]^-{\varphi_0} \ar@<-.4ex>[r]_-{\varphi_{d_u}} & B_0\oplus I[n+1]
}.
\]
So, we have an induced map
\[
\alpha: B_1 \to B_0\oplus_{d_u} I[n]
\]
in $\textrm{Ho}(A/\salg/B_0)$ (where $B_0\oplus_{d_u}I[n]$ is considered as an algebra over $B_0$ via the map $\psi_{d_u}$).

We are left to prove that $\alpha$ is an isomorphism. In order to do this, we will show that, in the following commutative diagram whose lines are fiber sequences, the map $\beta$ is a weak equivalence:
\[
\xymatrix{
\mathrm{hofib}(u) \ar[r] \ar[d]^\beta & B_1 \ar[d]^\alpha \ar[r]^u & B_0 \ar@{=}[d] \\
\mathrm{hofib}(\psi_{d_u}) \ar[r] \ar[d] & B_0 \oplus_{d_u} I[n] \ar[r]^-{\psi_{d_u}} \ar[d]^{\psi'} & B_0 \ar[d]^{\varphi_{d_u}} \\
\mathrm{hofib}(\varphi_0) \ar[r]^{\varphi_0} & B_0 \ar[r] & B_0 \oplus I[n+1]
}
\]
Proposition \ref{prop homotopy fibre} implies that the morphism
\[
\mathrm{hofib}(\psi_{d_u}) \to \mathrm{hofib}(\varphi_0)
\]
is a weak equivalence. Using the 2-out-of-3 property, it is sufficient to check that the composition
\[
\mathrm{hofib}(u) \to \mathrm{hofib}(\psi_{d_u}) \to \mathrm{hofib}(\varphi_0)
\]
is a weak equivalence. The definition of $\alpha$ implies $\psi' \circ \alpha = u$; moreover $\mathrm{hofib}(u)$ and $\mathrm{hofib}(\varphi_0)$ are (separately) isomorphic to $I[n]$. As consequence, it is sufficient to show that the left square in the following diagram
\[
\xymatrix{
I[n] \ar[r]^\gamma \ar@{=}[d] & B_1 \ar[d]^u \ar[r]^u & B_0 \ar[d]^{\varphi_{d_u}} \\
I[n] \ar[r]^\delta & B_0 \ar[r]^-{\varphi_0} & B_0 \oplus I[n+1]
}
\]
commutes \emph{in the homotopy category}, where $\gamma$ and $\delta$ denote the canonical morphisms
\[
\gamma \colon I[n] \simeq \mathrm{hofib}(u) \to B_1, \qquad \delta \colon I[n] \simeq \mathrm{hofib}(\varphi_0) \to B_0
\]
Recall from Proposition \ref{prop infinitesimal fiber} that the morphism $\delta$ is obtained from the diagram
\[
\xymatrix{
I[n] \ar@/_1pc/[ddr]_0 \ar@/^1pc/[drr] \ar@{.>}[dr]^\delta \\
& B_0 \ar[r] \ar[d]_{\varphi_0} & 0 \ar[d] \\
& B_0 \oplus I[n+1] \ar[r] & I[n+1]
}
\]
so that $\varphi_0 \circ \delta \simeq 0 \simeq \varphi_0 \circ u \circ \gamma$. Since $\varphi_0$ is a section of the canonical projection $B_0 \oplus I[n+1] \to B_0$, it is in particular a split mono; as consequence, its image in the homotopy category is a (split) mono as well. We therefore get $\delta \simeq 0 \simeq u \circ \gamma$, completing the proof.
\end{proof}

\section{Application to Postnikov towers}

\begin{prop} \label{prop postnikov sqzero}
Let $n\geq 1$, and $C\in \salg$. Then the $n$-th stage $p_n: C_{\leq n} \to C_{\leq n-1}$ of the Postnikov tower is a $A=k$-square-zero extension by $\pi_{n}(C)[n]$.
\end{prop}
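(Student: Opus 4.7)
The plan is to verify the five axioms of Definition \ref{sqzero} with $B_1 := C_{\leq n}$ (taken cofibrant, as usual), $B_0 := C_{\leq n-1}$, $\varphi := p_n$, $A := k$, and $I := \pi_n(C) = \pi_n(C_{\leq n})$ (the whole module of top homotopy). Axioms (1), (2), (4), and (5) are immediate from standard properties of the Postnikov tower: $C_{\leq n}$ is $n$-truncated and $C_{\leq n-1}$ is even $(n-1)$-truncated; the map $p_n$ induces isomorphisms on $\pi_i$ for $i \leq n-1$, hence is an $(n-1)$-equivalence; the map $\pi_n(p_n)$ sends $\pi_n(C)$ to $\pi_n(C_{\leq n-1}) = 0$, so it is surjective with kernel exactly $I$; finally, (5) is vacuous since $n \geq 1$.

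The only nontrivial axiom is the mapping space axiom (3). Rather than check it directly, I would sidestep it by importing it from a square-zero extension produced via Proposition \ref{existence}. Concretely, apply that proposition to the $n$-truncated cofibrant object $C_{\leq n}$ and the submodule $I = \pi_n(C_{\leq n}) \subseteq \pi_n(C_{\leq n})$ (the condition $I^2 = 0$ is vacuous in our range $n \geq 1$); this yields a genuine $k$-square-zero extension $\varphi \colon C_{\leq n} \to B_0$ by $\pi_n(C)[n]$.

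Next, I observe that by axiom (4) applied to $\varphi$ one has $\pi_n(B_0) \simeq \pi_n(C_{\leq n})/I = 0$, so $B_0$ is in fact $(n-1)$-truncated. By the universal property of the Postnikov truncation $\tau_{\leq n-1}$ (the left Bousfield localization recalled in the proof of Proposition \ref{unique}), the map $\varphi$ factors up to homotopy through $p_n$, giving $f \colon C_{\leq n-1} \to B_0$ with $f \circ p_n \simeq \varphi$ in $\mathrm{Ho}(C_{\leq n}/\salg)$. Both $p_n$ and $\varphi$ are $(n-1)$-equivalences, so $f$ is too; since its source and target are both $(n-1)$-truncated, $f$ is a weak equivalence.

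Therefore $p_n$ and $\varphi$ are isomorphic in $\mathrm{Ho}(C_{\leq n}/\salg)$. Since each of the conditions in Definition \ref{sqzero}—in particular axiom (3), which only involves the mapping spaces $\Map_{k/\salg}(B_0, E)$ and $\Map_{k/\salg}(B_1, E)$ and the sub-$\pi_0(B_1)$-module $I$—is invariant under such an isomorphism, $p_n$ inherits the square-zero extension structure from $\varphi$. The only point that required real work—the mapping space axiom—is thus bypassed by the existence theorem, which is the main conceptual step of the argument.
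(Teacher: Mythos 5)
Your proof is correct, but it takes a genuinely different route from the paper's. The paper verifies axiom (3) by directly repeating the pushout construction and spectral-sequence computation that already appear in the proof of Proposition \ref{existence}, then building an explicit comparison map $D\to C_{\le n-1}$ and checking it is an equivalence after truncation. You instead invoke Proposition \ref{existence} as a black box to manufacture a square-zero extension $\varphi\colon C_{\le n}\to B_0$ with $I=\pi_n(C_{\le n})$, observe that axiom (4) forces $\pi_n(B_0)=0$ so that $B_0$ is actually $(n-1)$-truncated, and then identify $B_0$ with $C_{\le n-1}$ under $C_{\le n}$ purely by the universal property of the Bousfield localization $\tau_{\le n-1}$ plus the local Whitehead theorem (an $(n-1)$-equivalence between $(n-1)$-truncated objects is an equivalence). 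The square-zero structure then transfers along this isomorphism in $\mathrm{Ho}(C_{\le n}/\salg)$, since each axiom of Definition \ref{sqzero} is visibly invariant under post-composition with such an iso. Your version is shorter and more conceptual and removes the redundancy of redoing the $\mathrm{Tor}$ computation; the paper's version is more self-contained and exhibits the equivalence $\tau_{\le n}D\simeq C_{\le n-1}$ concretely, which may be pedagogically desirable in notes whose goal is to make these arguments explicit. One small point worth spelling out if you write this up: you should make explicit that $p_n\colon C_{\le n}\to C_{\le n-1}$ is indeed (a model for) the localization map of $\tau_{\le n-1}$ in $C_{\le n}/\salg$, which is what licenses the factorization $f\circ p_n\simeq\varphi$.
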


\begin{proof}
Let us check that the conditions of Definition \ref{sqzero} are met for $n\geq1$ and $I=\pi_{n}C=\pi_{n}C_{\leq n}$.

\begin{enumerate}
\item Obviously $C_{\leq n}$ and $C_{\leq n-1}$ are $n$-truncated;

\item By definition of Postnikov tower, $p_n$ is an $(n-1)$-equivalence of simplicial $k$-algebras;

\item Using Remark \ref{rmk square zero extension}.1 we are reduced to show that for any $n$-truncated $k$-algebra $E$ the following diagram is homotopy cartesian:
\[
\xymatrix{
\Map_{\salg}(C_{\leq n-1},E) \ar[d] \ar[rr]^-{\Map(p_n, E)} & & \Map_{\salg}(C_{\leq n},E) \ar[d] \\
0 \ar[rr] & & \Hom_{k-\Mod}(\pi_n C, \pi_n E)
}
\]
The idea is to kill $\pi_n$ in $C_{\leq n}$ in order to obtain a better description of $C_{\le n-1}$. In order to do so, consider the following homotopy pushout in $\salg$:
\[
\xymatrix{
\mathrm{Sym}_{k}(\pi_n (C)[n]) \ar[r]^-{a} \ar[d]_-{b} & C_{\leq n} \ar[d] \\
k \ar[r] & D
}
\]
where $a$ is induced by the identity map $\pi_n C \to \pi_n C$ and $b$ is induced by the zero map $\pi_n C \to k$ via the canonical identifications
\begin{align*}
\Hom_{\salg}(\mathrm{Sym}_k(\pi_n(C)[n]),E) & \cong \Hom_{k \textrm{-} \Mod}(\pi_n(C) \otimes_k k[S^n], E) \\
& \cong \Hom_{k \textrm{-} \Mod}(\pi_n(C), \Map(k[S^n], E) \\
& \cong \Hom_{k \textrm{-} \Mod}(\pi_n(C), \pi_0 \Map(k[S^n], E)) & \text{(use Lemma \ref{lemma zero truncation})} \\
& \cong \Hom_{k \textrm{-} \Mod}(\pi_n(C), \pi_n(E))
\end{align*}
Assume for the moment that $\tau_{\leq n}D \simeq C_{\leq n-1}$ in $\mathrm{Ho}(C_{\leq n}/\salg)$; in this case, for any $n$-truncated object $E$ in $\salg$, we get
\begin{align*}
\mathrm{Map}_{\salg}(C_{\leq n-1}, E) & \simeq \mathrm{Map}_{\salg}(\tau_{\leq n}D, E) \simeq \mathrm{Map}_{\salg}(D, E) \\
& \simeq \mathrm{Map}_{\salg}(C_{\leq n}, E) \times_{\mathrm{Map}_{\salg}(\mathrm{Sym}_{k}(\pi_n (C)[n]), E) }^{h} \mathrm{Map}_{\salg}(k, E)
\end{align*}
but
\[
\mathrm{Map}_{\salg}(k, E) \simeq *
\]
and
\[
\mathrm{Map}_{\salg}(\mathrm{Sym}_{k}(\pi_n (C)[n]), E) \simeq \mathrm{Map}_{k-\Mod}(\pi_n (C)[n], E) \simeq \Map_{k-\Mod}(\pi_n C, \pi_n E)
\]
Since $\pi_n(C)$ and $\pi_n(E)$ are discrete, it follows that $\Map_{k \textrm{-} \Mod}(\pi_n C, \pi_n E)$ is discrete as well, so that there is a weak equivalence:
\[
\Map_{k\textrm{-}\Mod}(\pi_n C, \pi_n E) \simeq \pi_0 \Map_{k\textrm{-}\Mod}(\pi_n C, \pi_n E) = \Hom_{k \textrm{-} \Mod}(\pi_n C, \pi_n E)
\]
completing the proof of this step.

\item The canonical map $\pi_{n}(C_{\leq n})/I=0 \rightarrow \pi_{n}(C_{\leq n-1})= 0$ is obviously an isomorphism.
\end{enumerate}

Thus, we are left to show that there is a weak equivalence $\tau_{\leq n}D \simeq C_{\leq n-1}$ in $C_{\le n} / \salg$. To prove this, we will be using the spectral sequence of \cite[Theorem II.6.b]{Quillen_Homotopical_1967}. To begin with, set
\[
R_* := \pi_*(\mathrm{Sym}_k(\pi_n(C)[n]))
\]
so that the spectral sequence reads
\[
E^2_{p,q} = \mathrm{Tor}_p^{R_*}(\pi_*(C_{\le n}), \pi_*(k))_q \Rightarrow \pi_{p+q}(D)
\]
Choose a flat resolution $C_{\bullet *} \to \pi_*(C_{\le n})$ as $R_*$-module so that
\[
\Tor_p^{R_*}(\pi_*(C_{\le n}), \pi_*(k))_q = H^p((C_{\bullet *} \otimes_{R_*} \pi_*k)_q)
\]
Introduce the ideal
\[
I := \bigoplus_{n \ge 1} R_n
\]
Since
\[
R_q = \begin{cases}
k & \text{if } q = 0 \\ 0 & \text{if } 0 < q < n \\ \pi_n(C) & \text{if } q = n
\end{cases}
\]
it follows that
\[
\pi_* k \simeq k \simeq R_*/I
\]
and therefore
\[
C_{\bullet *} \otimes_{R_*} k \simeq C_{\bullet *} / I C_{\bullet *}
\]
In particular, being $I$ a graded ideal, we get
\[
(C_{\bullet *} \otimes_{R_*} k)_q \simeq C_{\bullet q} / J
\]
where
\[
J := \bigoplus_{i+j = q} I_i C_{\bullet j}
\]
As consequence we see that
\[
C_{\bullet q} \otimes_{R_*} k = \begin{cases}
C_{\bullet q} & \text{if } q < n \\ C_{\bullet q} / \pi_n(C) & \text{if } q = n
\end{cases}
\]
Finally, this enables us to compute the second layer of the spectral sequence:
\[
\Tor_p^{R_*}(\pi_*(C_{\le n}), k)_q = \begin{cases}
H^p(C_{\bullet q}) = \delta_{p0} \cdot \pi_q C_{\le n} & \text{if } 0 \le q < n \\
H^p(C_{\bullet n} / \pi_n(C_{\le n}) C_{\bullet 0}) & \text{if } q = n
\end{cases}
\]
In order to compute $H^0(C_{\bullet n} / \pi_n(C_{\le n}) C_{\bullet 0})$, we observe that by construction of $C_{\bullet*}$, the sequence of $R_*$-modules
\[
C_{1 *} \to C_{0 *} \to \pi_*(C_{\le n}) \to 0
\]
is exact. Now, the functor $- \otimes_{R_*} R_*/I$ is right exact and the operation of taking the degree $n$ of an $R_*$-modules defines obviously an exact functor
\[
R_* \textrm{-} \Mod \to R_0 \textrm{-} \Mod
\]
Applying these two functors to the previous exact sequence yields the new sequence
\[
C_{1 n} / \pi_n(C_{\le n}) C_{1,0} \to C_{0n} / \pi_n(C_{\le n}) C_{0,0} \to \pi_n(C_{\le n}) / \pi_n(C) \to 0
\]
which is still exact; in this way we obtain:
\[
H^0(C_{\bullet n} / \pi_n(C_{\le n}) C_{\bullet 0}) \simeq \pi_n (C_{\le n}) / \pi_n(C) = 0
\]
We finally get
\[
\pi_q(D) = \begin{cases} \pi_q(C_{\le n}) & \text{if } q < n \\ 0 & \text{if } q = n \end{cases}
\]
Moreover, the map $C_{\le n} \to D$ induces on the level of $\pi_q$ the map
\[
H^0(C_{\bullet q} \to C_{\bullet q} \otimes_{R_*} k)
\]
which is the identity if $q < n$. This shows that $C_{\le n} \to D$ is an $(n-1)$-equivalence.

At this point, consider the diagram
\[
\xymatrix{
\mathrm{Sym}_k(\pi_n(C)[n]) \ar[r]^-a \ar[d]_b & C_{\le n} \ar[d] \ar@/^1pc/[ddr]^{p_n} \\
k \ar[r] \ar@/_1pc/[drr]_\varphi & D \ar@{.>}[dr] \\ & & C_{\le n-1}
}
\]
In order to prove the existence of the dotted map, we have to show that $p_n \circ a = \varphi \circ b$; by the universal property of the symmetric algebra, this is equivalent to show that the following square commutes:
\[
\xymatrix{
\pi_n(C) \ar[r]^{\mathrm{id}} \ar[d] & \pi_n(C_{\le n}) \ar[d] \\
\pi_n(k) \ar[r] & \pi_n(C_{\le n-1})
}
\]
and since $n \ge 1$ $\pi_n(k) = \pi_n(C_{\le n-1}) = 0$, so that the last statement is trivially true.

The two-out-of-three property now shows that $D \to C_{\le n-1}$ is an $(n-1)$-equivalence; since applying $\pi_n$ we get $\pi_n(D) = \pi_n(C_{\le n-1}) = 0$, it follows that the induced map
\[
\tau_{\le n} D \to C_{\le n-1}
\]
is an $n$-equivalence, hence an equivalence in $C_{\le n} / \salg$ by the local Whitehead theorem.
\end{proof}

At this point we easily recover the important \cite[Lemma 2.2.1.1]{HAG-II}:

\begin{cor} \label{cor controlling postnikov}
Let $A \in \salg$ be a simplicial algebra. For every $n \ge 1$ there exists a unique (derived) derivation
\[
d_n \in \pi_0 \Map_{\salg / A_{\le n-1}}(A_{\le n-1}, A_{\le n-1} \oplus \pi_n(A)[n+1])
\]
such that the associated infinitesimal extension
\[
A_{\le n-1} \oplus_{d_n} \pi_n(A)[n] \to A_{\le n-1}
\]
is isomorphic in $\mathrm{Ho}(\salg/A_{n-1})$ to
\[
A_{\le n} \to A_{\le n-1}
\]
\end{cor}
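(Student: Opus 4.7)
The proof is essentially a direct combination of the two main results of the preceding sections, so my plan is to make this assembly explicit and check that the hypotheses line up.

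First I would invoke Proposition \ref{prop postnikov sqzero}, which tells us that for $n \geq 1$ the Postnikov stage $p_n \colon A_{\leq n} \to A_{\leq n-1}$ is a $k$-square-zero extension by $\pi_n(A)[n]$, with $I = \pi_n(A) \subseteq \pi_n(A_{\leq n})$ the full $\pi_0(A_{\leq n})$-submodule. This verifies all the hypotheses needed to apply Theorem \ref{sqzerotoinf} with $A = k$, $B_1 = A_{\leq n}$, $B_0 = A_{\leq n-1}$, $I = \pi_n(A)$.

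Next, Theorem \ref{sqzerotoinf} then directly produces a derived $k$-derivation $d_n \colon A_{\leq n-1} \to \pi_n(A)[n+1]$ (that is, an element of $\pi_0 \Map_{\salg/A_{\leq n-1}}(A_{\leq n-1}, A_{\leq n-1} \oplus \pi_n(A)[n+1])$), together with an isomorphism
\[
A_{\leq n-1} \oplus_{d_n} \pi_n(A)[n] \simeq A_{\leq n}
\]
in $\mathrm{Ho}(\salg/A_{\leq n-1})$ identifying $\psi_{d_n}$ with $p_n$. The uniqueness of $d_n$ as a map in $\mathrm{Ho}(\salg/A_{\leq n-1})$ is precisely the uniqueness clause of Theorem \ref{sqzerotoinf}.

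There is no real obstacle here; the only point that deserves a line of commentary is that Theorem \ref{sqzerotoinf} is stated with a generic base $A \in \salg$, whereas here we specialize to the absolute case $A = k$, and that the ambient under-category $A/\salg = k/\salg$ coincides with $\salg$ itself, so the target homotopy category in the statement of the corollary matches the one produced by the theorem. With these identifications, the existence and uniqueness of $d_n$ both transfer verbatim, completing the proof.
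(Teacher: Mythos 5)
Your proof is correct and follows exactly the same route as the paper: invoke Proposition \ref{prop postnikov sqzero} to identify $p_n$ as a $k$-square-zero extension by $\pi_n(A)[n]$, then apply Theorem \ref{sqzerotoinf} with $A=k$, $B_1 = A_{\le n}$, $B_0 = A_{\le n-1}$, $I = \pi_n(A)$. Your remark about the identification $k/\salg \simeq \salg$ is a sensible bit of bookkeeping the paper leaves implicit.
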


\begin{proof}
Proposition \ref{prop postnikov sqzero} implies that $A_{\le n} \to A_{\le n-1}$ is a square-zero extension, so that the result follows at once from Theorem \ref{sqzerotoinf}.
\end{proof}

\begin{rmk}
In other words, Corollary \ref{cor controlling postnikov} says that for every simplicial algebra $A$, the $n$-th stage $A_{\le n}$ of its Postnikov decomposition is completely controlled by the $(n-1)$-th stage $A_{\le n-1}$, the homotopy group $\pi_n(A)$ and an element of $\overline{k_n} \in [\mathbb L_{A_{\le n-1}}, \pi_n(A)[n+1]]$ via the condition that the following is a homotopy pullback diagram:
\[
\xymatrix{
A_{\le n} \ar[d]_{p_n} \ar[r]^-{p_n} & A_{\le n-1} \ar[d]^0 \\
A_{\le n-1} \ar[r]^-{k_n} & A_{\le n-1} \oplus \pi_n(A)[n+1]
}
\]
Such derived derivation $\overline{k_n}$ is called the $n$-th Postnikov invariant of $A$.
\end{rmk}

\section{Connectivity estimates}

\begin{df}
Let $n \in \mathbb N$. A simplicial module $M$ is said to be \emph{$n$-connective} if $\pi_i M = 0$ for every $0 \le i < n$. A map of simplicial modules $f \colon M \to N$ is said to be \emph{$n$-connected} if $\hofib(f)$ is $n$-connective.
\end{df}

\begin{prop} \label{lemma connectivity}
Let $A$ be a simplicial $k$-algebra and let $M$ a $m$-connective $A$-module.
\begin{enumerate}
\item if $N$ is a $n$-connective $A$-module, then $M \otimes_A N$ is $(m+n)$-connective;
\item if $f \colon A \to B$ is a morphism of simplicial $k$-algebras such that $\pi_0(f)$ is an isomorphism, then the map $\varphi \colon M \to M \otimes_A B$ is a $m$-equivalence of simplicial $A$-modules.
\end{enumerate}
\end{prop}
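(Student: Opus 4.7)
The plan is to prove part (1) via Quillen's convergent Tor spectral sequence (already used twice above) and then obtain part (2) as a formal consequence of (1) by tensoring an appropriate cofiber sequence with $M$.

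For part (1), I would apply the spectral sequence
\[
E^2_{pq} = \Tor_p^{\pi_*(A)}(\pi_*(M), \pi_*(N))_q \Longrightarrow \pi_{p+q}(M \otimes_A N).
\]
Because $M$ is $m$-connective and $N$ is $n$-connective, the graded $\pi_*(A)$-modules $\pi_*(M)$ and $\pi_*(N)$ are concentrated in degrees $\geq m$ and $\geq n$ respectively, and $\pi_*(A)$ itself sits in non-negative degrees. One can therefore choose a flat resolution $C_{\bullet *} \to \pi_*(M)$ in graded $\pi_*(A)$-modules with each $C_{i *}$ concentrated in degrees $\geq m$ (the bound propagates down the resolution since kernels of degree-preserving maps between modules concentrated in degrees $\geq m$ inherit the same bound). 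The degree-$q$ piece of $C_{i *} \otimes_{\pi_*(A)} \pi_*(N)$ then vanishes for $q < m+n$, because every generator has bidegree $(j,k)$ with $j + k \geq m + n$. Hence $E^2_{pq} = 0$ for $q < m+n$; as $p \geq 0$ forces $q \leq p+q$, any total degree $p+q < m+n$ lies in the vanishing region, and we conclude $\pi_i(M \otimes_A N) = 0$ for $i < m+n$.

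For part (2), I would form the homotopy cofiber sequence $A \xrightarrow{f} B \to C$ in $A$-modules. The long exact sequence of homotopy groups gives $\pi_0(C) = \mathrm{coker}(\pi_0(f)) = 0$, so $C$ is $1$-connective. Tensoring over $A$ with $M$ yields a cofiber sequence
\[
M \xrightarrow{\varphi} M \otimes_A B \to M \otimes_A C,
\]
and part (1) applied to $M$ and $C$ shows that $M \otimes_A C$ is $(m+1)$-connective. Since $A\textrm{-}\Mod$ is stable, $\hofib(\varphi) \simeq \Omega(M \otimes_A C)$ is $m$-connective, which is the desired $m$-equivalence.

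The main delicate point is the existence of the graded flat resolution respecting the connectivity bound in part (1). This is routine but worth spelling out: generators of $\pi_*(M)$ can be chosen in degrees $\geq m$, which ensures $C_{0,*}$ is concentrated there, and iterating on the kernels preserves the bound throughout the resolution. Everything else is then a straightforward combination of the Tor spectral sequence with the stability of $A\textrm{-}\Mod$, entirely in the spirit of the calculations performed earlier in Proposition \ref{existence} and Theorem \ref{sqzerotoinf}.
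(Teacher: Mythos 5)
Part (1) of your proposal is essentially identical to the paper's argument: same Quillen Tor spectral sequence, same choice of a flat resolution $C_{\bullet *} \to \pi_* M$ with $C_{i,j}=0$ for $j<m$, same degree count. That part is fine.

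Part (2) has a real gap. The cofiber-sequence estimate you run only shows that $\hofib(\varphi)\simeq \Omega(M\otimes_A C)$ is $m$-connective, because $M\otimes_A C$ is $(m+1)$-connective and nothing more: since $C=\hocofib(f)$ is merely $1$-connective (its $\pi_1$ may well be nonzero), $\pi_{m+1}(M\otimes_A C)$ need not vanish, hence $\pi_m(\hofib\varphi)$ need not vanish. From an $m$-connective fiber the long exact sequence gives an isomorphism on $\pi_i$ for $i<m$ and a \emph{surjection} on $\pi_m$, but not injectivity on $\pi_m$. In the paper's usage ``$m$-equivalence'' means an isomorphism on $\pi_i$ for all $i\le m$: this is exactly what the paper's own proof establishes (it explicitly computes $\pi_m(\varphi)$), and it is exactly what is needed later in Corollary~\ref{cor1}, where Proposition~\ref{lemma connectivity}.(2) is invoked to conclude that $\pi_n C_f \to \pi_n(C_f\otimes_A B)$ is an \emph{isomorphism}. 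So your argument proves strictly less than the statement. The missing ingredient is the identification of $\pi_m(\varphi)$ itself: in the paper this comes from observing that $E^2_{0,m}=E^\infty_{0,m}$ and that, since $\pi_0(A)\simeq\pi_0(B)$ and $C_{\bullet,*}$ is concentrated in degrees $\ge m$, the degree-$m$ part of $C_{\bullet *}\otimes_{R_*}\pi_*B$ is just $C_{\bullet,m}\otimes_{\pi_0 A}\pi_0 B\simeq C_{\bullet,m}$; hence $\pi_m(M\otimes_A B)\simeq H^0(C_{\bullet,m})\simeq \pi_m M$ and the comparison map is the identity up to this isomorphism. Your cofiber-sequence route would need an extra argument that the boundary map $\pi_{m+1}(M\otimes_A C)\to\pi_m(M)$ is zero, and connectivity alone cannot give this. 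Finally, a minor slip: $A\textrm{-}\Mod$ (simplicial modules) is not stable; the identity $\hofib(\varphi)\simeq\Omega(\hocofib(\varphi))$ that you use does hold, but it must be justified via $\hocofib\simeq\Sigma\hofib$ together with $\Omega\Sigma\simeq\mathrm{id}$ (Corollary~\ref{cor omega sigma}), not by appealing to stability.
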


\begin{proof}
We use the spectral sequence of \cite[II \S 6, Thm 6.b]{Quillen_Homotopical_1967}. Write
\[
R_* := \pi_*(A)
\]
so that the spectral sequence reads off as
\[
\Tor_p^{R_*}(\pi_*M, \pi_* N)_q \Rightarrow \pi_{p+q}(M \otimes_A N)
\]
We begin with the first statement. Choose a flat resolution $C_{\bullet *} \to \pi_* M$ and observe that we can in fact choose $C_{i,j} = 0$ for $j < m$ (just use the free resolutions given by the shifts of $R_*$). Then
\[
\Tor_p^{R_*}(\pi_* M, \pi_* N)_q = H^p((C_{\bullet *} \otimes_{R_*} \pi_* N)_q)
\]
and
\[
(C_{\bullet *} \otimes_{R_*} \pi_* N)_q = \bigoplus_{i + j = q} C_{\bullet i} \otimes_{R_*} \pi_j N
\]
Now, if $q \le m+n-1$ we have that necessarily $i < m$ or $j < n$, so that
\[
(C_{\bullet *} \otimes_{R_*} \pi_* N)_q = 0
\]
so that the spectral sequence degenerates yielding
\[
\pi_p(M \otimes_A N) = 0
\]
if $p \le m+n-1$.

We now turn to the second statement. Taking $N = B$ and $n = 0$ we see that $M \otimes_A B$ is $m$-connected, so that the map $\varphi \colon M \to M \otimes_A B$ is forcily an $(m-1)$-equivalence. We are left to compute $\pi_m(\varphi)$. However, the same computations as above show that $E^2_{0,m} = E^{\infty}_{0,m}$; since $\pi_0(A) \simeq \pi_0(B)$ it follows
\[
(C_{\bullet *} \otimes_{R_*} \pi_* B)_m = C_{\bullet,m} \otimes_{\pi_0(A)} \pi_0 B \simeq C_{\bullet,m}
\]
which implies
\[
\pi_m(M \otimes_A B) \simeq H^0(C_{\bullet,m} \otimes_{R_*} \pi_0 B) \simeq H^0(C_{\bullet,m}) \simeq \pi_m M
\]
Finally, we recall that the map $\pi_m(\varphi) \colon \pi_m M \to \pi_m(M \otimes_A B)$ can be computed as the $0$-th homology of the canonical map
\[
C_{\bullet, m} \to C_{\bullet, m} \otimes_{R_*} \pi_0 B
\]
completing the proof.
\end{proof}

\begin{cor} \label{cor connectivity sym}
Assume that $k$ is of characteristic $0$. Let $A\in \salg$ and $M \in A\textrm{-}\Mod$. If $M$ is $n$-connective $(n>0)$, then $\mathrm{Sym}^{p}_{A}(M)$ is $(pn)$-connective.
\end{cor}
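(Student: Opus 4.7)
The plan is to reduce the statement about symmetric powers to the tensor product connectivity already established in Proposition \ref{lemma connectivity}, exploiting the fact that we are in characteristic zero.

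First, I would recall that in characteristic $0$, the derived symmetric power functor $\mathrm{Sym}^p_A(-)$ is naturally a retract of the $p$-fold tensor power functor $(-)^{\otimes_A p}$. More precisely, the $\Sigma_p$-action on $M^{\otimes_A p}$ admits the averaging idempotent $e = \frac{1}{p!}\sum_{\sigma \in \Sigma_p} \sigma$, which is well-defined since $p!$ is invertible in $k$. This idempotent exhibits $\mathrm{Sym}^p_A(M)$ (equivalently the homotopy coinvariants, which in characteristic zero coincide with invariants) as a direct summand of $M^{\otimes_A p}$ in $\mathrm{Ho}(A\textrm{-}\Mod)$.

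Next, I would iterate part (1) of Proposition \ref{lemma connectivity}: since $M$ is $n$-connective, $M \otimes_A M$ is $2n$-connective, and by induction on $p$ one gets that $M^{\otimes_A p}$ is $(pn)$-connective. Concretely, assuming $M^{\otimes_A (p-1)}$ is $((p-1)n)$-connective, a single application of Proposition \ref{lemma connectivity}.(1) with the second factor $M$ gives $(pn)$-connectivity of $M^{\otimes_A p}$.

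Finally, since a retract in $\mathrm{Ho}(A\textrm{-}\Mod)$ of a $(pn)$-connective module is again $(pn)$-connective (homotopy groups are additive on direct sums), the conclusion that $\mathrm{Sym}^p_A(M)$ is $(pn)$-connective follows immediately. The main subtle point is the identification of the derived symmetric power with a retract of the derived tensor power, which is where the characteristic $0$ hypothesis is essential; in positive characteristic this identification fails and the statement itself becomes false (e.g.\ divided powers appear). Once that identification is in hand, the proof is just a one-line invocation of the previous proposition combined with an induction on $p$.
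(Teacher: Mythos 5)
Your proof is correct and follows essentially the same route as the paper: exhibit $\mathrm{Sym}^p_A(M)$ as a retract of $M^{\otimes_A p}$ using the characteristic-zero averaging map, estimate the connectivity of the tensor power by iterating Proposition \ref{lemma connectivity}.(1), and conclude by passing homotopy groups through the retract. The only cosmetic difference is that you phrase the splitting via the $\Sigma_p$-averaging idempotent, whereas the paper names the right inverse the ``antisymmetrization map''; the underlying argument is the same.
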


\begin{proof} We may suppose that $M$ is cofibrant, so that the derived tensor product and derived symmetric powers are the usual underived ones.
Since $k$ is of characteristic $0$, the canonical map $r \colon M^{\otimes_{A} p} \to \mathrm{Sym}_A^p(M)$ has a right inverse (the antisymmetrization map) $i \colon \mathrm{Sym}_A^p(M)  \to M^{\otimes_{A} p}$ (i.e. $r \circ i$ is the identity of $\mathrm{Sym}_A^p(M)$).

Now since $M$ is $n$-connective, it follows from Prop. \ref{lemma connectivity}, that $M^{\otimes_{A} p}$ is $pn$-connective. But the composite $$\xymatrix{\pi_{i} ( \mathrm{Sym}_A^p(M)) \ar[r]^-{\pi_{i}(i)} & \pi_{i}(M^{\otimes_{A} p}) \ar[r]^-{\pi_{i}(r)} &  \pi_{i} ( \mathrm{Sym}_A^p(M)) }$$ is the identity, and therefore $\pi_{i} ( \mathrm{Sym}_A^p(M))=0$ whenever $\pi_{i}(M^{\otimes_{A} p})=0$. Hence $\mathrm{Sym}_A^p(M)$ has the same connectivity as $M^{\otimes_{A} p}$.
\end{proof}

The proof of the following theorem is precisely the translation of the one given in \cite[Theorem 8.4.3.12]{Lurie_Higher_algebra} However, the exposition given there is crystal-clear and we could not to improve it; as a consequence, we limit ourselves to sketch the outline of the proof.

\begin{thm} \label{thm main estimate}
Let $f \colon A \to B$ be a morphism in $\salg$ and $C_f :=\mathrm{hocofib}(f) \in A \textrm{-} \Mod$ its homotopy cofiber. Then there exists a canonical map $\alpha: C_f \otimes_{A} B \to \mathbb{L}_{f}$ in $\mathrm{Ho}(B-Mod)$, and we have that $\alpha$ is $(2n+2)$-connected if $f$ is $n$-connected ($n \in \mathbb{N}$).
\end{thm}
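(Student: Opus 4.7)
My plan is to split the task in two: first construct $\alpha$ via the universal property of the cotangent complex, then establish the connectivity estimate by reducing to a direct computation for free algebras.

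For the construction, the universal $A$-derivation $d \colon B \to \mathbb L_{B/A}$ is an $A$-linear map whose precomposition with $f$ is (tautologically) the zero derivation on $A$; hence $d$ factors canonically through $C_f = \hocofib(f)$ as a map of $A$-modules $C_f \to \mathbb L_{B/A}$, and extending $B$-linearly via the free--forget adjunction gives the desired $\alpha \colon C_f \otimes_A B \to \mathbb L_f$ in $\mathrm{Ho}(B\textrm{-}\Mod)$.

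For the connectivity estimate, the strategy is a cellular devissage. I would first present $B$ as a filtered homotopy colimit $B \simeq \mathrm{hocolim}_k\, B^{(k)}$ with $B^{(0)} = A$, each stage $B^{(k)} \to B^{(k+1)}$ being a homotopy pushout of $\mathrm{Sym}_A(M_k) \to A$ along some map $\mathrm{Sym}_A(M_k) \to B^{(k)}$, with $M_k$ an $(n+1)$-connective $A$-module. Because $f$ is $n$-connected (so $C_f$ is $(n+1)$-connective), such a cellular presentation can be built by iteratively attaching cells that kill the homotopy groups of $C_f$ from degree $n+1$ upwards. Since both $C_f \otimes_A B$ and $\mathbb L_f$ commute with filtered homotopy colimits in the $B$-variable, it suffices to check the estimate at each finite stage of the tower.

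The essential computation is the base case $B = \mathrm{Sym}_A(M)$ with $M$ being $(n+1)$-connective. Here one has $\mathbb L_{B/A} \simeq M \otimes_A B$ and $C_f \simeq \bigoplus_{p \ge 1} \mathrm{Sym}_A^p(M)$, and $\alpha$ identifies with the projection onto the $p=1$ summand tensored with $B$; its homotopy fibre is $\bigl(\bigoplus_{p \ge 2} \mathrm{Sym}_A^p(M)\bigr) \otimes_A B$. By Corollary \ref{cor connectivity sym} each $\mathrm{Sym}_A^p(M)$ with $p \ge 2$ is $p(n+1)$-connective, hence at least $(2n+2)$-connective; by Proposition \ref{lemma connectivity}.1 this bound survives tensoring with the connective algebra $B$, yielding $(2n+2)$-connectivity of $\alpha$ in the free case. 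The inductive step then compares the transitivity cofibre sequence for $\mathbb L$ at stage $k \to k+1$ with the analogous sequence for $C_f$ and propagates the bound via Proposition \ref{lemma connectivity}. The chief obstacle will be the cellular presentation itself: producing the tower with $(n+1)$-connective cells requires a careful Hurewicz-type induction that exploits the hypothesis that $f$ is $n$-connected; once it is in hand, the rest is essentially bookkeeping with the spectral sequences already exploited in the proof of Proposition \ref{prop postnikov sqzero}.
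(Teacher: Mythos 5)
Your construction of $\alpha$ is in fact slicker than the paper's: instead of routing through an auxiliary infinitesimal extension $B^{\eta}$ (which is what the paper does, and what its footnote acknowledges could be avoided), you simply observe that the underlying $A$-linear map of the universal $A$-derivation $B \to \mathbb L_{B/A}$ kills $A$, factor it through $C_f = \hocofib(f)$, and extend $B$-linearly. This is correct and gives the same map. The global strategy for the estimate — a cellular devissage where each attaching map is a free algebra on an $(n+1)$-connective module, combined with stability of the estimate under cobase change, composition, and filtered colimits — is also essentially the one the paper (following Lurie) uses. The one substantive structural difference is that you build an a priori infinite cell tower $B^{(k)}$ with $\mathrm{hocolim}\, B^{(k)} \simeq B$ and invoke commutation of $\alpha$ with filtered colimits to close the argument, whereas the paper produces a \emph{finite} factorization $f = f_{n+1}\circ \phi_{n+1}\circ\cdots\circ\phi_1$ by taking $M = \hofib(f)$ at each stage, so that connectivity increases by one each time and $f_{n+1}$ is already $(2n+2)$-connected (then one does not need to pass to a colimit, only a ``$(2n+2)$-connected $f$ implies $(2n+2)$-connected $\alpha_f$'' endgame, the paper's property 4).

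There are, however, two genuine gaps. First, your ``essential computation'' is carried out for $f \colon A \to \mathrm{Sym}_A(M)$, but the transition maps in your tower are cobase changes of $\mathrm{Sym}_A(M_k) \to A$, which goes the other way; to feed the cobase-change stability property you need the estimate for $\mathrm{Sym}_A(M) \to A$, not for $A \to \mathrm{Sym}_A(M)$. The analogous computation for $\mathrm{Sym}_A(M) \to A$ does give the same $(2n+2)$-bound (here $C_f \simeq \bigoplus_{p\ge 1}\mathrm{Sym}^p_A(M)[1]$ and $\mathbb L_{A/\mathrm{Sym}_A(M)} \simeq M[1]$, the latter requiring a short argument with the transitivity cofiber sequence rather than the standard free-algebra formula), but as written the base case does not match the building blocks. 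Second, the composition stability you dismiss as ``bookkeeping'' is not free: you need a quantitative statement controlling the connectivity of $M \otimes_A N \to M \otimes_B N$ when $A \to B$ is $(n-1)$-connected (the paper points to Lurie's Lemma 8.4.3.16 or a spectral-sequence argument). Without this, iterating the estimate along the tower is not automatic, and the convergence of the colimit argument also hinges on the transition maps being at least $(n-1)$-connected — true with $(n+1)$-connective cells, but it should be said. Finally note that the connectivity of $\mathrm{Sym}^p$ (Corollary \ref{cor connectivity sym}) is only established in the paper under a characteristic-zero assumption, which is an implicit hypothesis shared by both your argument and the paper's.
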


\begin{proof}
Let $\mathbb L(f) \colon \mathbb L_A \to \mathbb L_B$ be the canonical map induced by $f$, so that
\[
\mathbb L_f \simeq \hocofib (\mathbb L(f))
\]
We have a canonical map
\[
\eta \colon \mathbb L_B \to \mathbb L_f
\]
corresponding to a derived derivation
\[
d_\eta \colon B \to B \oplus \mathbb L_f
\]
Observe that $\eta \circ \mathbb L(f)$ is nullhomotopic; denote by $\varphi_0^A$ the derivation associated to the null morphism $\mathbb L_A \to \mathbb L_f$; the equivalence of simplicial sets
\[
\Map_{A \textrm{-} \Mod}(\mathbb L_A, \mathbb L_f) \simeq \Map_{\salg/A}(A, A \oplus \mathbb L_f)
\]
implies that the associated derivations, $d_{\mathbb L(f) \circ \eta}$ and $\varphi_0^A$ lie in the same path component, i.e. they are homotopic. Using the notations of Remark \ref{rmk bifunctor} and Lemma \ref{lemma naturality square zero} we obtain
\[
s(f, \mathrm{id}_{\mathbb L_f}) \circ d_{\eta \circ \mathbb L(f)} = d_\eta \circ f, \qquad \varphi_0^B \circ f = s(f, \mathrm{id}_{\mathbb L_f}) \circ \varphi_0^A
\]
where $\varphi_0^B$ denotes the derivation associated to the null map $\mathbb L_B \to \mathbb L_f$. It follows now
\[
d_\eta \circ f = s(f, \mathrm{id}_{\mathbb L_f}) \circ d_{\eta \circ \mathbb L(f)} \simeq s(f,\mathrm{id}_{\mathbb L_f}) \circ \varphi_0^A = \varphi_0^B \circ f
\]

Let at this point $\psi_\eta \colon B^\eta \to B$ be the induced infinitesimal extension, defined by the homotopy pullback
\[
\xymatrix{
B^\eta \ar[r]^{\psi'} \ar[d]_{\psi_\eta} & B \ar[d]^{\varphi_0^B} \\
B \ar[r]_{d_\eta} & B \oplus \mathbb L_f
}
\]
Since $A$ is cofibrant over $k$, Corollary \ref{cor uhmp} can be used to deduce the existence of a map $f' \colon A \to B^\eta$ such that
\[
f' \circ \psi_\eta \simeq f
\]
We obtain in this way a canonical map of $A$-modules
\[
\hocofib(f) \to \hocofib(\psi_\eta)
\]
which corresponds, under adjunction, to a canonical map
\[
\alpha_f \colon \hocofib(f) \otimes_A B \to \hocofib(\psi_\eta) \simeq  \hofib(\psi_\eta)[1] \simeq \mathbb L_f
\]
where the last isomorphism is due to Proposition \ref{prop infinitesimal fiber}. We are therefore left to show that $\alpha_f$ is $(2n+2)$-connected \footnote{The map $\alpha_f$ can be constructed also using a small generalization of the beginning of the proof of Theorem \ref{sqzerotoinf}. We leave the details to the interested reader.}. 

The proof proceeds now in several steps. The strategy is to describe the map $f$ as a finite composition
\[
f = f_{n+1} \circ \phi_{n+1} \circ \ldots \circ \phi_1
\]
in such a way that $\alpha_{f_{n+1}}$ and $\alpha_{\phi_i}$ are $(2n+2)$-connected for every $i$ (plus some other conditions), and then deduce the property from stability properties of the connectivity of construction associating $\alpha_f$ to $f$. Having outlined the strategy, we prefer to begin with these stability properties:
\begin{enumerate}
\item assume that $h = gf$; if both $f$ and $g$ are $(n-1)$-connected and moreover both $\alpha_f$ and $\alpha_g$ are $(2n+2)$-connected, then $\alpha_h$ is $(2n+2)$-connected. This is (almost) straightforward after that one gives an appropriate estimate for the map $M \otimes_A N \to M \otimes_B N$, which can be found in \cite[Lemma 8.4.3.16]{Lurie_Higher_algebra}, but which can also be obtained by the usual spectral sequence of \cite[II \S 6, Thm 6.b]{Quillen_Homotopical_1967} by carefully choosing flat resolutions;

\item assume that
\[
\xymatrix{
A \ar[r]^f \ar[d] & B \ar[d] \\ A' \ar[r]^{f'} & B'
}
\]
is a pushout square. If $\alpha_f$ is $(2n+2)$-connected then $\alpha_{f'}$ is $(2n+2)$-connected. In fact, the naturality of the construction of $\alpha_f$ shows that we have a commutative diagram
\[
\xymatrix{
\hocofib(f) \otimes_A B \ar[d] \ar[r]^-{\alpha_f} & \mathbb L_f \ar[d] \\
\hocofib(f') \otimes_{A'} B' \ar[r]^-{\alpha_{f'}} & \mathbb L_{f'}
}
\]
and now we have isomorphisms
\[
\mathbb L_{f'} \simeq \mathbb L_f \otimes_B B', \qquad \hocofib(f) \otimes_A B \otimes_B B' \simeq \hocofib(f) \otimes_A B'
\]
Moreover, the dual of Proposition \ref{prop homotopy fibre} implies $\hocofib(f) \simeq \hocofib(f')$; under this isomorphism we obtain
\[
\hocofib(f) \otimes_A B' \simeq \hocofib(f') \otimes_{A'} B'
\]
Since the functor $- \otimes_{A'} B'$ preserves cofiber sequences, it preserves fiber sequences as well, yielding
\[
\hofib(\alpha_{f'}) \simeq \hofib(\alpha_f) \otimes_B B'
\]
It is sufficient to apply now Proposition \ref{lemma connectivity}.(1) to deduce that if $\hofib(\alpha_f)$ is $(2n+2)$-connected then the same holds true for $\hofib(\alpha_{f'})$;

\item for every $n$-connected $k$-module $M$, the map $f \colon \mathrm{Sym}_k(M) \to k$ induced by the null map $M \to k$ is $(2n+2)$-connected. To prove this one first observe that there is a fiber sequence
\[
M \to 0 \to \mathbb L_{k / \mathrm{Sym}_k(M)}
\]
(this is essentially the formal computation that can be found in \cite[Proposition 8.4.3.14]{Lurie_Higher_algebra}), so that the codomain of $\alpha_f$ is $M[1]$. Next, we observe that
\[
\hofib(f) \simeq \bigoplus_{i \ge 1} \mathrm{Sym}^i(M)
\]
so that
\[
\hocofib(f) \simeq \hofib(f)[1] \simeq \bigoplus_{i \ge 1} \mathrm{Sym}^i(M[1])
\]
Finally, one checks directly that the composition
\[
M[1] \simeq \mathrm{Sym}^1(M[1])[-1] \to \bigoplus_{i \ge 1} \mathrm{Sym}^i(M[1])[-1] \xrightarrow{\alpha_f} M[1]
\]
is homotopic to the identity. This implies that
\[
\hofib(\alpha_f) \simeq \bigoplus_{i \ge 2} \mathrm{Sym}^i(M[1])
\]
Since $M[1]$ is $(n+1)$-connected, the result follows now from Corollary \ref{cor connectivity sym};

\item if $f$ is $(2n+2)$-connected, then $\alpha_f$ is $(2n+2)$-connected. Indeed, it is sufficient to observe that both $B \otimes_A \hocofib(f)$ and $\mathbb L_f$ are $(2n+2)$-connective (the first thanks to Proposition \ref{lemma connectivity}.(1) and the second thanks to general properties of the cotangent complex - see for example \cite[Lemma 8.4.3.17]{Lurie_Higher_algebra}).
\end{enumerate}

\noindent As a second step, we will need to produce a suitable factorization of the morphism $f \colon A \to B$. Let $M = \hofib(f)$. Then we have a natural map $\mathrm{Sym}_k(M) \to A$ induced by the universal property of the symmetric algebra, which enables us to form the homotopy pushout square
\[
\xymatrix{
\mathrm{Sym}_k(M) \ar[d] \ar[r]^-{\psi_1} & k \ar[d] \\ A \ar[r]^-{\phi_1} & A_1
}
\]
where $\psi_1$ is the map corresponding to the null morphism $M \to k$. This induces a morphism $f' \colon A' \to B$ such that $f_1 \circ \phi_1 \simeq f$; the 2-out-of-3 property of (local) weak equivalences readily implies that $\phi_1$ is an $(n-1)$-equivalence. Then we claim that $f_1$ is $(n+1)$-connected. In fact, if
\[
I := \bigoplus_{i \ge 1} \mathrm{Sym}^i(M)
\]
denotes the homotopy fiber of $\psi$, we obtain (using the fact that $A \otimes_{\mathrm{Sym}_k(M)} -$ preserves cofiber sequences and hence fiber sequences) the following morphism of fiber sequences:
\[
\xymatrix{
A \otimes_{\mathrm{Sym}_k(M)} I \ar[d]^g \ar[r] & A \ar@{=}[d] \ar[r] & A' \ar[d]^{f_1} \\
M \ar[r] & A \ar[r] & B
}
\]
which implies $\hocofib(f_1) \simeq \hocofib(g)[1]$. Observe now that the composition
\[
M \simeq \mathrm{Sym}^1_k(M) \to I \to A \otimes_{\mathrm{Sym}_k(M)} I
\]
is a section of $g$. It follows therefore that $\hofib(g) \simeq \hocofib(g)[-1]$ is a direct summand of $A \otimes_{\mathrm{Sym}_k(M)} I$. We therefore see that it is sufficient to show that this tensor product is $n$-connected. However, this follows at once from Proposition \ref{lemma connectivity}.(1) and Corollary \ref{cor connectivity sym}.

\noindent We are finally ready to prove that $\alpha_f$ is always $(2n+2)$-connected. Using the second step we can write $f$ as a composition
\[
f = f_{n+1} \circ \phi_{n+1} \circ \ldots \circ \phi_1
\]
where $f_{n+1}$ is $(2n+2)$-connected. Using 4. and recalling that each of the maps $\phi_i$ is $(n-1)$-connected, we can use 1. to reduce ourselves to prove that $\alpha_{\phi_i}$ is $(2n+2)$-connected for every $i$. However, this follows from 2. and 3.
\end{proof}

\begin{cor}\label{cor1}
Let $f:A\to B$ be a map in $\salg$, and $n\in \mathbb{N}$.
\begin{enumerate}
\item If $f$ is $n$-connected, then $\mathbb{L}_{f}$ is $(n+1)$-connective.
\item If  $\mathbb{L}_{f}$ is $(n+1)$-connective and $\pi_0(f)$ is an isomorphism, then $f$ is $n$-connected.
\end{enumerate}
\end{cor}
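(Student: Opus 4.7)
The plan is to deduce both statements from Theorem \ref{thm main estimate}, which controls the canonical map $\alpha \colon C_f \otimes_A B \to \mathbb L_f$, combined with the two parts of Proposition \ref{lemma connectivity}. The key bridge is that $C_f = \hocofib(f) \simeq \hofib(f)[1]$, so an $n$-connected map has $(n+1)$-connective cofiber.

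For part (1), assume $f$ is $n$-connected, so $C_f$ is $(n+1)$-connective. Proposition \ref{lemma connectivity}.(1) (with the $0$-connective module $B$) yields that $C_f \otimes_A B$ is $(n+1)$-connective, and Theorem \ref{thm main estimate} says $\hofib(\alpha)$ is $(2n+2)$-connective. Reading off the long exact sequence of $\hofib(\alpha) \to C_f \otimes_A B \to \mathbb L_f$, we find that for every $i < n+1$ the group $\pi_i(\mathbb L_f)$ is sandwiched between vanishing groups, whence $\mathbb L_f$ is $(n+1)$-connective.

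For part (2), we induct on $n$. The base $n=0$ is immediate: $\pi_0(f)$ being an isomorphism makes $f$ trivially $0$-connected. For the inductive step, assume the statement for $n-1$ (with $n\ge 1$) and suppose $\mathbb L_f$ is $(n+1)$-connective with $\pi_0(f)$ iso. A fortiori $\mathbb L_f$ is $n$-connective, so the inductive hypothesis gives that $f$ is $(n-1)$-connected. Theorem \ref{thm main estimate} then tells us that $\alpha$ is $2n$-connected, and the long exact sequence of $\hofib(\alpha) \to C_f \otimes_A B \to \mathbb L_f$, together with the $(n+1)$-connectivity of $\mathbb L_f$, forces $C_f \otimes_A B$ to be $(n+1)$-connective. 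Since $f$ is $(n-1)$-connected, $C_f$ is $n$-connective, so Proposition \ref{lemma connectivity}.(2) --- applicable because $\pi_0(f)$ is iso --- gives that $C_f \to C_f \otimes_A B$ is an $n$-equivalence. Hence $\pi_i(C_f) = 0$ for $i\le n$, so $C_f$ is $(n+1)$-connective and $f$ is $n$-connected.

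The main subtlety is the bootstrap in part (2): Theorem \ref{thm main estimate} gives information only about $C_f \otimes_A B$, whereas we need a bound on $C_f$ itself, and Proposition \ref{lemma connectivity}.(2) requires a prior connectivity estimate on $C_f$ in order to transfer the information back. The induction supplies exactly this initial bound at the previous level, so one can promote the connectivity of $f$ by one step at each iteration.
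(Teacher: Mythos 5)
Your proof is correct and uses the same key ingredients as the paper: Theorem \ref{thm main estimate} together with the two parts of Proposition \ref{lemma connectivity}, combined with the bootstrap that $(n-1)$-connectivity of $f$ lets one transfer information from $C_f \otimes_A B$ back to $C_f$. The only difference is cosmetic: in part (2) the paper phrases the bootstrap as a contrapositive argument on a minimal $n$ for which $f$ fails to be $n$-connected, whereas you run an explicit induction on $n$; these are logically equivalent.
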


\begin{proof}
The first part is an immediate consequence of Theorem \ref{thm main estimate}. In fact, using the notations of that theorem, if $f$ is $n$-connected, then $C_f = \hofib(f)[1]$ is $(n+1)$-connective and $\alpha : C_f \otimes_{A} B \to \mathbb{L}_{f}$ is $(2n+2)$-connected; moreover, Proposition \ref{lemma connectivity}.(1) implies that $C_f \otimes_A B$ is at least $(n+1)$-connective; the long exact sequence associated to a cofiber sequence implies then that $\mathbb L_f$ is $n$-connective.

Conversely, assume that $\pi_0(f)$ is an isomorphism. We will show that if $f$ is not $n$-connected, then $\mathbb L_f$ is not $(n+1)$-connective. We can assume that $n$ is minimal with respect to this property, so that $f$ is $(n-1)$-connected and $\pi_n C_f \ne 0$. Observe that $A \to B \to C_f$ is a fiber - cofiber sequence, and therefore $\pi_0(B) \to \pi_0(C_f)$ is surjective. If $\pi_0(f)$ is an isomorphism, we obtain $\pi_0(C_f) = 0$, so that $n \ge 1$.

Since $C_f$ is $n$-connected, the map
\[
\alpha \colon C_f \otimes_A B \to \mathbb L_f
\]
is $(2n)$-connected. Since $2n > n$, it follows that
\[
\pi_n(C_f \otimes_A B) \to \pi_n \mathbb L_f
\]
is an isomorphism. Moreover, since $\pi_0(A) \simeq \pi_0(B)$ we can apply Proposition \ref{lemma connectivity}.(2) to conclude that the map
\[
\pi_n C_f \to \pi_n(C_f \otimes_A B)
\]
is an isomorphism as well. It follows that
\[
\pi_n \mathbb L_f \simeq \pi_n C_f \ne 0
\]
completing the proof.
\end{proof}

\begin{rmk}\begin{enumerate}
\item Note that the proof of Corollary \ref{cor1} (1), shows a bit more than what is in the statement. In the fiber sequence $$\xymatrix{C_f \otimes_{A} B \ar[r]^-{\alpha} & \mathbb{L}_{f} \ar[r] & \mathrm{hocofib}(\alpha), }$$ we know that $\mathrm{hocofib}(\alpha)$ is $(2n+3)$-connective and that $C_f \otimes_{A} B$ is $(n+1)$-connective. Therefore we may also identify the first a priori non-zero homotopy group of $\mathbb{L}_{f}$:  $\pi_{n+1} (C_f \otimes_{A} B) \simeq \pi_{n+1}(\mathbb{L}_{f})$ (since $2n+3 > n+2$ for $n\geq 0$). More generally, we have that the $i$-th homotopy groups of $C_f \otimes_{A} B$ and $\mathbb{L}_{f}$ are isomorphic for any $i < 2n+2$ (the interest of this remark grows linearly with $n$).
\item It follows from the previous corollary that the relative cotangent complex $\mathbb{L}_{\pi_0 (A)/A}$ is $1$-connective (i.e. $\pi_{i}\mathbb{L}_{\pi_0 (A)/A}=0$ for $i=0,1$). So the same is true for $\mathbb{L}_{\textrm{t}(X)/X}$ where $X$ is a Deligne-Mumford derived stack and $\textrm{t}(X)$ its truncation.
\end{enumerate}
\end{rmk}

\begin{cor}
For a morphism $f:A\to B$ in $\salg$ the following properties are equivalent
\begin{enumerate}
\item $f$ is a weak equivalence
\item $\pi_{0}(f):\pi_0 (A) \to \pi_0 (B)$ is an isomorphism, and $\mathbb{L}_{f} \simeq 0$.
\end{enumerate}
\end{cor}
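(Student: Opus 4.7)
The plan is to prove the two implications separately, with (2) $\Rightarrow$ (1) being a direct induction fed by Corollary \ref{cor1}.(2) and (1) $\Rightarrow$ (2) being a formal consequence of the cofiber sequence defining the relative cotangent complex.

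For (1) $\Rightarrow$ (2), I would observe first that weak equivalences clearly induce isomorphisms on $\pi_0$, so only the vanishing of $\mathbb{L}_f$ needs argument. I would exploit the cofiber sequence
\[
\mathbb{L}_A \otimes_A B \longrightarrow \mathbb{L}_B \longrightarrow \mathbb{L}_f
\]
in $B\textrm{-}\Mod$. If $f$ is a weak equivalence, then base change along $f$ is (homotopically) the identity and the natural map $\mathbb{L}_A \otimes_A B \to \mathbb{L}_B$ is an equivalence (this is just the homotopy invariance of the cotangent complex under weak equivalences of algebras). The cofiber is therefore contractible, giving $\mathbb{L}_f \simeq 0$.

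For (2) $\Rightarrow$ (1), which is the content of the corollary, I would apply Corollary \ref{cor1}.(2) repeatedly. Since $\mathbb{L}_f \simeq 0$, it is $(n+1)$-connective for every $n \in \mathbb{N}$. Combined with the hypothesis that $\pi_0(f)$ is an isomorphism, Corollary \ref{cor1}.(2) then yields that $f$ is $n$-connected for every $n \in \mathbb{N}$. In other words, $\pi_i(\hofib f) = 0$ for all $i \geq 0$. The long exact sequence of homotopy groups associated to the fibre sequence $\hofib f \to A \to B$ then forces $\pi_i(f)$ to be an isomorphism for every $i \geq 0$, which is exactly the statement that $f$ is a weak equivalence in $\salg$.

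There is no real obstacle here: the difficult content has been packaged into Theorem \ref{thm main estimate} and Corollary \ref{cor1}. The only small subtlety is to make sure that in (1) $\Rightarrow$ (2) one has indeed the cofiber sequence above at one's disposal (this is the transitivity triangle for the cotangent complex, which is standard in the setting of $\salg$) and, symmetrically, in (2) $\Rightarrow$ (1) that the hypothesis $\pi_0(f)$ iso is truly needed to invoke Corollary \ref{cor1}.(2) for each $n$; after that the argument is essentially bookkeeping on homotopy groups.
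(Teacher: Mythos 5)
Your proposal is correct and takes essentially the same route as the paper: the paper dismisses (1) $\Rightarrow$ (2) as obvious and cites Corollary \ref{cor1} for (2) $\Rightarrow$ (1), and you have simply unpacked the "obvious" half via the transitivity cofiber sequence and spelled out the bookkeeping in the other direction.
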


\begin{proof}
$(1)\Rightarrow (2)$ is obvious. From Corollary \ref{cor1}, we get that $f$ is $n$-connected for any $n\geq 0$, i.e. it is a weak equivalence. So $(2)\Rightarrow (1)$.
\end{proof}

\section{An exercise in derived deformation theory}
We want to explain how derived deformation theory fills the gaps in classical deformation theory, by working out an explicit example of a very 'classical' deformation problem: the \emph{infinitesimal deformations of a proper smooth scheme over} $k=\mathbb{C}$. 

Since we will work out an example in characteristic zero, the reader might, in this \S , switch from $\salg$ to $\mathbf{cdga}^{\leq 0}_{\mathbb{C}}$, if he wishes to.\\
 
Let us recall that the object of study of classical (formal) deformation theory are reduced functors
\[
 \mathbf{Art}_{\mathbb C} \to \mathbf{Grpd} \hookrightarrow \sset
\]
(i.e. the image of $\mathbb C$ is weakly contractible). Here, $\mathbf{Art}_{\mathbb C}$ denote the category of artinian $\mathbb C$-algebras with residue field isomorphic to $\mathbb C$. For example, if $F \colon \mathbf{Alg}_{\mathbb C} \to \mathbf{Grpd}$ is a classical moduli problem and $\xi \in F(\mathbb C)$ is a point, we can obtain a formal reduced functor by forming the homotopy pullback
\[
\hat{F}_\xi := F \times_{F(\mathbb C)} \xi
\]
and then restricting it to $\mathbf{Art}_{\mathbf C}$; this is called the \emph{formal completion} of $F$ at $\xi$.\\

An interesting, and classically well known moduli functor is given by
\[
F \colon \mathbf{Alg}_{\mathbb C} \to \mathbf{Grpd}
\]
sending a $\mathbb C$-algebra $R$ into the groupoid of proper smooth morphisms
\[
Y \to \mathrm{Spec}(R)
\]
and isomorphisms between them. In this case, if we fix a proper smooth scheme 
\[
\xi \colon X_0 \to \mathrm{Spec}(\mathbb C)
\]
the corresponding homotopy base change $\hat{F}_\xi$ is exactly the usual functor $\mathrm{Def}_{X_0}$. The following properties are well known:

\begin{enumerate}
\item $\hat{F}_\xi(\mathbb C[t]/t^{n+1})$ is the groupoid of $n$-th order infinitesimal deformations of $\xi$;

\item if $\xi_1 \in \hat{F}_\xi(\mathbb C[\varepsilon])$ is a first order deformation of $\xi$, then $\mathrm{Aut}_{\hat{F}_\xi(\mathbb C[\varepsilon])}(\xi_1) \simeq H^0(X_0, T_{X_0})$;

\item $\pi_0( \hat{F}_\xi(\mathbb C[\varepsilon])) \simeq H^1(X_0, T_{X_0})$;

\item if $\xi_1$ is a first order deformation there exists an obstruction $\mathrm{obs}(\xi_1) \in H^2(X_0, T_{X_0})$ such that $\mathrm{obs}(\xi_1) = 0$ if and only if $\xi_1$ extends to a second order deformation.
\end{enumerate}

The first three properties are really satisfactory, but not the fourth one. It raises two \emph{questions} :
\begin{enumerate}
\item \emph{how to interpret geometrically the entire $H^2(X_0, T_{X_0})$ ?}
\item \emph{how to identify intrinsically the space of all obstructions\footnote{It can happen that every obstruction is trivial and $H^2(X_0,T_{X_0}) \ne 0$. An example is given by a smooth projective surface $X_0 \subseteq \mathbb P_{\mathbb C}^3$ of degree $\ge 6$.} inside $H^2(X_0, T_{X_0})$ ?} 
\end{enumerate}

Derived deformation theory gives a more general perspective on the subject, and answers both questions. It allows a natural interpretation of $H^2(X_0,T_{X_0})$ as the group of derived deformations i.e. (isomorphism classes of) deformations over a specific non-classical ring, and it identifies, consequently, the obstructions space in a very natural way. Let's work these answers out.

Define
\[
\underline{F} \colon \mathrm{sAlg}_{\mathbb C} \to \sset
\]
sending a simplicial algebra $A$ to the nerve of the category of proper smooth maps of derived schemes
\[
Y \to \mathbb R \mathrm{Spec}(A)
\]
and equivalences between them. Now, it is easy to check that if $Y \to \mathrm{Spec} \, R$ is a smooth map from $Y$ a derived scheme to a usual affine scheme, then $Y$ is underived (i.e. equivalent to its truncation). As a consequence, we get $\underline{F}(R)= F(R)$, for any discrete $R$. It can be shown that $F$ is in fact a derived Artin stack, hence it is infinitesimally cohesive (see \cite[Corollary 6.5]{DAG-IX} and \cite[Lemma 2.1.7]{DAG-XIV}) i.e.\  it preserves homotopy pullbacks diagrams of the form $$\xymatrix{A' \ar[r] \ar[d] & A \ar[d]^-{p}\\ B' \ar[r]_-{q} & B }$$ with $\pi_0(p)$ and $\pi_0(q)$ surjective with nilpotent kernels. Introduce the full subcategory $\mathrm{sArt}_{\mathbb C}$ of $\mathrm{sAlg}_{\mathbb C}$  of simplicial $\mathbb C$-algebras $A$ such that $\pi_0(A) \in \mathbf{Art}_{\mathbb C}$, each $\pi_{i}(A)$ is a $\pi_0 (A)$-module of finite type, and $\pi_i(A)=0$, for $i>>0$. If we fix
\[
\xi \in \underline{F}(\mathbb C) = F(\mathbb C)
\]
then we can, as above, form the derived completion of $\underline{F}$ at $\xi$ by taking the homotopy pullback:
\[
\hat{\underline{F}}_\xi := \underline{F} \times_{\underline{F}(\mathbb C)}^h \xi
\]
The following proposition answers to Question 1 above, by saying that the entire $H^2(X_0,T_{X_0})$ can be interpreted as a space of deformations over a \emph{derived} base.

\begin{prop} \label{prop deformation}
$\pi_0(\hat{\underline{F}}_\xi (\mathbb C \oplus \mathbb C[1])) \simeq H^2(X_0,T_{X_0})$.
\end{prop}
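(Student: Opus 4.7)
The plan is to identify $\hat{\underline{F}}_\xi(\mathbb{C} \oplus \mathbb{C}[1])$ with the positive truncation of a shift of the tangent complex of $\underline{F}$ at $\xi$. The three classical invariants of $X_0$ recalled before the statement---$H^0$, $H^1$ and $H^2$ of $T_{X_0}$---all fall out of the homotopy groups of that tangent complex, and $H^2$ is singled out by placing the deformation parameter in degree $1$.

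As a warm-up I would use only Theorem \ref{sqzerotoinf} and the infinitesimal cohesiveness of $\underline{F}$. The trivial square-zero extension $\mathbb{C} \oplus M \to \mathbb{C}$ corresponds via Theorem \ref{sqzerotoinf} to the zero derivation, so it fits into a homotopy pullback $\mathbb{C} \oplus M \simeq \mathbb{C} \times^h_{\mathbb{C} \oplus M[1]} \mathbb{C}$ whose two structure maps are isomorphisms on $\pi_0$; infinitesimal cohesiveness then yields the loop equivalence
$$\hat{\underline{F}}_\xi(\mathbb{C} \oplus M) \simeq \Omega_\xi \hat{\underline{F}}_\xi(\mathbb{C} \oplus M[1]).$$
Applied to $M = \mathbb{C}$, this already recovers $\pi_1 \hat{\underline{F}}_\xi(\mathbb{C} \oplus \mathbb{C}[1]) \simeq H^1(X_0, T_{X_0})$ and $\pi_2 \hat{\underline{F}}_\xi(\mathbb{C} \oplus \mathbb{C}[1]) \simeq H^0(X_0, T_{X_0})$ from the classical tangent-space and automorphism identifications; but the loop equivalence leaves $\pi_0$ completely untouched, since iterating only shifts the question to higher homotopy groups of further-shifted modules.

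To capture $\pi_0$ I would invoke the derived deformation formula
$$\hat{\underline{F}}_\xi(\mathbb{C} \oplus M) \simeq \tau_{\geq 0}\bigl(\mathbb{T}_\xi \underline{F} \otimes_{\mathbb{C}} M\bigr),$$
valid for every connective $\mathbb{C}$-module $M$, together with the identification of the tangent complex $\mathbb{T}_\xi \underline{F} \simeq R\Gamma(X_0, T_{X_0})[1]$. Substituting $M = \mathbb{C}[1]$ then gives $\hat{\underline{F}}_\xi(\mathbb{C} \oplus \mathbb{C}[1]) \simeq \tau_{\geq 0}\bigl(R\Gamma(X_0, T_{X_0})[2]\bigr)$, whose $\pi_0$ is exactly $H^2(X_0, T_{X_0})$. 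The main obstacle is establishing these last two inputs: reading off $H^2(X_0, T_{X_0})$ as $\pi_{-1} \mathbb{T}_\xi \underline{F}$ requires derived Kodaira--Spencer theory and a tangent-cotangent complex formalism for maps of derived Artin stacks, which lies beyond what is developed in these notes and must be cited from \cite{DAG-IX} or \cite{Lurie_Higher_algebra}. Granted those inputs, the remainder of the argument is purely formal and uses only Theorem \ref{sqzerotoinf} together with the infinitesimal cohesiveness of $\underline{F}$.
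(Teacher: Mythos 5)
Your proposal is correct and, once the warm-up is set aside, takes essentially the same route as the paper: it reduces $\hat{\underline{F}}_\xi(\mathbb C \oplus \mathbb C[1])$ to the tangent complex $\mathbb T_{\underline{F},\xi} \simeq \mathbb R\Gamma(X_0, T_{X_0})[1]$ (the paper phrases the same reduction via $\pi_0\underline{\mathrm{Der}}_{\underline{F}}(\xi;\mathbb C[1]) \simeq \mathrm{Ext}^1(\mathbb L_{\underline{F},\xi},\mathbb C)$, which is the dual formulation of your $\tau_{\ge 0}(\mathbb T_\xi\underline{F}\otimes M)$) and then reads off $H^2$ by the shift. The preliminary observation that infinitesimal cohesiveness gives the loop equivalence $\hat{\underline{F}}_\xi(\mathbb C \oplus M) \simeq \Omega_\xi \hat{\underline{F}}_\xi(\mathbb C \oplus M[1])$ is a correct and pleasant consistency check but, as you note, plays no role in pinning down $\pi_0$; the paper omits it.
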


\begin{proof}
First of all, $\underline{F}$ has a cotangent complex at $\xi$ in the sense of \cite[Definition 1.4.1.5]{HAG-II} and it can be shown that
\[
\mathbb T_{\underline{F},\xi} \simeq \mathbb R\Gamma(\mathbb T_{X_0}[1])
\]
Using \cite[Proposition 1.4.1.6]{HAG-II} we obtain
\[
\mathbb L_{\underline{F}, \xi} \simeq \mathbb T_{\underline{F},\xi}^* \simeq \mathbb R \Gamma(X_0, \mathbb L_{X_0}[-1])
\]
Therefore
\begin{align*}
\pi_0(\underline{\mathrm{Der}}_{\underline{F}}(\xi ; \mathbb C[1])) & \simeq \pi_0( \mathbb R \underline{\Hom}_{\mathbb C}(\mathbb L_{\underline{F},\xi}, \mathbb C[1])) \\
& \simeq \mathrm{Ext}^0(\mathbb L_{\underline{F},\xi}, \mathbb C[1]) \\
& = \mathrm{Ext}^1(\mathbb L_{\underline{F},\xi}, \mathbb C) \\
& = \mathrm{Ext}^0(\mathbb L_{\underline{F},\xi}[-1], \mathbb C) \\
& = \mathbb T_{\underline{F},\xi}[1] \simeq \mathbb R \Gamma(X_0, \mathbb T_{X_0}[2])
\end{align*}
since $X_0$ is smooth, we obtain $\mathbb T_{X_0} \simeq T_{X_0}$, so that
\[
\mathbb R \Gamma(X_0, \mathbb T_{X_0}[2]) \simeq H^2(X_0, T_{X_0})
\]
In conclusion
\begin{align*}
\pi_0(\hat{\underline{F}}_\xi(\mathbb C \oplus \mathbb C[1])) & \simeq \pi_0 (\hofib (\underline{F}(\mathbb C \oplus \mathbb C[1]) \to \underline{F}(\mathbb C), \xi)) \\
& \simeq \pi_0(\underline{\mathrm{Der}}_{\underline{F}}(\xi ; \mathbb C[1])) \simeq H^2(X_0,T_{X_0})
\end{align*}
\end{proof}

Now that we have a derived deformation interpretation of $H^2(X_0,T_{X_0})$ at hand, we can proceed by answering Question 2 above. We begin by the following

\begin{lem}
Let
\[
\xymatrix{
I \ar[r] & A' \ar[r]^f & A
}
\]
be a square zero extension of (augmented) artinian $\mathbb C$-algebras (i.e. $I^2 = 0$). Then there exist a derivation $d \colon A \to A \oplus I[1]$ and a homotopy cartesian diagram
\[
\xymatrix{
A' \ar[r] \ar[d] & A \ar[d]^{\pi \circ d} \\
\mathbb C \ar[r] & \mathbb C \oplus I[1]
}
\]
where $\pi \colon A \oplus I[1] \to \mathbb C \oplus I[1]$ is the natural map induced by the augmentation $A \to \mathbb C$.
\end{lem}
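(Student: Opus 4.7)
The approach is to apply Theorem \ref{sqzerotoinf} to realize the classical square-zero extension $f \colon A' \to A$ as a derived infinitesimal extension, and then descend the resulting homotopy pullback along the augmentation $\epsilon \colon A \to \mathbb{C}$.

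First, I would observe that a classical square-zero extension of augmented artinian $\mathbb{C}$-algebras is a $0$-square-zero extension in the sense of Definition \ref{sqzero}, so Theorem \ref{sqzerotoinf} produces a derived $\mathbb{C}$-derivation $d \colon A \to A \oplus I[1]$, unique up to homotopy in $\mathrm{Ho}(\salg/A)$, together with a homotopy pullback
$$
\xymatrix{
A' \ar[r] \ar[d] & A \ar[d]^-{\varphi_0} \\
A \ar[r]_-d & A \oplus I[1]
}
$$
in $\mathrm{Ho}(\salg)$, where $\varphi_0$ is the zero section. This $d$ will serve as the derivation in the statement, and I would take the bottom map of the desired square to be the zero section $0 \colon \mathbb{C} \to \mathbb{C} \oplus I[1]$ and the left vertical to be $\epsilon \circ f$.

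Next, I would check that the desired square commutes up to homotopy. Post-composing the homotopy $d \circ f \simeq \varphi_0 \circ f$ coming from the pullback above with $\pi$ yields
$$
\pi \circ d \circ f \simeq \pi \circ \varphi_0 \circ f = 0 \circ \epsilon \circ f,
$$
where the last identity uses that $\pi \circ \varphi_0 = 0 \circ \epsilon$ on the nose. By the universal property of the homotopy pullback, this produces a canonical comparison map $\alpha \colon A' \to P$, where $P := \mathbb{C} \times^h_{\mathbb{C} \oplus I[1]} A$ is the homotopy pullback of the diagram in the statement.

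The main obstacle is showing that $\alpha$ is a weak equivalence. My plan is to exploit that $\alpha$ is a morphism over $A$ and to compare the homotopy fibers of $f \colon A' \to A$ and of the projection $P \to A$ at the basepoint. The first fiber is equivalent to $I$ by Lemma \ref{lemma eilenberg maclane}; the second, by preservation of homotopy fibers under homotopy pullbacks, coincides with the homotopy fiber of the zero section $0 \colon \mathbb{C} \to \mathbb{C} \oplus I[1]$ at $(0,0)$, which is $\Omega(\mathbb{C} \oplus I[1]) \simeq I$. A careful tracing of the construction of $\alpha$ shows that the induced map $I \to I$ on fibers is the identity, and the long exact sequence of homotopy groups then forces $\alpha$ to be a weak equivalence.
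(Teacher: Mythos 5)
Your proof diverges from the paper's after the common first step (invoking Theorem \ref{sqzerotoinf} to produce the homotopy pullback $A' \simeq A \times^h_{A\oplus I[1]} A$). The paper then simply checks that the square
\[
\xymatrix{
A \ar[d]_{\varphi_0} \ar[r] & \mathbb C \ar[d] \\
A \oplus I[1] \ar[r]_-{\pi} & \mathbb C \oplus I[1]
}
\]
is a homotopy pullback (it is a \emph{strict} pullback, and the bottom map is a fibration, so Corollary \ref{cor computing homotopy pullback} applies), and concludes by the pasting law for homotopy pullbacks. Your route instead forms $P := \mathbb C \times^h_{\mathbb C\oplus I[1]} A$, builds a comparison map $\alpha \colon A' \to P$ over $A$, and tries to prove $\alpha$ is a weak equivalence by comparing homotopy fibres. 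This is a reasonable strategy in principle, but two points need attention.

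First, a small slip: the homotopy fibre of the zero section $\mathbb C \to \mathbb C\oplus I[1]$ over $0$ is $\Omega(I[1])$, not $\Omega(\mathbb C \oplus I[1])$. This is exactly the computation in Proposition \ref{prop infinitesimal fiber}: one replaces $\mathbb C \xrightarrow{0} \mathbb C \oplus I[1]$ by $0 \to I[1]$ via a strict-pullback-along-a-fibration argument and then loops. Your expression happens to evaluate to $I$ too (since $\Omega(\mathbb C)\simeq 0$), but the formula is not the right one and suggests a confusion about which object is being looped. Second, and more importantly, the assertion that ``a careful tracing of the construction of $\alpha$ shows that the induced map $I \to I$ on fibres is the identity'' is the entire substance of your argument and is left unjustified. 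Showing that this map is an isomorphism (the identity is more than you need) is precisely as hard as showing the stated square is a homotopy pullback; if you unwind the identifications of both fibres with $I$ you will find yourself essentially re-proving the pasting step the paper uses. The pasting argument is both shorter and avoids any need to track maps between fibres, so you should adopt it: verify the auxiliary square above is a strict pullback, observe the right-hand vertical map is a fibration, and paste with the square from Theorem \ref{sqzerotoinf}.
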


\begin{proof}
Use Theorem \ref{sqzerotoinf} to deduce the existence of a derivation $d \colon A \to A \oplus I[1]$ such that
\[
\xymatrix{
A' \ar[r] \ar[d] & A \ar[d]^{\varphi_0} \\
A \ar[r]^-{d} & A \oplus I[1]
}
\]
is a homotopy pullback. We are left to show that
\[
\xymatrix{
A \ar[d]_{\varphi_0} \ar[d] \ar[r] & \mathbb C \ar[d] \\
A \oplus I[1] \ar[r] & \mathbb C \oplus I[1]
}
\]
is a homotopy pullback. However, the map $A \oplus I[1] \to \mathbb{C} \oplus I[1]$ is a fibration, hence it is sufficient to show that it is a pullback, and this is straightforward verification.
\end{proof}

Let us consider, in particular, the (discrete) square-zero extension $A'=\mathbb C[s]/(s^3) \to \mathbb C[s]/(s^2)=A$. By the previous Lemma, we obtain a homotopy pullback
\[
\xymatrix{
\mathbb C[s]/(s^3) \ar[r] \ar[d] & \mathbb C[s]/(s^2) \ar[d]^-{p} \\
\mathbb C \ar[r]_-{q} & \mathbb C \oplus \mathbb C[1]
}
\]
Since $F$ is infinitesimally cohesive, and $q: \mathbb{C} \to \mathbb C \oplus \mathbb C[1]$ and $p: \mathbb C[s]/(s^2) \to \mathbb C \oplus \mathbb C[1]$, are surjective on $\pi_0$ with nilpotent kernels, we get a homotopy pullback of simplicial sets
\[
\xymatrix{
F(\mathbb C[s]/(s^3))= \underline{F}(\mathbb C[s]/(s^3)) \ar[r] \ar[d] & \underline{F}(\mathbb C[s]/(s^2))= F(\mathbb C[s]/(s^2)) \ar[d] \\
F(\mathbb C)=\underline{F}(\mathbb C) \ar[r] & \underline{F}(\mathbb C \oplus \mathbb C[1])
}
\]
Via augmentations, this diagram maps to $F(\mathbb C)$, and if we take the four corresponding homotopy fibers at $\xi$, we get a homotopy pullback of pointed simplicial sets

\[
\xymatrix{
\hat{F}_\xi(\mathbb C[s]/(s^3))= \hat{\underline{F}}_\xi(\mathbb C[s]/(s^3)) \ar[r] \ar[d] & \hat{\underline{F}}_\xi(\mathbb C[s]/(s^2))= \hat{F}_\xi (\mathbb C[s]/(s^2)) \ar[d] \\
\bullet = \hat{F}_\xi (\mathbb{C})=\hat{\underline{F}}_\xi (\mathbb C) \ar[r] & \hat{\underline{F}}_{\xi} (\mathbb{C} \oplus \mathbb{C} [1]) }
\]

i.e. a fiber sequence of pointed simplicial sets
\[
\hat{F}_\xi(\mathbb C[s]/(s^3)) \to \hat{F}_\xi(\mathbb C[s]/(s^2)) \to \hat{\underline{F}}_\xi(\mathbb C \oplus \mathbb C[1])
\]
Then, Proposition \ref{prop deformation} allows then to write the long exact sequence:
\[
\pi_0(\hat{F}_\xi(\mathbb C[s]/(s^3))) \to \pi_0(\hat{F}_\xi(\mathbb C[s]/(s^2))) \to \pi_0(\hat{\underline{F}}_\xi(\mathbb C \oplus \mathbb C[1])) \simeq H^2(X_0,T_{X_0})
\]
of pointed sets (note that the middle and the rightmost ones are vector spaces). As a consequence, we see that a first order deformation extends to a second order deformation if and only if its image in $H^2(X_0,T_{X_0})$ vanishes. In other words, the space $\mathsf{Obs}$ of all obstructions is given by the image of the obstruction map $$\mathrm{obs}: \pi_0(\hat{F}_\xi(\mathbb C[s]/(s^2))) \to \pi_0(\hat{\underline{F}}_\xi(\mathbb C \oplus \mathbb C[1])) \simeq H^2(X_0,T_{X_0}). $$
We have therefore answered Question 2, too.\\

\noindent \textbf{Exercise.}  Extend the previous arguments to higher order infinitesimal deformations and obstructions.

\appendix

\section{Homotopical nonsense}

\subsection{Homotopy pullbacks}

The first technique we want to recall is how to compute homotopy pullbacks in a general model category. Recall first of all the following result:

\begin{prop} \label{prop right proper}
Let $\mathcal M$ be a right proper model category. If we have a diagram
\[
\xymatrix{
X \ar[r]^g & Z & Y \ar[l]_h
}
\]
where at least one of $g$ and $h$ is a fibration, then the pullback $X \times_Z Y$ is naturally weakly equivalent to the homotopy pullback.
\end{prop}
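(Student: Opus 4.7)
The plan is to reduce the statement to the defining property of right properness, namely that weak equivalences are preserved under pullback along fibrations. Without loss of generality I may assume that $g \colon X \to Z$ is the fibration. To identify the strict pullback $X\times_Z Y$ with the homotopy pullback, I will replace $h \colon Y \to Z$ by a fibration.

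First, factor $h$ functorially as a trivial cofibration $j \colon Y \xrightarrow{\sim} Y'$ followed by a fibration $h' \colon Y' \twoheadrightarrow Z$. Since both $g$ and $h'$ are now fibrations between (not necessarily fibrant) objects, a standard check shows that the strict pullback $X\times_Z Y'$ computes the homotopy pullback of the original diagram: indeed, replacing one leg by a weakly equivalent fibration gives a model for the homotopy pullback in any model category, and this can be verified by comparing with the functorial construction using a path object, or directly from a Reedy-type argument on the cospan diagram category.

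Next, form the commutative square
\[
\xymatrix{
X\times_Z Y \ar[r] \ar[d] & X\times_Z Y' \ar[d] \\
Y \ar[r]_-{j} & Y'
}
\]
This square is a pullback (both inner and outer rectangles being pullbacks along $g$), the bottom horizontal map $j$ is a weak equivalence, and the right-hand vertical map, being the pullback of the fibration $g$ along $h'$, is again a fibration. Right properness then implies that the top horizontal map $X\times_Z Y \to X\times_Z Y'$ is a weak equivalence, which is exactly the desired identification of the strict pullback with the homotopy pullback.

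There is no real obstacle here; the only subtle step is the justification that $X\times_Z Y'$ models the homotopy pullback when both legs are fibrations, which is standard but needs to be stated. Naturality follows from the functoriality of the chosen factorization and the universal property of pullbacks.
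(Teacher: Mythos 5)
Your overall strategy — factor the other leg as a trivial cofibration followed by a fibration, then compare the two strict pullbacks using right properness — is precisely the argument behind the reference the paper defers to (Hirschhorn, Corollary 13.3.8). The second half of your proof is correct and cleanly executed: the square exhibiting $X\times_Z Y$ as the pullback of $X\times_Z Y'\to Y'$ along $j$, the observation that $X\times_Z Y'\to Y'$ is a fibration (being a base change of $g$ along $h'$), and the appeal to right properness all work exactly as you say.

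The gap is in the first half, where you claim that the strict pullback $X\times_Z Y'$ of two fibrations over a possibly non-fibrant $Z$ computes the homotopy pullback \emph{in any model category}. That is false without further hypotheses: the identifications you gesture at (path object, or Reedy fibrancy of the cospan) require the vertex $Z$ to be fibrant, which is not assumed here. In a general model category, computing the homotopy limit of a cospan also requires replacing $Z$ by a fibrant object, and having two fibration legs over a non-fibrant base is not by itself enough. What makes the claim true in the present situation is, once again, right properness: in a right proper model category one may define the homotopy pullback (this is Hirschhorn \S 13.3) by factoring \emph{both} legs as (weak equivalence, fibration) and taking the strict pullback, with right properness guaranteeing this is well defined up to weak equivalence. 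With that definition in hand, $X\times_Z Y'$ is a legitimate model essentially by construction (use the trivial factorization $g = g\circ\mathrm{id}_X$ for the leg that is already a fibration), and your second step then finishes the proof. So the architecture is right, but the justification of the first step needs to invoke right properness (or Hirschhorn's right-proper definition of the homotopy pullback) rather than a claim that holds in an arbitrary model category.
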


\begin{proof}
See \cite[Corollary 13.3.8]{Hirschhorn_Model_2003}
\end{proof}

We can obtain a similar result for general model categories adding the hypothesis that every object $X$, $Y$ and $Z$ is fibrant. To see this, recall first of all the following proposition:

\begin{prop} \label{prop right proper 2}
Let $\mathcal M$ be a model category and let
\[
\xymatrix{
A \ar[d] \ar[r]^u & X \ar[d]^p \\ B \ar[r]^w & Y
}
\]
be a pullback. If $p$ is a fibration and $w$ is a weak equivalence between fibrant objects, then $u$ is a weak equivalence.
\end{prop}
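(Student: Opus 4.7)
The plan is to factor $w$ into better-behaved pieces and reduce to a case where a direct path-object argument applies. I would split the pullback square into two by passing through the intermediate object produced by a factorization of $w$, handle one square by a stability property of trivial fibrations, and treat the remaining square by exploiting the existence of a retraction and a right homotopy.

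First, I would factor $w\colon B\to Y$ as $w = q\circ i$ with $i\colon B\to W$ a trivial cofibration and $q\colon W\to Y$ a fibration. The 2-out-of-3 property forces $q$ to be a trivial fibration; since $Y$ is fibrant and $q$ is a fibration, $W$ is fibrant as well. Now form the two-square decomposition
\[
\xymatrix{
A \ar[r] \ar[d] & X' \ar[r]^{q'} \ar[d]_{p'} & X \ar[d]^{p} \\
B \ar[r]^{i} & W \ar[r]^{q} & Y
}
\]
where each small square is a strict pullback (so the outer rectangle recovers the original pullback). The right-hand square shows that $q'$ is the pullback of the trivial fibration $q$ along $p$, hence $q'$ is itself a trivial fibration, and in particular a weak equivalence. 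By 2-out-of-3 applied to $u = q'\circ(A\to X')$, it suffices to prove that the left-hand square produces a weak equivalence $A\to X'$. Note that $p'$ is a pullback of $p$, so it is again a fibration.

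We are thus reduced to the following special case: the pullback of a trivial cofibration $i\colon B\to W$ between fibrant objects along a fibration $p'\colon X'\to W$ is a weak equivalence. The main step of the proof is handled here. Since $i$ is a trivial cofibration and $B$ is fibrant, lifting $\mathrm{id}_B$ against $i$ produces a retraction $r\colon W\to B$ with $r\circ i = \mathrm{id}_B$. Since $W$ is fibrant, choose a path object $W \xrightarrow{s} W^I \xrightarrow{(d_0,d_1)} W\times W$ with $(d_0,d_1)$ a fibration. The pair $(i\circ r,\mathrm{id}_W)\colon W\to W\times W$ lifts through $(d_0,d_1)$ along $i$ (using that $s\circ i\colon B\to W^I$ covers $(i\circ r\circ i,\, i) = (i,i)$ after applying $(d_0,d_1)$), producing a right homotopy $H\colon W\to W^I$ from $i\circ r$ to $\mathrm{id}_W$.

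Finally, I would use $H$ and the fibration $p'$ to build a map $X'\to A$ inverse to $A\to X'$ up to homotopy: lifting $H\circ p'\colon X'\to W^I$ along $p'$ (via the fibration property of $p'$) yields a homotopy in $X'$ whose endpoint $\sigma\colon X'\to X'$ satisfies $p'\circ\sigma = i\circ r\circ p'$, and hence pairs with $r\circ p'\colon X'\to B$ to define a map $X'\to A = B\times_W X'$. The homotopy produced by the lift together with the identities $r\circ i = \mathrm{id}_B$ give the two compositions as homotopic to the identities on $A$ and $X'$ respectively, so $A\to X'$ is a weak equivalence. The main technical obstacle is the bookkeeping in this last construction, making sure the lifts of $H$ through $p'$ are compatible with the universal property of the pullback; the argument is carried out in detail in \cite[Proposition 13.1.2]{Hirschhorn_Model_2003}, which one can simply invoke.
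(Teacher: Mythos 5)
Your proof is correct, and it is an unzipping of exactly the argument the paper points to: the paper's ``proof'' consists entirely of citing Reedy's argument as recorded in \cite[Proposition 13.1.2]{Hirschhorn_Model_2003} (with \cite[\S II.8]{Goerss_Jardine_Simplicial_1999} as an alternative, lifting-free route). What you add is a genuine reduction step and a sketch of the hard part. Your reduction is sound: factoring $w = q \circ i$ and splitting the pullback into two squares correctly handles the $q$-square by stability of trivial fibrations under pullback, and passes the $i$-square the hypotheses it needs ($W$ is fibrant because $q$ is a fibration with fibrant target; $p'$ is a fibration as a pullback of $p$; hence $X'$ is fibrant too). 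This reduction is worth making explicit because the trivial-cofibration case is strictly easier: the retraction $r \colon W \to B$ you produce by lifting $\mathrm{id}_B$ against $i$ is a \emph{strict} retraction, not merely a homotopy one, and your construction of the right homotopy $H$ from $i \circ r$ to $\mathrm{id}_W$ via the path object $W^I$ checks out ($(d_0,d_1)\circ s \circ i = (i,i) = (ir,\mathrm{id}_W)\circ i$, so the lift exists). The final step---using $p'$ to lift the homotopy $H \circ p'$ into $X'$ and feeding the resulting endpoint into the universal property of the pullback $A = B \times_W X'$---is where the real bookkeeping lives, and you honestly defer it to Hirschhorn, which is precisely what the paper itself does. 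So the two arguments take the same route; yours simply records the Reedy strategy instead of merely pointing at it.
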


\begin{proof}
There is a simple argument due to Reedy (cfr. \cite[Proposition 13.1.2]{Hirschhorn_Model_2003}), but there is also a more elaborate proof that avoid any lifting argument and therefore can be carried out in the more general context of categories of fibrant objects (see \cite[\S II.8]{Goerss_Jardine_Simplicial_1999}).
\end{proof}

\begin{cor} \label{cor computing homotopy pullback}
Let $\mathcal M$ be a model category. Suppose given a pullback diagram
\[
\xymatrix{
A \ar[r] \ar[d] & B \ar[d]^p \\ C \ar[r] & D
}
\]
where $B$, $C$ and $D$ are fibrant objects and $p$ is a fibration. Then the square is a homotopy pullback.
\end{cor}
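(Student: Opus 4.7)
My plan is to compare the given strict pullback with an explicit model of the homotopy pullback obtained by a standard factorization, and then use Proposition \ref{prop right proper 2} as the only non-formal ingredient. Factor the map $C \to D$ as
\[
C \xrightarrow{\ i\ } C' \xrightarrow{\ q\ } D,
\]
where $i$ is a trivial cofibration and $q$ is a fibration. Since $D$ is fibrant and $q$ is a fibration, $C'$ is fibrant; and since $C$ is fibrant by hypothesis, $i$ is a weak equivalence between fibrant objects. Form the strict pullback $P := C' \times_D B$. Because $q$ and $p$ are fibrations of fibrant objects over the fibrant object $D$, $P$ is a model of the homotopy pullback of $C \to D \leftarrow B$ (by the very recipe used to define the homotopy pullback via factoring one leg into a fibration).

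It therefore suffices to show that the natural map $A \to P$ induced by $i$ on the first factor is a weak equivalence. For this I would verify that the diagram
\[
\xymatrix{
A \ar[r] \ar[d] & P \ar[d] \\
C \ar[r]^-{i} & C'
}
\]
is itself a strict pullback; this is an immediate universal-property check using $A = C \times_D B$ and $P = C' \times_D B$, and is where the hypothesis that the original square is a strict pullback is used. The right-hand vertical map $P \to C'$ is a fibration (being a pullback of the fibration $p$), and the bottom map $i$ is a weak equivalence between fibrant objects. Proposition \ref{prop right proper 2} thus applies and yields that $A \to P$ is a weak equivalence, so the original square is a homotopy pullback.

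The only subtle point in the plan is that one must be careful about \emph{which} statement is used to identify $P$ with the homotopy pullback, since $\mathcal M$ is not assumed to be right proper and Proposition \ref{prop right proper} is therefore not directly available. The hypothesis that $B$, $C$, $D$ are fibrant is exactly what is needed to bypass right properness, via Proposition \ref{prop right proper 2}; after that, the argument is entirely a matter of standard factorization and a one-line universal-property verification.
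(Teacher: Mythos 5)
Your proof is correct and takes essentially the same route as the paper. The paper simply notes that the argument for Proposition \ref{prop right proper} carries over once right properness is replaced by Proposition \ref{prop right proper 2} (available here because all objects are fibrant); your proof is exactly that argument spelled out --- factor one leg into a trivial cofibration followed by a fibration, compare the two strict pullbacks, and invoke Proposition \ref{prop right proper 2} to see that the comparison map is a weak equivalence.
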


\begin{proof}
The same proof of Proposition \ref{prop right proper} applies, because the only needed fact is the stability of weak equivalences under pullback by fibrations, and this is guaranteed by Proposition \ref{prop right proper 2}.
\end{proof}

We conclude describing the ``universal homotopy mapping property'' of the pullback that everyone could imagine (but for which we don't have any written reference):

\begin{cor} \label{cor uhmp}
Let $\mathcal M$ be a model category and let
\[
\xymatrix{
A \ar[d]_{f'} \ar[r]^{g'} & B \ar[d]^f \\ C \ar[r]_g & D
}
\]
be a homotopy pullback in $\mathcal M$. If $X$ is a cofibrant object and $\alpha \colon X \to B$, $\beta \colon X \to C$ are morphisms such that $f \circ \alpha \simeq g \circ \beta$, then there is a map $\gamma \colon X \to A$ such that $g' \circ \gamma \simeq \alpha$ and $f' \circ \gamma \simeq \beta$. Moreover, such $\gamma$ is unique up to homotopy.
\end{cor}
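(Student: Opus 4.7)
The plan is to reduce the statement to the case of a strict pullback along a fibration between fibrant objects, and then to produce $\gamma$ by a single application of the trivial-cofibration/fibration lifting property. First I would invoke the construction underlying Corollary~\ref{cor computing homotopy pullback}: by standard model-categorical machinery (factor $f$ as a trivial cofibration followed by a fibration, then fibrantly replace $B,C,D$) the given square is weakly equivalent to one in which $B$, $C$, $D$ are fibrant, the map $f\colon B\to D$ is a fibration, and $A=B\times_D C$ is the strict pullback with $g'=\mathrm{pr}_B$, $f'=\mathrm{pr}_C$. Since $X$ is cofibrant and all four corners are fibrant, $[X,-]$ carries weak equivalences to bijections (Whitehead), so both the existence and the uniqueness statement for $\gamma$ transport across this replacement; hence I would work with the strict-pullback model throughout.

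For existence, using cofibrancy of $X$ I would choose a cylinder object $X\sqcup X\xrightarrow{(i_0,i_1)} X\otimes I\xrightarrow{\sim} X$ with $i_0$ a trivial cofibration. The homotopy $f\alpha\simeq g\beta$ is then represented by a left homotopy $H\colon X\otimes I\to D$ with $Hi_0=f\alpha$ and $Hi_1=g\beta$. The lifting property of the pair $(i_0,f)$, applied to the square with upper edge $\alpha$ and lower edge $H$, produces $\tilde H\colon X\otimes I\to B$ satisfying $\tilde H i_0=\alpha$ and $f\tilde H=H$. Setting $\alpha':=\tilde H i_1\colon X\to B$ gives $\alpha'\simeq\alpha$ via $\tilde H$ and $f\alpha'=g\beta$ on the nose, so the strict-pullback universal property defines $\gamma:=(\alpha',\beta)\colon X\to A$, and by construction $g'\gamma=\alpha'\simeq\alpha$ and $f'\gamma=\beta$.

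For uniqueness, suppose $\gamma_1,\gamma_2\colon X\to A$ are two such lifts. Concatenating the given homotopies $g'\gamma_i\simeq\alpha$ and $f'\gamma_i\simeq\beta$ produces left homotopies $K_B\colon g'\gamma_1\simeq g'\gamma_2$ in $B$ and $K_C\colon f'\gamma_1\simeq f'\gamma_2$ in $C$. I would then repeat the existence argument one dimension higher: viewing $X\otimes I$ as a cofibrant source with boundary $X\sqcup X$ (where we already know $\gamma_1,\gamma_2$), and applying the lifting property again relative to this boundary, one adjusts $K_B$ so that $fK_B=gK_C$ holds strictly, after which the pair $(K_B,K_C)$ factors uniquely through $A=B\times_D C$ and gives the required homotopy $\gamma_1\simeq\gamma_2$.

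The step I expect to be the main obstacle is the uniqueness argument, because it secretly requires matching the two induced homotopies $fK_B$ and $gK_C$ in $D$; this is a subtler, relative lifting problem on the cylinder. The way out, and the technical crux of the proof, is to iterate the very same trivial-cofibration/fibration lifting trick with $X\otimes I$ (equipped with its boundary inclusion $X\sqcup X\hookrightarrow X\otimes I$, which is still a cofibration out of a cofibrant object) in place of $X$, thereby reducing uniqueness to a relative instance of the existence argument already performed.
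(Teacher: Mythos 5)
Your existence argument matches the paper's: both reduce to a strict pullback along a fibration between fibrant objects, choose a cylinder on the cofibrant $X$, represent the homotopy $f\alpha \simeq g\beta$ by a left homotopy $H$, lift $H$ against the fibration through a trivial cofibration $i_0$ (or $i_1$), and then appeal to the universal property of the strict pullback. The only cosmetic difference is that you lift once (through $f$, adjusting $\alpha$ to $\alpha'$ so that $f\alpha' = g\beta$ on the nose), whereas the paper lifts through both $f$ and $g$ and assembles the two lifts into a path $\delta\colon \mathrm{Cyl}(X)\to A$; the resulting $\gamma$ agrees up to homotopy.

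The uniqueness sketch, however, has a genuine gap. You propose to adjust $K_B$ by a lift against $f$ relative to the boundary inclusion $X \sqcup X \hookrightarrow X\otimes I$. But that inclusion is merely a cofibration (it is a \emph{trivial} cofibration only when $X$ is weakly contractible), and $f$ is merely a fibration; neither lifting axiom applies, so the required lift need not exist, and the obstruction you yourself flag (matching $fK_B$ and $gK_C$ in $D$) does not go away. Indeed the uniqueness assertion of the corollary is not correct as stated: take $B = C = *$, so that $A$ is a model for the based loop object $\Omega D$, and take $X = *$. Then the hypotheses are vacuous, yet $[X,A] \cong \pi_0(\Omega D) \cong \pi_1(D)$, which is generally nontrivial, so $\gamma$ is far from unique up to homotopy. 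The correct statement is that $\gamma$ is determined up to homotopy once a \emph{specific} homotopy $H\colon f\alpha \simeq g\beta$ has been chosen; without fixing $H$ there is a $\pi_1\Map(X,D)$'s worth of ambiguity. The paper's own proof also dismisses uniqueness with a one-line remark, but in the body of the paper only the existence clause of this corollary is ever used, so the imprecision is harmless there.
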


\begin{proof}
We can assume $B,C$ and $D$ to be fibrant and the maps $f$ and $g$ to be fibrations. In this case, use the cofibrancy of $X$ to choose a cylinder object 
\[
\xymatrix{
X \sqcup X \ar[r]^{(i_0,i_1)} & \mathrm{Cyl}(X) \ar[r]^w & X
}
\]
for $X$ and a homotopy $H \colon \mathrm{Cyl}(X) \to D$ such that
\[
\xymatrix{
X \ar[d]_{i_0} \ar@/^.5pc/[dr]^{f \circ \alpha} \\ \mathrm{Cyl}(X) \ar[r]^-H & D \\
X \ar[u]^{i_1} \ar@/_.5pc/[ur]_{g \circ \beta}
}
\]
commutes. The liftings in the diagrams
\[
\xymatrix{
X \ar[r]^\beta \ar[d]_{i_1} & C \ar[d]^g \\ \mathrm{Cyl}(X) \ar@{.>}[ur]^{K_1} \ar[r]_H & D
}, \qquad
\xymatrix{
X \ar[r]^\alpha \ar[d]_{i_0} & B \ar[d]^f \\ \mathrm{Cyl}(X) \ar@{.>}[ur]^{K_2} \ar[r]_H & D
}
\]
exist because $i_0$ and $i_1$ are trivial cofibrations, while $f$ and $g$ are fibrations by assumption. In particular we get
\[
f \circ K_2 = H = g \circ K_1
\]
which produces a unique map $\delta \colon X \times I \to A$. Set
\[
\gamma := \delta \circ i_0
\]
We therefore have
\[
g' \circ \gamma = g' \circ \delta \circ i_0 = K_2 \circ i_0 = \alpha
\]
and
\[
f' \circ \gamma = f' \circ \delta \circ i_0 = K_1 \circ i_0 \simeq K_1 \circ i_1 = \beta
\]
The uniqueness up to homotopy of $\gamma$ is easily seen with a similar construction.
\end{proof}

\subsection{Homotopy fibres}

\begin{prop} \label{prop homotopy fibre}
Let $\mathcal M$ be a pointed model category and let
\[
\xymatrix{
A \ar[r]^\alpha \ar[d]_f & B \ar[d]^g \\ C \ar[r]^\beta & D
}
\]
be a given homotopy pullback. Then $\hofib f \simeq \hofib g$.
\end{prop}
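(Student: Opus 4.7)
The plan is to use the pasting lemma for homotopy pullbacks, extending the given square to the left by a homotopy pullback square that defines $\hofib f$. Concretely, I would set up the diagram
\[
\xymatrix{
P \ar[r] \ar[d] & A \ar[r]^\alpha \ar[d]_f & B \ar[d]^g \\
* \ar[r] & C \ar[r]^\beta & D
}
\]
where the left square is the homotopy pullback computing $P := \hofib f$ over the basepoint $* \to C$, and the right square is the assumed homotopy pullback. The pasting lemma for homotopy pullbacks then identifies the outer rectangle with a homotopy pullback, and since $\mathcal M$ is pointed the bottom composite $* \to C \xrightarrow{\beta} D$ factors uniquely through the zero object as the zero map $* \to D$. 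Consequently the outer rectangle computes precisely $\hofib g$, giving the desired equivalence $\hofib f \simeq \hofib g$.

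To implement this in a strict model-categorical way, I would first functorially replace the cospan $C \xrightarrow{\beta} D \xleftarrow{g} B$ by one in which $g$ (and $\beta$) is a fibration between fibrant objects, so that Corollary \ref{cor computing homotopy pullback} lets me model the homotopy pullback $A$ by the strict pullback $C \times_D B$. Similarly, I would model $*$ by a fibrant replacement of the zero object mapped in via the composite $* \to C$, turning this into a fibration, so that $P = * \times_C A$ is again a strict model for $\hofib f$. With everything now expressed as strict pullbacks along fibrations between fibrant objects, the elementary categorical pasting lemma applies directly, and Corollary \ref{cor computing homotopy pullback} again certifies the outer rectangle as a homotopy pullback.

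The main obstacle is bookkeeping rather than conceptual: one has to arrange the cofibrant/fibrant replacements so that both the left square and the right square (hence their composite) are simultaneously strict pullbacks of fibrations between fibrant objects. Once that setup is in place, the identification of the outer rectangle with $\hofib g$ is immediate from the fact that in a pointed category any map out of the zero object to a target factors uniquely, so $* \to C \to D$ and $* \to D$ coincide. This reduces the whole proof to the two already-established inputs: the existence of a good cofibrant/fibrant model for homotopy pullbacks (Corollary \ref{cor computing homotopy pullback}) and ordinary pullback pasting in $\mathcal M$.
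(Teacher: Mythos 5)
Your proposal is correct and follows essentially the same strategy as the paper's proof: replace the cospan so that $g$ (hence $f$, by base change) is a fibration between fibrant objects, represent both the given square and the fiber square as strict pullbacks via Corollary \ref{cor computing homotopy pullback}, and then paste. One small infelicity: you needn't (and shouldn't try to) turn $* \to C$ into a fibration -- in a pointed model category $*$ is already fibrant, and the strict pullback $* \times_C A$ models $\hofib f$ because $f$ is a fibration (being a pullback of the fibration $g$), not because $* \to C$ is one.
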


\begin{proof}
We can compute an explicit model for the homotopy pullback by replacing $B$, $C$ and $D$ by fibrant objects and the maps $g$ and $\beta$ by fibrations. This means that we can assume from the beginning that $g$ and $\beta$ are fibrations between fibrant objects. Then $f$ is a fibration as well and $\hofib f$ is defined to be the homotopy pullback
\[
\xymatrix{
\hofib f \ar[r] \ar[d] & A \ar[d]^f \\ {*} \ar[r] & C
}
\]
Since $C$, $*$ and $A$ are fibrant and $f$ is a fibration it follows from Corollary \ref{cor computing homotopy pullback} that the (strict) pullback of the maps $* \to C \leftarrow A$ is an explicit model for the homotopy fiber. It follows that the outer rectangle in
\[
\xymatrix{
\hofib f \ar[r] \ar[d] & A \ar[d]^f \ar[r] & B \ar[d]^g \\ {*} \ar[r] & C \ar[r] & D
}
\]
is a pullback, hence (for the same reason as above) a homotopy pullback, showing that
\[
\hofib f \simeq \hofib g
\]
\end{proof}

\section{Homotopy of Simplicial rings}

\subsection{Simplicial algebras}

Throughout this section we will denote by $k$ a fixed field and we will denote by $\salg$ the category of simplicial objects in $\mathbf{Alg}_k$. The canonical adjunction
\[
\mathrm{Sym}_k \colon \sset \leftrightarrows \salg \colon \mathcal U
\]
where $\mathcal U$ is the obvious forgetful functor satisfies the hypothesis of the transfer principle, so that we can endow $\salg$ with a model structure where
\begin{enumerate}
\item a map $f \colon A \to B$ is a weak equivalence or a fibration if and only if the map $\mathcal U(f)$ is so;
\item a map $f \colon A \to B$ is a cofibration if and only if it has the left lifting property with respect to every trivial fibration.
\end{enumerate}

We have a natural inclusion $i \colon \mathbf{Alg}_k \to \salg$ which defines a reflective subcategory. In fact one has the following:

\begin{lem} \label{lemma zero truncation}
The functor $\pi_0 \colon \salg \to \mathbf{Alg}_k$ is left adjoint to the inclusion functor $i$.
\end{lem}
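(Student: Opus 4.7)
The plan is to exhibit a natural bijection
\[
\Hom_{\mathbf{Alg}_k}(\pi_0 A, R) \;\cong\; \Hom_{\salg}(A, i R)
\]
for $A \in \salg$ and $R \in \mathbf{Alg}_k$. The first step is to identify $\pi_0 A$ (a priori just a set) with a coequalizer taken inside $\mathbf{Alg}_k$, namely the coequalizer of the face maps $d_0, d_1 \colon A_1 \rightrightarrows A_0$. The pair $(d_0, d_1)$ is reflexive, since $s_0 \colon A_0 \to A_1$ is a common section, and reflexive coequalizers in $\mathbf{Alg}_k$ are preserved by the forgetful functor to $\mathbf{Set}$. Consequently the quotient $\pi_0 A$ inherits a canonical $k$-algebra structure, and is characterized in $\mathbf{Alg}_k$ by the universal property that algebra maps $\pi_0 A \to R$ correspond bijectively to algebra maps $A_0 \to R$ that equalize $d_0$ and $d_1$.

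Next I would unwind a morphism $f \colon A \to i R$ in $\salg$. Such an $f$ is a collection of $k$-algebra maps $f_n \colon A_n \to R$ compatible with faces and degeneracies; since every structure map of $i R$ is the identity, this amounts to the conditions $f_{n-1} \circ d_i = f_n$ and $f_{n+1} \circ s_i = f_n$ for all $n$ and $i$. In particular, the $n=1$ case forces $f_0 \circ d_0 = f_0 \circ d_1$, so $f_0$ coequalizes the face maps, while induction shows that $f_n = f_0 \circ (d_1)^n$ is completely determined by $f_0$. Conversely, any algebra map $\bar f \colon A_0 \to R$ satisfying the coequalizing condition extends uniquely to a simplicial algebra map via $f_n := \bar f \circ (d_1)^n$, with the simplicial identities in $A$ ensuring compatibility.

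Combining these two observations, morphisms $A \to i R$ in $\salg$ are in natural bijection with algebra maps $A_0 \to R$ that coequalize $d_0, d_1$, which by the universal property of step one correspond to algebra maps $\pi_0 A \to R$. Naturality in both variables is then a routine check (naturality in $R$ is immediate from composition in $\mathbf{Alg}_k$; naturality in $A$ follows from the functoriality of the coequalizer construction).

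The only real point that requires care is step one: ensuring that $\pi_0 A$ inherits a $k$-algebra structure canonically and that the resulting object satisfies the expected universal property in $\mathbf{Alg}_k$ rather than merely in $\mathbf{Set}$. The reflexive-coequalizer remark is what makes this step clean and simultaneously pins down the algebra structure on $\pi_0 A$.
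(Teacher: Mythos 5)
Your proof is correct, and it takes essentially the same route as the paper — both verify the adjunction through its universal property — but your treatment of one point is more careful than the paper's. The paper constructs the unit $\eta_A \colon A \to \pi_0(A)$ and asserts, without elaboration, that $\pi_0(A)$ carries a $k$-algebra structure because the face maps are algebra morphisms; it then checks the universal property of the unit directly. You instead exhibit the hom-set bijection $\Hom_{\mathbf{Alg}_k}(\pi_0 A, R) \cong \Hom_{\salg}(A, iR)$, and — crucially — you pin down $\pi_0 A$ as the \emph{reflexive} coequalizer of $d_0, d_1 \colon A_1 \rightrightarrows A_0$ in $\mathbf{Alg}_k$, invoking the fact that the forgetful functor to $\mathbf{Set}$ preserves reflexive coequalizers. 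This is the cleanest way to see that the set-level quotient $\pi_0 A$ agrees with the algebra-level quotient (the set-theoretic equivalence relation coincides with the quotient by the ideal generated by $d_0(a) - d_1(a)$), a point the paper elides. Your unwinding of maps $A \to iR$ into a single coequalizing map $A_0 \to R$, and the observation that $f_n = f_0 \circ (d_1)^n$, is correct and gives the same bijection the paper obtains via the unit.
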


\begin{proof}
Let $A$ be any simplicial $k$-algebra and consider the $k$-algebra $\pi_0(A)$. We clearly have a morphism
\[
\eta_A \colon A \to \pi_0(A)
\]
defined by sending an $n$-simplex $a \in A_n$ into the path component of any of its vertices. The compatibility with the sum and the product is a natural consequence of the fact that the face maps of $A$ are compatible with the algebra structure (i.e. $d_n \colon A_n \to A_{n-1}$ is a morphism of $k$-algebras).

If $B$ is any discrete $k$-algebra and $\varphi \colon A \to B$ is any morphism we immediately obtain a morphism of $k$-algebras
\[
\pi_0(\varphi) \colon \pi_0(A) \to \pi_0(B) = B
\]
which moreover satisfies $\pi_0(\varphi) \circ \eta_A = \varphi$. The uniqueness of $\pi_0(\varphi)$ is clear, hence it follows that $\pi_0 \dashv i$ by the standard characterization of the adjuctions via the universal property of the unit.
\end{proof}

\subsection{Modules over simplicial rings}

Let $A \in \salg$ be a fixed simplicial $k$-algebra. The category of (simplicial) $A$-modules, denoted $A\textrm{-} \Mod$ inherits a model structure from $\salg$ using the classical result that can be found in \cite{Schwede_Shipley_Algebras_2000}. This category is naturally endowed with a forgetful functor
\[
A \textrm{-} \Mod \to \sset
\]
which is right adjoint to
\[
A[-] \colon \sset \to A \textrm{-} \mathbf{Mod}
\]

\begin{df}
Let $A$ be a simplicial $k$-algebra. For every $A$-module $M$ and any positive integer $n \ge 0$ set
\[
M[n] := M \otimes_A A[S^n]
\]
where $S^n$ is a simplicial model for the $n$-sphere.
\end{df}

If $M$ is an $A$-module, we can define its homotopy groups simply using the forgetful functor to $\sset$. With this definition one immediately obtains the following lemma:

\begin{lem} \label{lemma homotopy groups of simplicial modules}
For any $A$-module $M$ it holds
\[
\pi_n(N) \simeq \pi_0 \Map_{A \textrm{-} \Mod}(A[S^n],N)
\]
\end{lem}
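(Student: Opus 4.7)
The plan is to reduce the statement to the analogous classical fact in $\sset$ by means of the Quillen adjunction $A[-] \dashv \mathcal U$.

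First I would observe that the adjunction $A[-] \colon \sset \leftrightarrows A \textrm{-} \Mod \colon \mathcal U$ is Quillen: indeed, by construction of the model structure on $A \textrm{-} \Mod$ via transfer, fibrations and weak equivalences are created by $\mathcal U$, so $\mathcal U$ is right Quillen and $A[-]$ is left Quillen. In particular, for any cofibrant $K \in \sset$ and any fibrant $N \in A \textrm{-} \Mod$, the adjunction isomorphism
\[
\Map_{A \textrm{-} \Mod}(A[K], N) \simeq \Map_{\sset}(K, \mathcal U(N))
\]
holds as a genuine isomorphism of simplicial sets, and it computes the derived mapping space on both sides. Applying this to $K = S^n$ (which is cofibrant in $\sset$, and we may freely assume $N$ fibrant since both sides of the statement are homotopy invariant in $N$) gives
\[
\Map_{A \textrm{-} \Mod}(A[S^n], N) \simeq \Map_{\sset}(S^n, \mathcal U(N)).
\]

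Next I would invoke the classical fact that for a Kan complex $X$ with chosen basepoint $x_0$, one has $\pi_n(X, x_0) \simeq \pi_0 \Map^{x_0}_{\sset}(S^n, X)$, where the right-hand side is the component of the mapping space lying over the constant map at $x_0$. In the case at hand, $\mathcal U(N)$ is a Kan complex (since $N$ is fibrant), and the category $A \textrm{-} \Mod$ is pointed by the zero module, so the mapping space $\Map_{A \textrm{-} \Mod}(A[S^n], N)$ carries a canonical basepoint, namely the zero morphism. Under the adjunction this basepoint corresponds to the constant map $S^n \to \mathcal U(N)$ at $0 \in N_0$, which is the basepoint used to compute $\pi_n(N)$ by convention (recall $\pi_n(N) := \pi_n(\mathcal U(N), 0)$). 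Combining everything,
\[
\pi_0 \Map_{A \textrm{-} \Mod}(A[S^n], N) \simeq \pi_0 \Map_{\sset}(S^n, \mathcal U(N)) \simeq \pi_n(\mathcal U(N)) = \pi_n(N).
\]

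There is no real obstacle here; the only point requiring attention is the bookkeeping between pointed and unpointed mapping spaces, which is harmless because $A \textrm{-} \Mod$ is pointed and the unit of the adjunction carries the canonical basepoint of $\Map_{A \textrm{-} \Mod}(A[S^n], N)$ to the constant map at $0 \in \mathcal U(N)$ used in the definition of $\pi_n$.
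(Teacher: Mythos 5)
Your proposal is correct and follows essentially the same route as the paper: in both cases the key step is identifying the $A\textrm{-}\Mod$ mapping space with the $\sset$ mapping space via the enriched free/forgetful adjunction, after which one appeals to the definition of $\pi_n$ for (fibrant) simplicial sets. The paper phrases this by exhibiting $M \otimes K := M \otimes_A A[K]$ as the tensor of the simplicial model structure on $A\textrm{-}\Mod$ (citing Quillen), whereas you phrase it directly through the Quillen adjunction $A[-] \dashv \mathcal U$; these are the same observation. One small caution: a Quillen adjunction by itself gives an isomorphism of hom-sets and a weak equivalence of derived mapping spaces, but to obtain the \emph{isomorphism of simplicial mapping spaces} you are using, you really need the adjunction to be simplicially enriched -- which is exactly what the simplicial model structure the paper invokes provides. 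Your final remark about basepoint bookkeeping is at the same level of (in)formality as the paper's "the thesis follows by definition": strictly speaking, for the statement to read correctly one should take $A[S^n]$ to be the \emph{reduced} free module on the pointed sphere (so that the relevant mapping space is pointed), and the paper is implicitly doing the same.
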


\begin{proof}
One has to observe that setting $M \otimes K := M \otimes_A A[K]$ for any $A$-module $M$ and any simplicial set $K$ define a tensor over $\sset$ which is in fact part of a simplicial model structure over $A \textrm{-} \Mod$ (see for example \cite[Chapter II.4]{Quillen_Homotopical_1967}). It follows that
\[
\Map_{A \textrm{-} \Mod}(A[S^n],N) \simeq \Map_{\sset}(S^n, N)
\]
and now the thesis follows by definition of $\pi_n(N)$.
\end{proof}

Since $A \textrm{-} \Mod$ is a pointed model category, it follows that we can define a suspension and a loop functor. More precisely, we consider the following definition:

\begin{df}
Let $M$ be an $A$-module. The suspension of $M$ is defined to be the homotopy pushout
\[
\xymatrix{
M \ar[r] \ar[d] & 0 \ar[d] \\ 0 \ar[r] & \Sigma(M)
}
\]
\end{df}

We define the loop functor in a similar way:

\begin{df}
Let $M$ be an $A$-module. The loop of $M$ is defined to be the homotopy pullback
\[
\xymatrix{
\Omega(M) \ar[r] \ar[d] & 0 \ar[d] \\ 0 \ar[r] & M
}
\]
\end{df}

With these definitions, we can prove that $A \textrm{-} \Mod$ is ``almost stable'', in the sense that $\Sigma$ is not an equivalence, but $\Omega \Sigma(M) \simeq M$ for any simplicial module $M$. The result is essentially due to Quillen, see \cite[Proposition II.6.1]{Quillen_Homotopical_1967}. We will need a preliminary result on the form of cofibrations of $A \textrm{-} \Mod$.

\begin{df}
A map $f \colon M \to N$ in $A \textrm{-} \Mod$ is said to be \emph{free} if there are subsets $C_q \subset N_q$ for each $q \in \N$ such that:
\begin{enumerate}
\item $\eta^* C_p \subseteq C_q$ whenever $\eta \colon \mathbf q \to \mathbf p$ is a surjective monotone map;
\item for every $q \in \N$ the map $(f_q,g_q) \colon M_q \oplus A[C_q] \to N_q$ is an isomorphism, where $g_q \colon A[C_q] \to N_q$ is the map induced by the inclusion $C_q \subseteq N_q$.
\end{enumerate}
\end{df}

\begin{rmk}
A free morphism $f \colon M \to N$ in $A \textrm{-} \Mod$ is always degreewise injective. In  fact, $M_q \to \oplus A[C_q]$ is injective, so that $f_q \colon M_q \to N_q$ is injective for each $q \in \N$.
\end{rmk}

\begin{prop} \label{prop cofibrations are injective}
A morphism $f \colon M \to N$ in $A \textrm{-} \Mod$ is a cofibration if and only if it is a retract of a free map. In particular, every cofibration in $A \textrm{-} \Mod$ is degreewise injective.
\end{prop}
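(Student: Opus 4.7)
The ``in particular'' clause of the proposition follows immediately from the main equivalence, since the preceding remark shows free maps are degreewise injective and the class of degreewise injective maps is clearly stable under retracts. For the main equivalence I would prove the two directions separately.

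For the easy direction (retract of a free map implies cofibration), it suffices to show that every free map is itself a cofibration, since cofibrations are closed under retracts in any model category. So let $f \colon M \to N$ be free with decomposition data $\{C_q \subseteq N_q\}_{q \in \N}$, and let $p \colon X \to Y$ be a trivial fibration of $A$-modules. My plan is to construct a lift in any commuting square with sides $M \to X$ and $N \to Y$ by induction on simplicial degree. At stage $q$, the lift is already defined on $M_q$ and on the subset of $N_q$ consisting of degenerations of elements of $\bigcup_{p<q} C_p$ (by the inductive hypothesis together with the stability condition $\eta^{\ast} C_p \subseteq C_q$); the only remaining task is to lift each element of $C_q$ itself, and this is possible because $p_q$ is a surjective map of $A_q$-modules with appropriately acyclic kernel. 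The free decomposition $N_q = M_q \oplus A[C_q]$ then allows one to extend $A_q$-linearly, producing the required lift.

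For the harder direction (cofibration implies retract of a free map), I would apply Quillen's small object argument to the set of generating cofibrations $I = \{A[\partial \Delta^n] \to A[\Delta^n]\}_{n \geq 0}$, obtained by applying the left adjoint $A[-]$ to the generating cofibrations of $\sset$. Given a cofibration $f \colon M \to N$, the SOA factors $f = q \circ j$ where $j \colon M \to P$ is a transfinite composition of pushouts of maps in $I$ and $q \colon P \to N$ is a trivial fibration. The key claim to verify is that any such relative $I$-cell complex $j$ is necessarily a free map: at each cell-attachment $M' \to M' \sqcup_{A[\partial \Delta^n]} A[\Delta^n]$ one takes as new generators the images of the non-degenerate simplices of $\Delta^n \smallsetminus \partial \Delta^n$, and one shows by transfinite induction that pushouts and filtered colimits of free maps of simplicial $A$-modules remain free, producing coherent subsets $C_q \subseteq P_q$ satisfying the required conditions. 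Once $j$ is known to be free, the standard retract argument applies: the lifting problem for $f$ against the trivial fibration $q$ produces a section $s \colon N \to P$ with $s \circ f = j$ and $q \circ s = \mathrm{id}_N$, exhibiting $f$ as a retract of the free map $j$.

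The main obstacle is the transfinite induction verifying that relative $I$-cell complexes are free. The delicate point is the simultaneous bookkeeping of the subsets $C_q$ through pushouts and filtered colimits, together with the check that the stability condition $\eta^{\ast} C_p \subseteq C_q$ is preserved at each step and that the summand $A[C_q]$ remains complementary to $M_q$ in the colimit; the remaining ingredients (the lifting argument in the first direction, the existence of the SOA factorization, and the retract argument) are essentially routine once this structural fact has been established.
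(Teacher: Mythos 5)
Your proposal follows essentially the same route as the paper's proof: establish that free maps are cofibrations, use the small object argument to factor a cofibration as a free map followed by a trivial fibration, and finish by the retract argument, exactly as the paper does (the paper merely cites Quillen for the first step and asserts, as you do, that the SOA produces a free left factor without spelling out the transfinite bookkeeping). One small point worth sharpening in your lifting sketch for the easy direction: the reason a generator $c \in C_q$ can be lifted is not surjectivity of $p_q$ per se, nor acyclicity of the kernel in isolation, but the right lifting property of the trivial fibration $p$ against $\partial\Delta^q \to \Delta^q$; this is what lets you choose a preimage of the image of $c$ whose faces agree with the lifts of the $d_i c$ already fixed at the previous inductive stage, and without this compatibility the resulting degreewise lift would not be a simplicial map.
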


\begin{proof}
See \cite[Remark 4, page II.4.11]{Quillen_Homotopical_1967} for a proof that every free map is a cofibration. The small object argument can be used to show that every map $f$ admits a factorization as $f = p i$, where $p$ is a trivial fibration and $i$ is a free map. It follows that if $f$ is a cofibration, then it is a retract of a free map. The second statement follows at once, since the retract of an injective map is still an injective map.
\end{proof}

\begin{cor} \label{cor omega sigma}
Let $A$ be a simplicial $k$-algebra. Then for any $A$-module $M$ there is a weak equivalence $\Omega \Sigma(M) \simeq M$.
\end{cor}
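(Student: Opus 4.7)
The plan is to build a Puppe-style fibre sequence $M \to CM \to \Sigma M$ with $CM$ contractible, read off $M \simeq \hofib(CM \to \Sigma M)$, and then identify this homotopy fibre with $\Omega\Sigma M$ by a weak-equivalence-invariance argument.

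First, I would replace $M$ by a cofibrant model and factor the canonical map $M \to 0$ as a cofibration $\iota \colon M \hookrightarrow CM$ followed by a trivial fibration $CM \xrightarrow{\sim} 0$, so that $CM$ is contractible. By Proposition \ref{prop cofibrations are injective}, $\iota$ is degreewise injective, which means the strict pointwise quotient $\Sigma M := CM/M$ coincides with the homotopy pushout of $0 \leftarrow M \hookrightarrow CM$, and is therefore a legitimate model for the suspension of $M$.

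Second, I would view the same diagram
\[
\xymatrix{M \ar[r]^\iota \ar[d] & CM \ar[d]^q \\ 0 \ar[r] & \Sigma M}
\]
also as a candidate homotopy pullback. It is a strict pullback, since $\ker(q) = M$ by construction. Every simplicial abelian group is a Kan complex, so every object of $A\textrm{-}\Mod$ is fibrant; and a standard fact about simplicial abelian groups is that any degreewise surjection of them is a Kan fibration, so $q$ is a fibration between fibrant objects. Corollary \ref{cor computing homotopy pullback} therefore promotes the strict pullback to a homotopy pullback.

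Finally, Proposition \ref{prop homotopy fibre} applied to this homotopy pullback gives $M \simeq \hofib(M \to 0) \simeq \hofib(q)$. To identify the latter with $\Omega\Sigma M = \hofib(0 \to \Sigma M)$, I would observe that the basepoint inclusion $0 \hookrightarrow CM$ is a weak equivalence (both vertices are contractible) and fits into a strictly commutative square over $\Sigma M$; the induced map on homotopy fibres is thus a weak equivalence, yielding $\hofib(q) \simeq \Omega\Sigma M$. Concatenating gives $M \simeq \Omega\Sigma M$. The main subtlety is the identification of the strict pullback with the homotopy pullback in the second step, which depends essentially on the module-category-specific facts that every object is Kan and that surjections of simplicial abelian groups are Kan fibrations.
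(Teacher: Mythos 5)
Your proof is correct and follows essentially the paper's own argument: replace $M$ by a cofibrant model, factor $M \to 0$ as a cofibration followed by a trivial fibration, observe that the strict quotient is a model for $\Sigma M$, note the quotient map is surjective (hence a fibration between fibrant objects), and apply Corollary \ref{cor computing homotopy pullback} to promote the strict pullback to a homotopy pullback. The only cosmetic difference is that you make explicit, via weak-equivalence-invariance of homotopy fibres, the identification of $\hofib(q)$ with $\Omega\Sigma M$ --- a step the paper leaves implicit when asserting that $\ker(p)$ models $\Omega(N')$ (and where, incidentally, the paper misprints $\ker(j)$ for $\ker(p)$ in the final display).
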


\begin{proof}
We have to show that if the square
\[
\xymatrix{
M \ar[r] \ar[d] & 0 \ar[d] \\ 0 \ar[r] & N
}
\]
is a homotopy pushout, then it is also a homotopy pullback. First of all, we can suppose without loss of generality that $M$ is a cofibrant object; next, we can replace the map $M \to 0$ with a cofibration $j \colon M \to D$ where $D$ is weakly equivalent to $0$. The dual of Corollary \ref{cor computing homotopy pullback} shows that the pushout
\[
\xymatrix{
M \ar[d] \ar[r]^j & D \ar[d]^p \\ 0 \ar[r] & N'
}
\]
is an explicit model for the suspension of $M$. In other words, we have
\[
\Sigma(M) \simeq N' := \mathrm{coker}(j)
\]
Now, $N'$ and $D$ are fibrant objects and $p \colon D \to N'$ is a surjective map, hence it is a fibration. It follows again from Corollary \ref{cor computing homotopy pullback} that $\ker(p)$ is an explicit model for $\Omega(N')$. Since Proposition \ref{prop cofibrations are injective} implies that $j$ is injective, we see that
\[
M \simeq \ker(j) \simeq \Omega \Sigma(M)
\]
completing the proof.
\end{proof}

\subsection{Derived derivations and cotangent complex}

Recall the following definition of derived derivation:

\begin{df}
Let $A$ be a simplicial $k$-algebra and let $B$ be an $A$-algebra. An $A$-derivation of $B$ with values in a $B$-module $M$ is a section of $B \oplus M \to B$, where $B \oplus M$ is defined by performing the classical square-zero extension degreewise.
\end{df}

\begin{rmk} \label{rmk bifunctor}
Fix two simplicial $k$-algebras $A$ and $C$. The previous definition gives rise to a bifunctor
\[
s \colon A / \salg / B \times B \textrm{-} \mathbf{Mod} \to A / \salg / B
\]
defined by
\[
s \colon (A \to C \to B,M) \mapsto C \oplus M
\]
where $M$ is thought as $C$-module by forgetting along the given map $C \to B$.

We have also another functor
\[
\pi \colon A / \salg / B \times B \textrm{-} \mathbf{Mod} \to A / \salg / B
\]
defined simply by
\[
\pi \colon (A \to C \to B, M) \mapsto A \to C \to B
\]
Finally, we have a natural transformation $p \colon s \to \pi$ which assigns to the pair $(C,M)$ in $A / \salg / B \times B \textrm{-} \Mod$ the natural projection
\[
C \oplus M \to C
\]
We will denote by abuse of notation this map $p_C$ (instead of $p_{C,M}$). These are easy checks left to the reader.
\end{rmk}

The set of $A$-derivations of $B$ into $M$ is naturally endowed with a $k$-module structure, which allows to define a functor
\[
\mathrm{Der}_A(B,-) \colon B \textrm{-} \Mod \to k \textrm{-} \Mod
\]
We can see this functor as the $\pi_0$ of another, much more interesting functor
\[
\mathscr Der_A(B,-) \colon B \textrm{-} \Mod \to A \textrm{-} \Mod
\]
defined by
\[
\mathscr Der_A(B,M) := \Map_{A / \salg / B}(B, B \oplus M)
\]

\begin{lem}
The functor $\mathscr Der_A(B,-)$ is representable by a simplicial $B$-module $\mathbb L_{B/A}$. In particular, it is a left Quillen functor.
\end{lem}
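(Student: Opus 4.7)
The plan is to realise $\mathbb L_{B/A}$ as the value on a cofibrant replacement of $B$ of a left Quillen functor that is left adjoint to the split square-zero extension construction. First I would introduce the functor
\[
s(B, -) \colon B\textrm{-}\Mod \to A/\salg/B, \qquad M \longmapsto (A \to B \oplus M \to B),
\]
where $B \oplus M$ carries the usual split square-zero multiplication, the structure map $A \to B \oplus M$ is the composite of the unit $A \to B$ with the inclusion, and the structure map $B \oplus M \to B$ is the canonical projection. Since fibrations and weak equivalences both in $B\textrm{-}\Mod$ and in $A/\salg/B$ are detected on underlying simplicial sets, and since the underlying simplicial set of $B \oplus M$ is just $B \times M$, it is immediate that $s(B, -)$ preserves fibrations and trivial fibrations; hence it is right Quillen.

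Next, both $A/\salg/B$ and $B\textrm{-}\Mod$ are combinatorial model categories, so the adjoint functor theorem produces a left Quillen left adjoint
\[
\Omega \colon A/\salg/B \to B\textrm{-}\Mod, \qquad \Omega \dashv s(B,-).
\]
At the level of underlying $1$-categories $\Omega$ coincides with the familiar construction $(A \to C \to B) \mapsto \Omega^1_{C/A} \otimes_C B$ of relative K\"ahler differentials base-changed to $B$, computed degreewise. I would then set
\[
\mathbb L_{B/A} := \Omega(QB),
\]
where $QB \twoheadrightarrow B$ is a cofibrant replacement in $A/\salg/B$ (when $B$ is already cofibrant over $A$, as in Definition \ref{inf}, one may take $QB = B$, recovering the familiar $\Omega^1_{B/A}$ at the underived level). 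The derived Quillen adjunction yields, for every $M \in B\textrm{-}\Mod$, a natural weak equivalence of mapping spaces
\[
\mathscr Der_A(B, M) = \Map_{A/\salg/B}(QB, s(B, M)) \simeq \Map_{B\textrm{-}\Mod}(\mathbb L_{B/A}, M),
\]
which is precisely the representability statement. The parenthetical ``in particular, it is a left Quillen functor'' I read as referring to $\Omega$: it expresses that the assignment $C \mapsto \mathbb L_{C/A}$ is obtained by left deriving a genuine Quillen functor, and is therefore homotopically well behaved.

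The main obstacle is identifying the left adjoint $\Omega$ produced abstractly by the adjoint functor theorem with the concrete formula $C \mapsto \Omega^1_{C/A} \otimes_C B$; once this is done, the left Quillen property of $\Omega$ is equivalent to the right Quillen property of $s(B,-)$, which is the cleaner check and reduces to the degreewise observation about split square-zero extensions made above. Everything else is then formal: one just feeds the adjunction into the standard machinery producing derived mapping spaces to get the required representability.
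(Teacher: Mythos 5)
Your argument reaches the same representing object $\Omega^1_{QB/A}\otimes_{QB} B$, but by a genuinely different route. The paper simply \emph{defines} $\mathbb L_{B/A}$ by this formula on a cofibrant replacement and refers to \cite[Chapter~I.1]{HAG-II} for the representability; you instead start from the easy observation that $s(B,-) = B\oplus(-)$ preserves (trivial) fibrations --- so it is right Quillen --- invoke the adjoint functor theorem for locally presentable categories to produce a left Quillen left adjoint $\Omega$, and then set $\mathbb L_{B/A} := \Omega(QB)$, getting the representability for free from the derived adjunction. This is cleaner conceptually: the Quillen adjunction is established on the side where the check is trivial, and representability is no longer a separate verification. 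What you buy in clarity you pay in the identification step: showing the abstractly produced $\Omega$ agrees with $(A\to C\to B)\mapsto \Omega^1_{C/A}\otimes_C B$ is precisely the classical adjunction of K\"ahler differentials with split square-zero extensions, applied degreewise, and you flag but do not carry out this step --- which is fair, since the paper equally defers the analogous content to a citation. One small caveat: your formula $\mathscr Der_A(B,M) = \Map_{A/\salg/B}(QB, s(B,M))$ silently replaces $B$ by $QB$ in the source; this agrees with the paper's definition only because $B$ is implicitly taken cofibrant over $A$ (as in Definition~\ref{inf}) or because $\Map$ is understood as the derived mapping space. Finally, you read ``it is a left Quillen functor'' as referring to the cotangent-complex functor $\Omega$, whereas the paper's proof actually argues that $\Map_B(\mathbb L_{B/A},-)=\mathscr Der_A(B,-)$ is \emph{right} Quillen with left adjoint $-\otimes_B\mathbb L_{B/A}$; either way the grammatical subject of the lemma's second sentence needs reinterpretation, and your reading is arguably the more defensible one.
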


\begin{proof}
Let $Q(B)$ be a cofibrant replacement for $B$ in the model category $A/\salg$. Define
\[
\mathbb L_{B/A} := \Omega^1_{Q(B)/A} \otimes_{Q(B)} B
\]
where the construction of $\Omega^1_{Q(B)/A}$ is meant to be performed degreewise. It can be checked that $\mathbb L_{B/A}$ is the desired representative (see for example \cite[Chapter I.1]{HAG-II}).

The second part of the statement follows from the fact that $\Map_B(\mathbb L_{B/A},-)$ is right adjoint to $-\otimes_B \mathbb L_{B/A}$ and the fact that $\Map_B(\mathbb L_{B/A},-)$ respects fibrations and trivial fibrations (since it is defined as the internal hom of $\sset$).
\end{proof}

\begin{lem} \label{lemma naturality square zero}
Let $f \colon A \to B$ be a morphism of $C$-algebras and let $g \colon M \to N$ be a morphism of $B$-module. Any commutative triangle of $A$-modules
\[
\xymatrix{
\mathbb L_{A/C} \ar[d]_{\mathbb L(f)} \ar[r]^u & M \ar[d]^g \\ \mathbb L_{B/C} \ar[r]_v & N
}
\]
gives rise to a commutative diagram of $C$-algebras as follows:
\[
\xymatrix{
A \ar[r]^-{d_u} \ar[d]_f & A \oplus M \ar[d]^{s(f,g)} \\ B \ar[r]^-{d_v} & B \oplus N
}
\]
where $s(f, g)$ denotes the bifunctor of Remark \ref{rmk bifunctor} and $d_u$, $d_v$ are the $C$-derivation induced by the universal property of the cotangent complexes $\mathbb L_{A/C}$ and $\mathbb L_{B/C}$.
\end{lem}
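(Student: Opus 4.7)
The plan is to first construct the derivations $d_u$ and $d_v$ using the representability of the derivation functor, and then check the commutativity of the resulting square by translating it back into a statement about maps out of the cotangent complex, where it will follow immediately from the hypothesis.

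For the construction, recall that $\mathbb L_{A/C}$ represents $\mathscr{Der}_C(A,-)$, so by adjunction (or by directly unwinding the definition) a map of $A$-modules $u \colon \mathbb L_{A/C} \to M$ corresponds to a section $d_u \colon A \to A \oplus M$ of the canonical projection $p_A \colon A \oplus M \to A$ in $C / \salg / A$; here $M$ is regarded as an $A$-module along $f \colon A \to B$. In the same way, $v \colon \mathbb L_{B/C} \to N$ produces $d_v \colon B \to B \oplus N$ in $C/\salg/B$. These are the two horizontal arrows of the square we want to construct, and the vertical arrow $s(f,g)$ is the evident natural transformation of Remark \ref{rmk bifunctor}, which on underlying simplicial sets sends $(a,m) \mapsto (f(a), g(m))$; it is a morphism of $C$-algebras because $f$ is a map of $C$-algebras and $g$ is $B$-linear (hence, restricting scalars, $A$-linear).

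For the commutativity, the key observation is that both composites $s(f,g) \circ d_u$ and $d_v \circ f$ are morphisms of $C$-algebras $A \to B \oplus N$ that factor through $B$ via the canonical projection $B \oplus N \to B$; that is, they are morphisms in $C/\salg/B$, where $B \oplus N$ is viewed as an object over $B$ via the projection and $A$ is viewed as an object over $B$ via $f$. By the universal property of $\mathbb L_{A/C}$, such morphisms are in bijection with $A$-module maps $\mathbb L_{A/C} \to N$, where $N$ is regarded as an $A$-module via $f$. It then suffices to identify the two resulting classifying maps: $d_v \circ f$ corresponds to $\mathbb L_{A/C} \xrightarrow{\mathbb L(f)} \mathbb L_{B/C} \xrightarrow{v} N$ by naturality of the cotangent complex in the source algebra, while $s(f,g) \circ d_u$ corresponds to $\mathbb L_{A/C} \xrightarrow{u} M \xrightarrow{g} N$ by naturality of the square-zero extension construction in the module variable. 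The commutativity of the given triangle $g \circ u = v \circ \mathbb L(f)$ therefore implies $s(f,g) \circ d_u = d_v \circ f$ in $\mathrm{Ho}(C/\salg/B)$, which is exactly the assertion of the lemma.

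The only real content is the naturality statement that the bijection between derivations and maps out of the cotangent complex is natural both in the source algebra (giving the identification of $d_v \circ f$) and in the target module (giving the identification of $s(f,g) \circ d_u$). Neither is an obstacle, as both follow directly from the way $\mathbb L_{-/C}$ is defined, but writing the correspondences down carefully and keeping track of which module structures are at play is the main piece of bookkeeping.
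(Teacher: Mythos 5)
Your proof is correct and takes a genuinely different — and, frankly, more transparent — route than the paper's. The paper argues by direct manipulation of sections: it records $p_A \circ d_u = \mathrm{id}_A$ and the naturality $f \circ p_A = p_B \circ s(f,g)$, then claims that since $p_A$ is an epimorphism it suffices to check $s(f,g) \circ d_u \circ p_A = d_v \circ f \circ p_A$, and asserts that this ``follows from the already stated properties.'' As written that last step is a gap: precomposing with the epimorphism $p_A$ does not simplify the desired identity (unwinding, it reduces to $g \circ u' = v' \circ f$ on the derivation components, which is exactly the content of the commutative triangle in the hypothesis), and the proof never explicitly invokes that hypothesis. Your argument instead treats both composites $s(f,g)\circ d_u$ and $d_v\circ f$ as elements of $\Map_{C/\salg/B}(A, B\oplus N)$, translates them via the universal property of $\mathbb L_{A/C}$ into $A$-module maps $\mathbb L_{A/C}\to N$, identifies these as $g\circ u$ and $v\circ \mathbb L(f)$ by naturality in the target module and in the source algebra respectively, and concludes from $g\circ u = v\circ\mathbb L(f)$. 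This makes the role of the hypothesis completely explicit and the logic airtight. The one caveat worth keeping in mind is that the universal property of $\mathbb L_{A/C}$ in the paper's setup is only a weak equivalence of mapping spaces, not a strict bijection, so the conclusion you get by this route is commutativity in $\mathrm{Ho}(C/\salg/B)$ rather than strict commutativity; you flagged this, and it is the level of precision actually used later in the proof of Theorem \ref{thm main estimate}. If one wanted the strict statement, one would instead carry out the element-level computation the paper gestures at, explicitly using $g\circ u' = v'\circ f$ on the derivation components.
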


\begin{proof}
Using the notations of Remark \ref{rmk bifunctor}, we see that $d_u$ is a section of $p_A$, and moreover naturality yields
\[
f \circ p_A = p_B \circ s(f,g)
\]
Since $p_A$ is an epimorphism, we conclude that the equality
\[
s(f,g) \circ d_u = d_v \circ f
\]
holds if and only if
\[
s(f,g) \circ d_u \circ p_A = d_v \circ f \circ p_A
\]
and now this follows from the already stated properties.
\end{proof}

\bibliographystyle{plain}
\bibliography{dahema}

\end{document}